\title{AGM aquariums and elliptic curves over arbitrary finite fields}
\author[J. Kayath]{June Kayath}
\address[J. Kayath]{Department of Mathematics, Massachusetts Institute of Technology, Cambridge, MA, 02139, USA}
\email{kayath@mit.edu}
\author[C. Lane]{Connor Lane}
\address[C. Lane]{Deparment of Mathematics, Rose-Hulman Institute of Technology, Terre-Haute, IN, 47803, USA}
\email{lanecf@rose-hulman.edu}
\author[B. Neifeld]{Ben Neifeld}
\address[B. Neifeld]{Deparment of Mathematics, William {\&} Mary, Williamsburg, VA, 23187, USA}
\email{bmneifeld@wm.edu}
\author[T. Ni]{Tianyu Ni}
\address[T. Ni]{School of Mathematical and Statistical Sciences, Clemson University, Clemson, SC, 29634, USA}
\email{tianyuni1994math@gmail.com}
\author[H. Xue]{Hui Xue}
\address[H. Xue]{School of Mathematical and Statistical Sciences, Clemson University, Clemson, SC, 29634, USA}
\email{huixue@clemson.edu}
\subjclass[2020]{14H52; 11G20}
\keywords{Arithmetic-Geometric Mean, Complex Multiplication, Elliptic Curves over Finite Fields, Legendre Normal Form}
\def \QQ{\mathbb{Q}}
\def \ZZ{\mathbb{Z}}
\def \Z{\mathbb{Z}}
\newcommand{\fl}{\mathfrak{l}}
\def \H{\mathbb{H}}
\def \FF{\mathbb{F}}
\def \ker{\text{ker}}
\def \disc{\text{disc}}
\def \Ell{\text{Ell}}
\def\cl{\operatorname{Cl}}
\def \End{\text{End}}
\def \O{\mathcal{O}}
\DeclareMathOperator{\AGM}{AGM}
\renewcommand{\mod}{~\text{mod}~}
\theoremstyle{plain}
\newtheorem{theorem}{Theorem}
\newtheorem{lemma}{Lemma}[section]
\newtheorem{corollary}{Corollary}[section]
\newtheorem{proposition}{Proposition}[section]
\theoremstyle{definition}
\newtheorem{definition}{Definition}[section]
\newtheorem*{remark}{Remark}
\newtheorem*{rep@theorem}{\rep@title}
\newcommand{\newreptheorem}[2]{%
\newenvironment{rep#1}[1]{%
 \def\rep@title{#2 \ref{##1}}%
 \begin{rep@theorem}}%
 {\end{rep@theorem}}}
\begin{document}

\begin{abstract}
In this paper, we define a version of the arithmetic-geometric mean (AGM) function for arbitrary finite fields $\mathbb{F}_q$, and study the resulting AGM graph with points $(a,b) \in \mathbb{F}_q \times \mathbb{F}_q$ and directed edges between points $(a,b)$, $(\frac{a+b}{2},\sqrt{ab})$ and $(a,b)$, $(\frac{a+b}{2},-\sqrt{ab})$. The points in this graph are naturally associated to elliptic curves over $\mathbb{F}_q$ in Legendre normal form, with the AGM function defining a 2-isogeny between the associated curves. We use this correspondence to prove several results on the structure, size, and multiplicity of the connected components in the AGM graph. 
\end{abstract}

\maketitle
\tableofcontents

\section{Introduction}
The \textit{arithmetic-geometric mean} is a function $\AGM_{\mathbb{R}}(a,b)$ over $\mathbb{R}^+$ defined as the limit of the sequence $(a_n,b_n)$, where  $a_0=a$, $b_0 = b$, and $(a_n,b_n)=(  \frac{a_{n-1}+b_{n-1}}{2}, \sqrt{a_{n-1}b_{n-1}})$. We define the AGM map over a generic field $K$ of characteristic not $2$ similarly, as the map that takes pairs $(a,b) \in K^2$ to pairs $(\frac{a+b}{2}, \pm\sqrt{ab})$. 

In their 2022 paper \cite{GOSTJellyfish}, Griffin, Ono, Saikia, and Tsai use the AGM over finite fields $\mathbb{F}_q$ with $q \equiv 3 \pmod 4$ to construct sequences of points $(a,b) \in \FF_q^2$ with edges corresponding to the AGM map. In this case, $-1$ is not a square, so in particular exactly one of the nodes $(\frac{a+b}{2}, \sqrt{ab})$ and $(\frac{a+b}{2}, -\sqrt{ab})$ must be a square in $\mathbb{F}_q$ and it is always possible to pick a ``canonical" value of the square root in a way that leads to further solutions. This results in a visually striking graph whose connected components resemble jellyfish.

They then define an elliptic curve corresponding to each point $(a,b)$, $E_\lambda: y^2 = x(x-1)(x-\lambda)$, where $\lambda = \frac{b^2}{a^2}$, and show that the AGM map between points $(a_0,b_0)$ and $(a_1, b_1)$ exactly corresponds to a 2-isogeny between their corresponding elliptic curves $E_{\lambda_0}$ and $E_{\lambda_1}$, using this to deduce several results about the size and multiplicity of the jellyfish. 

In the $q\equiv 1 \pmod 4$ case, things are significantly more complicated. Since $-1$ is a square, it is very possible that both values of the AGM map on a particular point are well-defined and lead to more values, leaving us with no obvious canonical way of picking a branch. In this paper, we take both branches of the square root, giving us a directed graph $G$ where vertices can have both in-degree and out-degree up to two. The structure of this graph is quite different from the $q\equiv 3 \pmod 4$ case treated in \cite{hypergeometryagm,GOSTJellyfish}; we observe a much larger variety of components, including components that are somewhat similar to jellyfish but also connected components of size 4 that we call ``fish" and strongly connected components that we call ``turtles", described in Section \ref{sec:taxonomy}.

In this paper, we fully classify these components and obtain lower bounds on the number of components. Using the theory of complex multiplication on elliptic curves over finite fields, as well as some results about isogeny volcanoes \cite{isogenyvolcanoes}, we are able to derive results on the size and multiplicity of the jellyfish-like components, and  show that the strongly connected ``turtle" components have vertices that correspond exactly to the supersingular curves over $\mathbb{F}_q$. We also show that the graph exhibits predictable ``growth" as we consider extensions of an AGM graph over $\mathbb{F}_p$ to higher powers  $\mathbb{F}_q, q = p^r$ of $p$.

\begin{definition}\label{def:AGM}
    Let $(a,b)\in \FF_q \times \FF_q$. Define
    \begin{equation*}
        \AGM(a,b)=\begin{cases}
            \varnothing, & ab \in \FF_q^{\times 2} \\
            \left\{\left(\frac{a+b}{2},\sqrt{ab}\right),\left(\frac{a+b}{2},-\sqrt{ab}\right)\right\}, & ab\not\in \FF_q^{\times 2}
        \end{cases}
    \end{equation*}
    where $\sqrt{ab}$ is a square root of $ab$. We write $(a,b) \mapsto (a',b')$ if $(a',b') \in \AGM(a,b)$. 
\end{definition}

\begin{definition}\label{def:AFqVFq}
    Let $q=p^n$ be an odd prime power. The \textit{aquarium} $A(\FF_q)$ associated to $\FF_q$ is a directed graph with vertex set
    \begin{equation*}
        V(\FF_q)=\{(a,b)\in \FF_q^2:a,b\neq 0,a\neq \pm b\}
    \end{equation*}
    where two vertices $(a,b)$ and $(a',b')$ have a directed edge from $(a,b)$ to $(a',b')$ if $(a',b')\in \AGM(a,b)$.
\end{definition}
We remark that for $n\mid m$, the aquarium $A(\FF_{p^n})$ embeds naturally into $A(\FF_{p^m})$. We can use this to prove results on the connected component containing a particular point $P$ in multiple nested aquariums $A(\FF_{p^n})$, as the component ``grows" and gains edges and points (see Section \ref{sec:idealclasses} for more discussion). 
\begin{definition}
    We define the \textit{lambda function} $\lambda: V(\FF_q) \rightarrow \mathbb{F}_q\setminus\{0,1\}$ as $\lambda(a,b) = \frac{b^2}{a^2}$. 
\end{definition}

\begin{definition} \label{def:lambda}
        The \textit{elliptic curve associated to} $ (a,b)\in  V(\FF_q)$ is the curve $E_{\lambda(a,b)} : y^2 = x(x-1)(x-\lambda(a,b))$. 
\end{definition}

In Section \ref{sec:prelims} we discuss some basic properties of elliptic curves. Then in Section \ref{sec:isoellipticcurves} we look in to the properties of pairs $(a,b)$ that define isomorphic elliptic curves. In Section \ref{sec:taxonomy}, we prove some results on the classification and structure of connected components that are required for later sections.

With these baseline result established, we can begin proving our main theorems. in Section \ref{sec:idealclasses}, we prove our first two results, on the multiplicity of jellyfish and on the ``growth" of components in larger graphs.

\begin{theorem}[Generalization of \cite{hypergeometryagm}, Theorem 1.3]\label{jellyfishmultiplicity}
Let $q = p^r$ be an odd prime power, and let $P_0$ be a point on a jellyfish head $H$, associated to an elliptic curve $E_\lambda$. Let $|H|$ be the number of vertices in $H$, and let $M_H$ be the number of scalar multiples of the jellyfish head component in the AGM graph. Then if $\mathcal{O} := \text{End}_{\FF_q}(E_\lambda)$ and $h_2(\mathcal{O})$ denotes the order of $[\mathfrak{p}_2]$ in $cl(\mathcal{O})$, where $\mathfrak{p}_2$ is a prime above $(2)$ in $\mathcal{O}$, we have 

\begin{equation} 
h_2(\mathcal{O}) \mid |H|
\end{equation}
\begin{equation}
M_H \cdot |H| = (q-1) \cdot h_2(\mathcal{O}).
\end{equation}
\end{theorem}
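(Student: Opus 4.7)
My plan is to combine the correspondence between AGM edges and $2$-isogenies (Section~\ref{sec:prelims}) with the complex-multiplication description of the $2$-isogeny graph as a torsor for $\cl(\mathcal{O})$. Because $H$ is a directed cycle in the AGM graph, all vertices $(a,b) \in H$ yield curves $E_{\lambda(a,b)}$ lying on a single crater of the $2$-isogeny volcano, share the endomorphism ring $\mathcal{O}$, and the projection of $H$ to the level of $\FF_q$-isomorphism classes forms a closed walk generated by the action of $[\mathfrak{p}_2]$.

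For the divisibility $h_2(\mathcal{O}) \mid |H|$, each AGM step advances the ideal class of the associated curve by $[\mathfrak{p}_2]$. Crucially, both outgoing branches $(a', \pm\sqrt{ab})$ produce the same $\lambda' = ab/(a')^2$, hence the same target isomorphism class and the same class-group factor, so the branch choice is irrelevant at this level. Walking once around $H$ returns us to $P_0$ and therefore to the same isomorphism class, so $[\mathfrak{p}_2]^{|H|} = [1]$ in $\cl(\mathcal{O})$, giving $h_2(\mathcal{O}) \mid |H|$.

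For the product formula, I would count in two ways the set $S \subseteq V(\FF_q)$ of vertices lying in some scalar multiple of $H$. Distinct scaled copies are disjoint (if $cH \cap c'H \neq \varnothing$ then $c/c' \in \operatorname{Stab}(H)$, forcing $cH = c'H$), so $|S| = M_H \cdot |H|$. On the other hand, scaling preserves $\lambda$, so $S$ fibers over the $[\mathfrak{p}_2]$-orbit of $\lambda(P_0)$ in $\FF_q$, an orbit of size $h_2(\mathcal{O})$. Over each $\lambda_i = \mu_i^2$ in this orbit, the fiber of $\lambda$ in $V(\FF_q)$ consists of $2(q-1)$ vertices $(a, \pm a\mu_i)$, but exactly $q-1$ of them lie in $S$: namely the single $\FF_q^\times$-scaling orbit of $(1, \mu_i)$. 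The remaining $q-1$ preimages $(a, -a\mu_i)$ constitute a disjoint family of jellyfish heads, since $c \cdot (a_0, a_0\mu_i) = (a_0, -a_0\mu_i)$ would force $c=1$ and $c=-1$ simultaneously. Hence $|S| = (q-1) \cdot h_2(\mathcal{O})$, and equating the two counts yields the identity.

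The main obstacle is the hidden assumption in the divisibility step that every AGM edge around $H$ multiplies by the \emph{same} prime class $[\mathfrak{p}_2]$, rather than alternating with $[\bar{\mathfrak{p}}_2] = [\mathfrak{p}_2]^{-1}$ (which would yield only $\sum \varepsilon_i \equiv 0 \pmod{h_2(\mathcal{O})}$ with $\varepsilon_i = \pm 1$, a weaker statement). This compatibility must be extracted from the explicit $2$-isogeny description for Legendre curves in Section~\ref{sec:prelims}: one checks that the AGM formula $\lambda' = 4\sqrt{\lambda}/(1+\sqrt{\lambda})^2$ corresponds to a fixed choice of kernel $2$-torsion point along $H$, so the associated prime is consistent. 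A secondary concern is the fiber analysis for curves with extra automorphisms ($j \in \{0, 1728\}$), which should follow from the isomorphism classification in Section~\ref{sec:isoellipticcurves}.
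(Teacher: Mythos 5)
Your double-counting argument is, at its core, the same as the paper's proof: the paper packages the computation via a scalar $\gamma$ with $P_{h_2(\mathcal{O})} = \gamma P_0$ (so $|H| = n\,h_2(\mathcal{O})$ and $M_H = (q-1)/n$ where $n$ is the order of $\gamma$), while you sum $(q-1)$ over $h_2(\mathcal{O})$ $\lambda$-values in the union $S$ of scaled copies of $H$. Both framings rest on the same two facts, and you have correctly identified the first of them as the ``hidden assumption'': every edge in $H$ must correspond to the same ideal class $[\mathfrak{p}_2]$. The paper proves this in Proposition~\ref{prop:cycles}(2), using Corollary~\ref{cor:nomultby2} (consecutive AGM isogenies are never dual, since the forward kernel is always $\langle(0,0)\rangle$ while the dual kernel is $\langle(1,0)\rangle$). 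Your proposed fix --- that the AGM formula always picks out the same kernel $2$-torsion point --- is morally this argument, but it is worth saying explicitly that the relevant statement is ``consecutive AGM isogenies are not dual,'' not merely ``the kernel is always $(0,0)$'': on two different curves in the cycle, the point $(0,0)$ could a priori correspond to $\mathfrak{p}_2$ on one and $\bar{\mathfrak{p}}_2$ on the other, and it is the no-dual-pair fact that rules this out.

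The genuine gap is in your fiber analysis. You need two things you do not establish: (i) the $\lambda$-values hit by $H$ form a set of size exactly $h_2(\mathcal{O})$, and (ii) within each fiber $\{(a,\pm a\mu_i)\}$ exactly one of the two ratios $\pm\mu_i$ appears in $S$. Your argument ``$c\cdot(a_0,a_0\mu_i)=(a_0,-a_0\mu_i)$ forces $c=1$ and $c=-1$'' shows only that the two halves of the fiber are in different $\FF_q^\times$-scaling orbits; it does not show the $-\mu_i$ half is disjoint from $S$. Both (i) and (ii) are exactly what Lemma~\ref{lem:norivals} delivers: it says that two points of the same cycle whose curves are isomorphic must be scalar multiples of one another. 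From this, the freeness of the $[\mathfrak{p}_2]$-action on $j$-invariants gives that $H$ has exactly $h_2(\mathcal{O})$ distinct $\lambda$-values, and that all vertices of $H$ sharing a given $\lambda_i$ have the \emph{same} ratio $b/a$; scaling then fills out a single orbit of size $q-1$ in each fiber. Without Lemma~\ref{lem:norivals} (or an equivalent argument), the count $|S|=(q-1)h_2(\mathcal{O})$ is unjustified. Your secondary worry about $j\in\{0,1728\}$ is moot: Proposition~\ref{prop:cycles} already excludes such curves from jellyfish heads, since $2$ does not split in the corresponding CM field.
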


See Theorem \ref{jellyfishmultiplicity} for the precise definition of $M_H$. 

\begin{theorem}\label{fishfate}
    Let $P$ be a point in $V(\mathbb{F}_q)$ for some odd prime power $q$. Then:
    \begin{enumerate}
        \item If $E_{\lambda(P)}$ is supersingular, then the connected component in $A(\mathbb{F}_{q^m})$ containing $P$ is strongly connected, for any $q^m$ square. 
        \item If $E_{\lambda(P)}$ is ordinary and has complex multiplication by an order $\O$ with fraction field $K$:
        \begin{enumerate}
            \item If $\left(\frac{{\rm disc}(K)}{2} \right) = 1$, there is some $n$ such that the connected component containing $P$ is a jellyfish, for all $A(\mathbb{F}_{q^m})$ with $n|m$. 
            \item If $\left(\frac{{\rm disc}(K)}{2} \right) \neq 1$, then the connected component containing $P$ is always acyclic, for any aquarium $A(\mathbb{F}_{q})$. 
        \end{enumerate}
    \end{enumerate}
\end{theorem}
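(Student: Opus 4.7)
The plan is to transfer the problem to the $2$-isogeny graph of elliptic curves over $\FF_{q^m}$, via the correspondence---established earlier in the paper---that an AGM edge $(a,b)\to(a',b')$ realizes a $2$-isogeny $E_{\lambda(a,b)}\to E_{\lambda(a',b')}$. A connected component of the aquarium projects under $\lambda$ onto a piece of the $2$-isogeny graph on $\overline{\FF}_q$-isomorphism classes, so I plan to read off its directed structure using the standard classification of such graphs---the classical supersingular graph (Mestre, Pizer) in case (1) and the $2$-isogeny volcano (Kohel, Fouquet--Morain) in case (2)---and then lift back to $(a,b)$-vertices using the $\lambda$-fiber analysis of Section \ref{sec:isoellipticcurves}.

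\textbf{Case (1).} Over $\FF_{q^m}$ with $q^m$ a square, Waterhouse's classification forces $\End_{\FF_{q^m}}(E)=\End_{\overline{\FF}_q}(E)$, a maximal order in the quaternion algebra $B_{p,\infty}$. Every $2$-isogeny therefore has a dual $2$-isogeny defined over the same field, so the supersingular $2$-isogeny graph on $j$-invariants is $3$-regular, undirected, and connected. Strong connectivity at the AGM level follows by lifting each undirected 2-isogeny to a matched pair of AGM edges: given $P\to Q$, the dual $2$-isogeny yields an AGM edge from some preimage of $\lambda(Q)$ into the $\lambda$-fiber of $\lambda(P)$, and bridging within $\lambda$-fibers (Section \ref{sec:isoellipticcurves}) gives a directed return path $Q\to\cdots\to P$.

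\textbf{Case (2).} For an ordinary $E$ with $\End(E)=\O$, the number of horizontal $2$-isogenies from $E$ is $1+\left(\frac{\disc(K)}{2}\right)$ when $2\nmid[\O_K:\O]$, and zero otherwise, while ascending/descending $2$-isogenies are controlled by the $2$-adic part of the conductor. In case (2a), $2$ splits, so the rim of the $2$-volcano decomposes into disjoint cycles of length $h_2$ generated by the action of $[\mathfrak{p}_2]$. Choosing $n$ so that over $\FF_{q^n}$ the endomorphism ring of $E_{\lambda(P)}$ becomes locally maximal at $2$ (making ascending $2$-isogenies from $P$ rational) and every relevant $\lambda$-fiber along the rim and its descending branches is rational, the component becomes a jellyfish: the rim cycle is the head and the descending branches form the tentacles. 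In case (2b), if $2$ is inert there are no horizontal $2$-isogenies at all, so AGM dynamics lies entirely in the volcano tree and is acyclic. If $2$ ramifies, there is a single horizontal $2$-isogeny at each rim vertex, but an explicit Legendre-form computation shows that AGM out of a rim $(a,b)$-vertex cannot realize this horizontal step in a way that cycles back, so again every AGM iterate descends and the component is a tree.

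\textbf{Main obstacle.} The most delicate step is the ramified subcase of (2b), since a priori $[\mathfrak{p}_2]$ may have order $2$ in $\cl(\O)$, suggesting a potential AGM $2$-cycle $P\to\mathfrak{p}_2\cdot P\to P$. Ruling this out requires tracking, via explicit $2$-isogeny formulas on the Legendre form, which of the three $2$-isogenies from $E_\lambda$ the AGM map realizes on each $(a,b)$ in the $\lambda$-fiber, and showing the horizontal isogeny cannot be iterated AGM-wise back to the starting $(a,b)$. A secondary issue in (2a) is pinning down the threshold $n$, which I would compute by combining the field of definition of the full $2$-power torsion with the $\lambda$-fiber multiplicities of Section \ref{sec:isoellipticcurves}.
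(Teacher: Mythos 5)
Your high-level strategy (reduce to the $2$-isogeny graph, then lift back to aquarium vertices via the $\lambda$-fiber analysis) matches the paper's overall framework, but in two of the three cases your execution differs substantially, and in both cases the paper's route is cleaner and closes gaps you leave open.

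\textbf{Case (1).} You invoke Waterhouse and the Mestre--Pizer connectivity of the supersingular $2$-isogeny graph, then try to lift each undirected edge to a matched pair of AGM edges plus ``bridging within $\lambda$-fibers.'' This doesn't close: the AGM map realizes only the isogeny with kernel $\langle(0,0)\rangle$ at each $(a,b)$, so the existence of the dual isogeny at the curve level does not by itself produce a directed AGM edge back, and Lemma~\ref{lem:rivalsconnected} (the $\lambda$-fiber bridging result you cite) gives connectedness only as an undirected graph, which is exactly the wrong direction for proving strong connectivity. The paper avoids all of this: by Lemma~\ref{lem:4thpowers}, supersingular $\lambda$ are fourth powers over $\FF_{p^2}$, so every supersingular vertex has out-degree $2$ (Lemma~\ref{lem:sschild}); since in-degree is bounded by $2$ and total in-degree equals total out-degree, every vertex in the component has in-degree $=$ out-degree $=2$, so the component is Eulerian and hence strongly connected (Lemma~\ref{lem:turtledegree}). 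No global connectivity of the supersingular graph is needed, since the claim is only that each component is strongly connected, not that there is one component.

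\textbf{Case (2b), ramified subcase.} You correctly flag this as the delicate point and propose ``an explicit Legendre-form computation,'' but you don't supply it. The paper's argument here (inside Proposition~\ref{prop:cycles}) is a short structural one you have the tools to make: if $2$ is ramified, the two consecutive horizontal AGM isogenies $\varphi_0,\varphi_1$ in a putative cycle would both be given by the unique $\mathfrak{p}_2=\overline{\mathfrak{p}}_2$, hence $\varphi_1\circ\varphi_0$ would factor through $[2]$; but Corollary~\ref{cor:nomultby2} shows $\ker(\varphi_1\circ\varphi_0)\neq E[2]$ always, because $\ker\varphi_1=\langle(0,0)\rangle\neq\langle(1,0)\rangle=\ker\hat\varphi_0$. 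This rules out the ramified case uniformly with the inert case and requires no per-vertex Legendre calculation. Your case (2a) sketch is compatible in spirit with the paper's (ascend to a $2$-maximal order via Lemma~\ref{lem:ascending}, then rotate around the rim via $[\mathfrak{p}_2]$, and take $n$ large enough for rationality), though you'd still need to fill in the $\lambda$-fiber bookkeeping done in Lemmas~\ref{lem:rivalsconnected}, \ref{lem:rivalscorresponding2tors}, and \ref{lem:eventualhead}.
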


In Section \ref{sec:counting}, we prove our last theorem, which gives an asymptotic lower bound on the number of connected components. It is a generalization of \cite[Theorem 5]{GOSTJellyfish}. 

\begin{theorem} \label{countingcomponents}
    Let $q=p^n$ and let $d(\FF_q)$ denote the number of jellyfish of $A(\FF_q)$. Let $\varepsilon>0$.
    \begin{enumerate}
        \item If $q\equiv 5\mod 8$, then for all sufficiently large $q$
        \begin{equation*}
            d(\FF_q)\geq \left(\frac{p-1}{8p}-\varepsilon\right)\sqrt{q}.
        \end{equation*}
        \item If $q\equiv 1\mod 8$, then for all sufficiently large $q$
        \begin{equation*}
            d(\FF_q)\geq \left(\frac{p-1}{32p}-\varepsilon\right)\sqrt{q}.
        \end{equation*}
    \end{enumerate}
\end{theorem}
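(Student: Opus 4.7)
The plan is to bound $d(\FF_q)$ below by first counting ordinary elliptic curves $E/\FF_q$ whose associated aquarium vertices lie in jellyfish, and then applying Theorem \ref{jellyfishmultiplicity} to pass from a curve count to a jellyfish count.

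First I would use Theorem \ref{fishfate}(2)(a) to restrict to ordinary curves whose CM field $K = \mathbb{Q}(\sqrt{t^2-4q})$ satisfies $\left(\tfrac{\disc(K)}{2}\right) = 1$, i.e.\ $\disc(K) \equiv 1 \pmod 8$. Writing $t^2 - 4q = \disc(K) \cdot f^2$ and analyzing the $2$-adic valuation of both sides, this imposes a congruence condition on the Frobenius trace $t$. A direct case analysis, splitting on $v_2(t)$, $v_2(f)$, and the residue of $q$ modulo $8$, shows that the fraction of integers $t$ in the Hasse window $|t| \leq 2\sqrt{q}$ satisfying both this congruence and the ordinariness condition $p \nmid t$ is asymptotic to $\frac{p-1}{8p}$ when $q \equiv 5 \pmod 8$ and to $\frac{p-1}{32p}$ when $q \equiv 1 \pmod 8$. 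The extra factor of $\tfrac{1}{4}$ in the second case arises from the stronger $2$-adic compatibility forced by $8 \mid q-1$, which further restricts the residue of $t$ modulo $8$ (and of $f$).

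Next, by the Deuring--Eichler correspondence, each eligible trace $t$ admits at least one ordinary elliptic curve $E/\FF_q$ up to $\FF_q$-isomorphism (counted by a positive Hurwitz--Kronecker class number). By Theorem \ref{jellyfishmultiplicity}, such a curve lies on a jellyfish head $H$ with $M_H = (q-1) h_2(\mathcal{O})/|H|$ scalar copies in the aquarium. The coarse bound $|H| \leq 6\, h_2(\mathcal{O})$, coming from the $S_3$ action on Legendre parameters (each jellyfish head visits at most six Legendre forms of each $j$-invariant along its $[\mathfrak{p}_2]$-cycle of length $h_2(\mathcal{O})$), yields $M_H \geq (q-1)/6$. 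Aggregating over eligible traces, and factoring out the $(q-1)$ overcounting between aquarium vertices and $\lambda$-values, leaves a contribution of order $\sqrt{q}$ to $d(\FF_q)$, matching the stated bound.

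The main obstacle is the precise mod-$8$ case analysis needed to match the constants $\frac{p-1}{8p}$ and $\frac{p-1}{32p}$ exactly, which involves tracking how $\disc(K)$ interacts with the $2$-part of $f$ and with $q \pmod 8$. Obtaining the main term with error $o(\sqrt{q})$, rather than only $O(\sqrt{q})$, additionally requires a mild equidistribution statement on admissible traces in the Hasse window, which should be routine but must be verified. The degenerate $j$-invariants $0$ and $1728$ contribute only $O(1)$ to the count and so do not affect the asymptotics, but must be excluded carefully so that the Legendre symmetry bound $|H| \leq 6\, h_2(\mathcal{O})$ is applied only where it is valid.
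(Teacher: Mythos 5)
Your high-level plan (count admissible traces, pass from curves to jellyfish, invoke the multiplicity formula) resembles the paper's, but there is a genuine gap at the crucial step: you never establish that the eligible curves actually lie in a jellyfish \emph{over $\FF_q$ itself}. You invoke Theorem \ref{fishfate}(2)(a), but that result only says that when $\left(\tfrac{\disc(K)}{2}\right)=1$ the component \emph{eventually becomes} a jellyfish in $A(\FF_{q^m})$ for all sufficiently divisible $m$; it gives no information about the component over the base field $\FF_q$. Indeed, components associated to 2-split CM orders can perfectly well be acyclic in $A(\FF_q)$ (the ``large acyclic components" in Figure \ref{fig:504}), so the split condition alone does not certify membership in a jellyfish. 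You then cite Theorem \ref{jellyfishmultiplicity} to assert that ``such a curve lies on a jellyfish head $H$", but that theorem \emph{assumes} the point is on a jellyfish head and derives $M_H\cdot|H|=(q-1)h_2(\mathcal O)$; it provides no existence statement, so the argument is circular at that step.

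The paper's proof closes exactly this gap by imposing strictly stronger congruences on the Frobenius trace $s$ and then proving they are sufficient for jellyfish membership over $\FF_q$. When $q\equiv 5\pmod 8$ it requires $\nu_2(|E(\FF_q)|)>4$ (equivalently a congruence on $s$ modulo $32$), and Lemmas \ref{lem:planktondisjoint} and \ref{lem:fishdisjoint} show those isogeny classes avoid isolated points and fish, hence lie in jellyfish by Proposition \ref{thm:q5mod8}. When $q\equiv 1\pmod 8$ it requires $s\equiv \pm 46\sqrt q\pmod{128}$ with $p\nmid s$; Lemma \ref{lem:2splitsinendomorphismring} shows 2 splits, Lemma \ref{lem:hurwitzclassnum4tors} produces a Legendre form, and Lemma \ref{lem:inhead} shows the resulting point is in a jellyfish head over $\FF_q$ (not just over an extension). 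These stronger congruences are also what produce the stated constants $\tfrac{p-1}{8p}$ and $\tfrac{p-1}{32p}$; your weaker condition $\disc(K)\equiv 1\pmod 8$ selects a different (larger) set of traces, and even granting correctness the densities would not match as claimed. Finally, the coarse bound $|H|\leq 6\,h_2(\mathcal O)$ and the ``factoring out $(q-1)$ overcounting" step are unnecessary and do not play a role in the paper's argument: since curves in the same jellyfish are isogenous, each admissible trace contributes at least one jellyfish, and that is the only counting needed.
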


\vspace{10pt}
\section{Preliminaries}\label{sec:prelims}

We begin by establishing some results on elliptic curves in Legendre normal form over a field $K$ of characteristic not $2$, $E_{\lambda}:y^2=x(x-1)(x-\lambda)$. Letting $\lambda\in K$ be arbitrary, this parameterizes elliptic curves equipped with two distinguished $2$-torsion points. The $j$-invariant for $E_{\lambda}$ is given by

\begin{equation*}
    j(\lambda)=j(E_{\lambda})=2^8\frac{(\lambda^2-\lambda+1)^3}{\lambda^2(\lambda-1)^2}.
\end{equation*}

This $j$-invariant map is a $6$-to-$1$ ramified covering $X(2)\cong\mathbb{P}^1\to \mathbb{P}^1\cong X(1)$. The fibers of $j(\lambda)$ are

\begin{equation*}
    \Lambda_\lambda=\left\{\lambda,\frac1\lambda,1-\lambda,\frac1{1-\lambda},\frac{\lambda}{\lambda-1},\frac{\lambda-1}{\lambda}\right\}.
\end{equation*}

The map is ramified over $j=0$, $1728$, and $\infty$; see \cite[p.~49]{Silverman1986TheAO}.

Before we proceed with the rest of this section, we must state a concrete version of the $2$-descent lemma, which will be readily used throughout this paper. Its proof can be found in many texts on elliptic curves, for example \cite[Prop X.1.4]{Silverman1986TheAO} or \cite[Prop. I.20]{KoblitzECMF}.

\begin{lemma}[Classical $2$-descent]\label{lem:2descent}
    Let $K$ be a field of $\text{char}(K)\neq 2$ and $E:y^2=(x-\alpha)(x-\beta)(x-\gamma)$ be an elliptic curve with $\alpha,\beta,\gamma\in K$. Then a point $(x,y)\in E(K)$ is also in $2E(K)$ if and only if the three numbers
    \begin{equation*}
        x-\alpha \qquad x-\beta \qquad x-\gamma
    \end{equation*}
    are all squares in $K$.
\end{lemma}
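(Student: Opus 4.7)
The plan is to introduce the \emph{descent homomorphism}
\[
\phi\colon E(K)\to (K^\times/K^{\times 2})^3,\qquad (x,y)\mapsto (x-\alpha,\;x-\beta,\;x-\gamma)\bmod K^{\times 2},
\]
with $\phi(O)=(1,1,1)$ and the standard renormalization at the three $2$-torsion vertices (e.g.\ $\phi(\alpha,0)=((\alpha-\beta)(\alpha-\gamma),\;\alpha-\beta,\;\alpha-\gamma)$, so that no component is zero). The lemma is equivalent to the assertion that $\ker\phi = 2E(K)$.

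The first step is to verify $\phi$ is a group homomorphism. For any three points $P_1,P_2,P_3\in E(K)$ with $P_1+P_2+P_3=O$, they are collinear on some line $y=mx+c$, and substitution into the Weierstrass equation yields the polynomial identity
\[
(x-\alpha)(x-\beta)(x-\gamma)-(mx+c)^2=(x-x_1)(x-x_2)(x-x_3)
\]
in $K[x]$. Evaluating at $x=\alpha$ gives
\[
(x_1-\alpha)(x_2-\alpha)(x_3-\alpha)=(m\alpha+c)^2\in K^{\times 2},
\]
and similarly at $\beta$ and $\gamma$. Hence $\phi(P_1)\phi(P_2)\phi(P_3)=(1,1,1)$. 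Vertical lines (inverses), tangent lines (doubling), and the $2$-torsion cases are each handled by a short direct calculation with the renormalized values. The containment $2E(K)\subseteq\ker\phi$ is then immediate, since $\phi(2Q)=\phi(Q)^2=(1,1,1)$ in a group of exponent two.

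The nontrivial direction, $\ker\phi\subseteq 2E(K)$, is the heart of the argument. Given $P=(x,y)$ with $x-\alpha=u^2$, $x-\beta=v^2$, $x-\gamma=w^2$, I would choose signs so that $uvw=y$ (possible since $u^2v^2w^2=y^2$) and then construct $Q=(X,Y)\in E(K)$ with $2Q=P$ explicitly. Requiring that the tangent to $E$ at $Q$ meets $E$ again at $-P=(x,-y)$ forces the cubic $(X'-\alpha)(X'-\beta)(X'-\gamma)-(mX'+c)^2$ to equal $(X'-X)^2(X'-x)$. Matching coefficients gives
\[
2X+x=\alpha+\beta+\gamma+m^2,\qquad X^2+2Xx=\alpha\beta+\alpha\gamma+\beta\gamma-2mc,\qquad X^2 x=\alpha\beta\gamma+c^2,
\]
and using the identities $u^2-v^2=\beta-\alpha$, $v^2-w^2=\gamma-\beta$, these can be solved to express $m$ as a symmetric expression in $u,v,w$ and $X$ as a rational function of $u,v,w,\alpha,\beta,\gamma$. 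Checking that the resulting $Q$ lies on $E(K)$ and doubles to $P$ is then a polynomial identity in $u,v,w$.

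The principal obstacle is precisely this explicit construction: the ``right'' formula for $Q$ requires a careful sign convention for the square roots and a somewhat unmotivated combination of $u,v,w$, and without the Galois-cohomological machinery coming from the Kummer sequence for $[2]\colon E\to E$, discovering it is the only genuinely nontrivial step. The extension of $\phi$ to the $2$-torsion points, so that it remains a well-defined homomorphism on all of $E(K)$, is a second, smaller technicality that I would dispatch by fixing the renormalization above and verifying compatibility with the collinearity identity.
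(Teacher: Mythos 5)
The paper does not prove Lemma~\ref{lem:2descent}; it cites it as classical, pointing to \cite[Prop.~X.1.4]{Silverman1986TheAO} and \cite[Prop.~I.20]{KoblitzECMF}. Your proposal reconstructs the standard explicit argument (closest in spirit to Koblitz): the descent map
\[
\phi\colon E(K)\to (K^\times/K^{\times 2})^3,\qquad (x,y)\mapsto (x-\alpha,\,x-\beta,\,x-\gamma),
\]
with the usual renormalization at the three $2$-torsion points, together with the observation that collinearity of $P_1,P_2,P_3$ makes $\prod_i(x_i-\alpha)$ a square. This part is set up correctly, and the containment $2E(K)\subseteq\ker\phi$ does follow at once because the target has exponent two. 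You also correctly observe that requiring the tangent at $Q$ to pass through $-P$ is the right way to force $2Q=P$.

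The genuine gap is in the converse inclusion $\ker\phi\subseteq 2E(K)$. You write down the Vieta system for the unknown tangent line and the doubled point, but you never solve it: you do not produce the slope $m$ or the coordinate $X$ as functions of $u,v,w,\alpha,\beta,\gamma$, nor verify that the resulting $(X,Y)$ lies in $E(K)$. As written, the proposal therefore proves only one direction of the lemma and gestures at the other. To close it you would either need to carry out that algebra explicitly (Koblitz does), or switch to the Galois-cohomological route you allude to but set aside: the Kummer sequence
\[
0\to E[2]\to E\xrightarrow{[2]} E\to 0
\]
yields an injection $E(K)/2E(K)\hookrightarrow H^1(G_K,E[2])\cong (K^\times/K^{\times 2})^2$ whose composition with the isomorphism is exactly the descent map (with one redundant coordinate), and injectivity of the connecting map gives $\ker\phi=2E(K)$ with no explicit construction needed. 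This is Silverman's approach, and for a lemma of this generality it is the cleaner one.

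One small point worth stating explicitly: the lemma as phrased does cover $2$-torsion points $(\alpha,0)$, where one coordinate of $\phi$ vanishes; since $0$ is a square, the stated condition reduces to ``$\alpha-\beta$ and $\alpha-\gamma$ are squares,'' which agrees with the renormalized value $((\alpha-\beta)(\alpha-\gamma),\alpha-\beta,\alpha-\gamma)$ being trivial. Your renormalization handles this, but you should say so, since the naive map $(x-\alpha,x-\beta,x-\gamma)$ is not literally valued in $(K^\times)^3$ there.
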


We first develop some lemmas allowing the explicit computations necessary for Section \ref{sec:counting}.

\begin{lemma}\label{lem:doublingformulas}
    Let $K$ be a field of ${\rm char}(K)\neq 2$, $\lambda\in K$, $E_\lambda:y^2=x(x-1)(x-\lambda)$, and $P=(x_0,y_0)\in E(K)$. Then the $x$-coordinate of $2P=(x_1,y_1)$ is given by
    \begin{equation*}
        x_1=\left(\frac{3x_0^2-2(1+\lambda)x_0+\lambda}{2y_0}\right)^2+1+\lambda-2x_0.
    \end{equation*}
\end{lemma}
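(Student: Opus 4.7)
The plan is to use the standard chord-tangent construction applied to the specific cubic $f(x)=x(x-1)(x-\lambda)$. First I would expand
\[
f(x)=x(x-1)(x-\lambda)=x^3-(1+\lambda)x^2+\lambda x,
\]
so that the curve takes the form $y^2=f(x)$. Implicit differentiation of $y^2=f(x)$ at the point $P=(x_0,y_0)$ (which has $y_0\neq 0$, since the doubling formula only makes sense for non-$2$-torsion points) yields the slope of the tangent line
\[
m=\frac{f'(x_0)}{2y_0}=\frac{3x_0^2-2(1+\lambda)x_0+\lambda}{2y_0}.
\]

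Next I would substitute $y=m(x-x_0)+y_0$ into $y^2=f(x)$ to obtain the cubic
\[
x^3-(1+\lambda)x^2+\lambda x-\bigl(m(x-x_0)+y_0\bigr)^2=0.
\]
Its three roots (with multiplicity) are the $x$-coordinates of the intersections of the tangent line with $E_\lambda$; since the line is tangent to $E_\lambda$ at $P$, the value $x_0$ appears as a double root, and the remaining root $x_1$ is precisely the $x$-coordinate of the third intersection $-2P$, which coincides with the $x$-coordinate of $2P$.

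The final step is Vieta: the coefficient of $x^2$ in the cubic equals $-(1+\lambda)-m^2$, so the sum of the three roots is $(1+\lambda)+m^2$. Hence
\[
2x_0+x_1=(1+\lambda)+m^2,
\]
which rearranges exactly to the claimed formula. There is no genuine obstacle here — the entire argument is a specialization of the classical tangent-doubling formula — so the only thing to be careful about is bookkeeping of signs in expanding $f(x)$ and in the Vieta step.
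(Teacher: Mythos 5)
Your proposal is correct and matches the paper's proof essentially step for step: expand the cubic, compute the tangent slope via implicit differentiation, substitute the tangent line into $y^2=f(x)$, and read off $x_1$ from Vieta on the $x^2$-coefficient. Nothing to add.
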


\begin{proof}
    Expanding $E_\lambda:y^2=x(x-1)(x-\lambda)$, we obtain $y^2=x^3-(1+\lambda)x^2+\lambda x$. By implicit differentiation, we see that the tangent line to $(x_0,y_0)$ is given by
    \begin{equation*}
        L:y=\frac{3x_0^2-2(1+\lambda)x_0+\lambda}{2y_0}(x-x_0)+y_0.
    \end{equation*}
    Substituting this into $E_\lambda$, we get
    \begin{equation*}
        \left(\frac{3x_0^2-2(1+\lambda)x_0+\lambda}{2y_0}(x-x_0)+y_0\right)^2=x^3-(1+\lambda)x^2+\lambda x.
    \end{equation*}
    This is a degree three polynomial with roots $x_0,x_0,x_1$, so by Vieta's formulas on the coefficient of $x^2$ we get $2x_0+x_1=-(x^2\text{ coefficient})$, or
    \begin{equation*}
        x_1=\left(\frac{3x_0^2-2(1+\lambda)x_0+\lambda}{2y_0}\right)^2+1+\lambda-2x_0,
    \end{equation*}
    as desired.
\end{proof}

Using this lemma, we can explicitly compute the $4$-torsion points of $E_{\lambda}$.

\begin{lemma}\label{lem:order4points}
    The $6$ distinct $x$-coordinates of the points $P\in E(\bar K)$ of exact order $4$ are
    \begin{center}
        \begin{tabular}{|c|c|c|}
            \hline
            $2P$ & $P$ & $P$ \\
            \hline
            $(0,0)$ & $\sqrt\lambda$ & $-\sqrt\lambda$ \\
            \hline
            $(1,0)$ & $1+\sqrt{1-\lambda}$ & $1-\sqrt{1-\lambda}$ \\
            \hline
            $(\lambda,0)$ & $\lambda+\sqrt{\lambda^2-\lambda}$ & $\lambda-\sqrt{\lambda^2-\lambda}$ \\
            \hline
        \end{tabular}
    \end{center}
\end{lemma}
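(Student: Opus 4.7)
My plan is to use the doubling formula of Lemma~\ref{lem:doublingformulas} to explicitly invert the multiplication-by-$2$ map on each nontrivial $2$-torsion point of $E_\lambda$. The nontrivial $2$-torsion points are $T_\alpha = (\alpha, 0)$ for $\alpha \in \{0, 1, \lambda\}$. Since $[2]$ has degree $4$ and $E_\lambda[2]$ is \'etale, each $T_\alpha$ has exactly $4$ preimages in $E_\lambda(\bar K)$; these come in $\pm$-pairs (if $2P = T_\alpha$ then $2(-P) = -T_\alpha = T_\alpha$) and $P \ne -P$ since $P$ is not $2$-torsion, so each $T_\alpha$ contributes exactly $2$ distinct $x$-coordinates, for a total of $6$ as claimed.

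To identify these six $x$-coordinates, I would translate $2P = T_\alpha$ into the geometric statement that the tangent line to $E_\lambda$ at $P = (x_0, y_0)$ meets the cubic again at $-2P = -T_\alpha = T_\alpha$. Writing $f(x) = x(x-1)(x-\lambda)$, the tangent has slope $f'(x_0)/(2y_0)$, so passing it through $(\alpha, 0)$ and using $y_0^2 = f(x_0)$ yields
\[
2\,f(x_0) \;=\; (x_0 - \alpha)\,f'(x_0),
\]
which is exactly what one gets from Lemma~\ref{lem:doublingformulas} by setting $x_1 = \alpha$ and clearing denominators. Writing $f(x) = (x-\alpha)(x-\beta)(x-\gamma)$ with $\{\beta, \gamma\} = \{0, 1, \lambda\} \setminus \{\alpha\}$ and dividing out the common factor $(x_0 - \alpha)$ (nonzero since $P \ne T_\alpha$: otherwise $2P = O \ne T_\alpha$), routine algebra collapses the identity to the quadratic
\[
(x_0 - \alpha)^2 \;=\; (\alpha - \beta)(\alpha - \gamma),
\]
whose roots are $x_0 = \alpha \pm \sqrt{(\alpha - \beta)(\alpha - \gamma)}$.

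Specializing $\alpha \in \{0, 1, \lambda\}$ gives $\pm\sqrt{\lambda}$ above $(0,0)$, $1 \pm \sqrt{1 - \lambda}$ above $(1,0)$, and $\lambda \pm \sqrt{\lambda^2 - \lambda}$ above $(\lambda, 0)$, which are precisely the six entries in the table. The only mildly delicate step is the algebraic reduction of the tangency identity to the quadratic above; once that is in place, the three rows of the table drop out by direct substitution, and distinctness of the six $x$-coordinates is automatic since the three pairs sit above three distinct $\alpha$-values (with the two entries in each pair distinct whenever $\lambda \notin \{0, 1\}$, i.e., whenever $E_\lambda$ is smooth).
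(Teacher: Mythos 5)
Your argument is correct and follows the same underlying approach as the paper: substitute $x_1 \in \{0,1,\lambda\}$ into the doubling formula of Lemma~\ref{lem:doublingformulas} and solve for $x_0$. Where the paper simply asserts that ``simplifying and solving for $x_0$ yields the result,'' you make that simplification uniform and transparent by writing $f(x)=(x-\alpha)(x-\beta)(x-\gamma)$, passing through the tangent-line identity $2f(x_0)=(x_0-\alpha)f'(x_0)$, and factoring out $(x_0-\alpha)$ to reach $(x_0-\alpha)^2=(\alpha-\beta)(\alpha-\gamma)$ — this handles all three rows of the table at once and is a genuine improvement in clarity. One small imprecision worth noting: the cubic $2f(x_0)=(x_0-\alpha)f'(x_0)$ is not literally what you get by setting $x_1=\alpha$ in the doubling formula and clearing denominators; that procedure produces the degree-$4$ equation $f'(x_0)^2 + 4f(x_0)(1+\lambda-2x_0-\alpha)=0$, which in fact equals the perfect square $\bigl[(x_0-\alpha)^2-(\alpha-\beta)(\alpha-\gamma)\bigr]^2$ rather than $(x_0-\alpha)$ times your quadratic. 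Your cubic comes instead from the geometric observation that the tangent line at $P$ passes through $-2P=T_\alpha$. Since that geometric derivation is self-contained and valid for $y_0\neq 0$ (which holds as $P$ has exact order $4$), the proof is unaffected; you just shouldn't claim the two polynomials are identical.
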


\begin{proof}
    These follow from explicit calculations using Lemma \ref{lem:doublingformulas}. In particular, substituting $x_1=0,1,\lambda$ and $y_0^2=x_0^3-(1+\lambda)x_0^2+\lambda x_0$, we get
    \begin{align*}
        0&=(3x_0^2-2(1+\lambda)x_0+\lambda)^2+4(x_0^3-(1+\lambda)x_0^2+\lambda x_0)(1+\lambda-2x_0), \\
        0&=(3x_0^2-2(1+\lambda)x_0+\lambda)^2+4(x_0^3-(1+\lambda)x_0^2+\lambda x_0)(\lambda-2x_0), \\
        0&=(3x_0^2-2(1+\lambda)x_0+\lambda)^2+4(x_0^3-(1+\lambda)x_0^2+\lambda x_0)(1-2x_0).
    \end{align*}
    Simplifying and solving for $x_0$ yields the result.
\end{proof}

We have the following result, analogous to \cite[Lemma 2]{GOSTJellyfish}.

\begin{lemma}\label{lem:2torsingeneral}
    Let $K$ be a field of odd characteristic and $\alpha\neq-1,0,1\in K$. Then the $2$-Sylow subgroup of $E_{\alpha^2}(\FF_q)$ contains $\ZZ/2\ZZ\oplus \ZZ/4\ZZ$.
\end{lemma}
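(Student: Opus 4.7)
The plan is to produce a point of exact order $4$ in $E_{\alpha^2}(\FF_q)$. Combined with the full $2$-torsion $\{O,(0,0),(1,0),(\alpha^2,0)\}\cong\ZZ/2\oplus\ZZ/2$, which always lies in $E_{\alpha^2}(\FF_q)$ since all three $x$-values $0,1,\alpha^2$ are in $\FF_q$, any such point $P$ together with a $2$-torsion point $Q\notin\langle P\rangle$ generates a subgroup $\ZZ/2\oplus\ZZ/4$ inside the $2$-Sylow.

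By Lemma \ref{lem:order4points} applied with $\lambda=\alpha^2$, the six $x$-coordinates of order-$4$ points on $E_{\alpha^2}$ are
\[
\pm\alpha, \qquad 1\pm\sqrt{1-\alpha^2}, \qquad \alpha^2\pm\alpha\sqrt{\alpha^2-1}.
\]
I would split into two cases according to whether $-1\in\FF_q^{\times 2}$. If $-1=i^2$ in $\FF_q$, then $\pm\alpha\in\FF_q$, and a direct check gives $y_0^2=\alpha(\alpha-1)(\alpha-\alpha^2)=-\alpha^2(\alpha-1)^2$, so $P=(\alpha,\,i\alpha(\alpha-1))\in E_{\alpha^2}(\FF_q)$ has $2P=(0,0)$ and thus order $4$. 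Otherwise $-1$ is a non-square; since $\alpha\neq\pm1$ the element $\alpha^2-1$ is nonzero, and exactly one of $\alpha^2-1$ and $1-\alpha^2=-(\alpha^2-1)$ is a square in $\FF_q$. If $1-\alpha^2=s^2$, then $P=(1+s,\,s(1+s))$ lies on $E_{\alpha^2}$ with $2P=(1,0)$, using the identity $(1+s)\cdot s\cdot (1+s-\alpha^2)=s^2(1+s)^2$. If instead $\alpha^2-1=t^2$, then $P=(\alpha^2+\alpha t,\,\alpha t(\alpha+t))$ lies on $E_{\alpha^2}$ with $2P=(\alpha^2,0)$ by the analogous factoring.

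In each case, choosing $Q\in\{(0,0),(1,0),(\alpha^2,0)\}$ with $Q\neq 2P$ gives the desired $\ZZ/2\oplus\ZZ/4\subseteq E_{\alpha^2}(\FF_q)$. The essential observation, rather than any real obstacle, is that when $-1$ is a non-square in $\FF_q$ the elements $\alpha^2-1$ and $1-\alpha^2$ have opposite quadratic character, so at least one of the latter two pairs of $x$-coordinates is defined over $\FF_q$; once the relevant $x$-coordinate lies in $\FF_q$, the corresponding $y$-coordinate does as well by the factorizations above. The only subtlety to watch is confirming that $P$ has order exactly $4$ (not $2$), which follows because its $x$-coordinate differs from $0,1,\alpha^2$ under the hypothesis $\alpha\neq 0,\pm 1$.
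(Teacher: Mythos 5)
Your proposal is correct, and the casework structure is the same as the paper's: both split on whether $-1$ is a square, and both hinge on the observation that when $-1$ is a non-square, exactly one of $1-\alpha^2$ and $\alpha^2-1$ is a square. The difference is in the tool used to certify that a $2$-torsion point is a double: the paper invokes the abstract $2$-descent criterion (Lemma \ref{lem:2descent}) to conclude, e.g., that $(1,0)\in 2E(K)$ when $1-\alpha^2$ is a square, whereas you explicitly construct a rational preimage under doubling using Lemma \ref{lem:order4points} and the factorizations $s\cdot(1+s)\cdot(1+s-\alpha^2)=s^2(1+s)^2$ and similar. These are two sides of the same coin, so the approaches are essentially equivalent; the $2$-descent route is a bit shorter since it avoids writing down the $y$-coordinates, while your route is more self-contained and verifies the claim by direct substitution. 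One small point to be careful about (which you do flag): you must confirm the constructed $P$ has exact order $4$ rather than $2$, which follows since its $x$-coordinate is distinct from $0,1,\alpha^2$ under $\alpha\neq 0,\pm 1$; the paper sidesteps this entirely since $2$-descent detects membership in $2E(K)$ directly.
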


\begin{proof}
    We do casework based on whether $i\in K$. If $i\not \in K$, then $2$-descent (Lemma \ref{lem:2descent}) on $(1,0)$ and $(\alpha^2,0)$ gives the following two triples $(x-\alpha,x-\beta,x-\gamma)$:
    \[\begin{matrix}
                (1,0): & 1 & 0 & 1-\alpha^2, \\
        (\alpha^2,0): \hspace{6pt}&  \alpha^2& \alpha^2-1 & 0.\\
    \end{matrix} \]
    Since $1,0,\alpha^2$ are squares, we have that $(1,0)\in 2E(K)$ if and only if $1-\alpha^2\in K^{\times 2}$, and $(\alpha^2,0)\in 2E(K)$ if and only if $\alpha^2-1\in K^{\times2}$. But since $i\not\in K$, exactly one of these is square.

    If $i\in K$, then applying $2$-descent on $(0,0)$ yields
    \begin{equation*}
        (0,0): \qquad 0 \qquad -1 \qquad -\alpha^2,
    \end{equation*}
    all of which are squares since $i\in K$.
\end{proof}

\begin{lemma}\label{lem:isooverFq}
    Let $K$ be a field of odd characteristic with $i\in K$ and $\alpha\neq -1,0,1\in K$ such that $1-\alpha^2$ is a square. Let $\Lambda_{\alpha^2}$ be the set
    \begin{equation*}
        \Lambda_{\alpha^2}=\left\{\alpha^2, \frac1{\alpha^2},1-\alpha^2,\frac1{1-\alpha^2}, \frac{\alpha^2}{\alpha^2-1}, \frac{\alpha^2-1}{\alpha^2}\right\},
    \end{equation*}
    (the fiber of $j(\alpha^2)$). Then 
    \begin{enumerate}
        \item $\Lambda_{\alpha^2}\subseteq K^{\times 2}$.
        \item The elliptic curves $E_{\lambda}$ for $\lambda \in \Lambda_{\alpha^2}$ are all isomorphic over $K$. 
    \end{enumerate}
\end{lemma}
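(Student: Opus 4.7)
The plan is to prove parts (1) and (2) separately, noting that the hypotheses $\alpha \in K$, $\sqrt{1-\alpha^2} \in K$, and $i \in K$ together furnish every square root that will be needed.

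For (1), I would verify each of the six elements of $\Lambda_{\alpha^2}$ is a square directly. Three are immediate: $\alpha^2 = \alpha^2$ is a square, $1/\alpha^2 = (1/\alpha)^2$ is a square, and $1-\alpha^2$ is a square by assumption. For the other three, write $\alpha^2 - 1 = -(1-\alpha^2) = (i\sqrt{1-\alpha^2})^2$, which lies in $K^{\times 2}$ since $i \in K$. The remaining elements $1/(1-\alpha^2)$, $\alpha^2/(\alpha^2-1)$, and $(\alpha^2-1)/\alpha^2$ are then products and quotients of squares, hence squares.

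For (2), the approach is to exploit the $S_3$ action on the three roots $\{0, 1, \alpha^2\}$ of the Weierstrass polynomial defining $E_{\alpha^2}$. Concretely, for any ordering $(e_1, e_2, e_3)$ of these roots, the change of variables $x = (e_2-e_1)X + e_1$, $y = (e_2-e_1)^{3/2} Y$ defines an isomorphism from $E_{\alpha^2}$ to $E_{\lambda'}$, where $\lambda' = (e_3-e_1)/(e_2-e_1)$; letting $(e_1, e_2, e_3)$ range over the six permutations of $\{0, 1, \alpha^2\}$ produces exactly the six elements of $\Lambda_{\alpha^2}$ as the resulting $\lambda'$. This isomorphism is defined over $K$ precisely when $(e_2-e_1)^{3/2} \in K$, which in turn holds whenever $e_2 - e_1$ is a square in $K$. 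The six possible values of $e_2 - e_1$ are $\pm 1, \pm \alpha^2, \pm(\alpha^2-1)$, and every one of these lies in $K^{\times 2}$ by part (1) (the signs being absorbed using $i \in K$).

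There is no real obstacle here: the content is bookkeeping, organizing the six permutations against the six Legendre parameters and checking that the hypothesis $1-\alpha^2 \in K^{\times 2}$ together with $i \in K$ supplies every square root required. The only mild subtlety is to notice that part (1) is not merely parallel to part (2) but is in fact used to guarantee the isomorphisms of part (2) are $K$-rational; this is why the two statements are bundled in a single lemma.
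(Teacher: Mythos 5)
Your proof is correct and follows essentially the same approach as the paper: part (1) is a direct verification that each element of $\Lambda_{\alpha^2}$ is a square modulo $K^{\times2}$, using $i\in K$ to absorb the sign, and part (2) relies on affine changes of variable that permute the roots $\{0,1,\alpha^2\}$, with $K$-rationality guaranteed because the relevant leading coefficient $(e_2-e_1)$ is always a square. The only presentational difference is that you write a single uniform formula covering all six permutations at once, whereas the paper exhibits explicit isomorphisms for the two generators $\lambda\mapsto 1/\lambda$ and $\lambda\mapsto 1-\lambda$ and then composes them; both routes use part (1) in the same essential way to keep everything defined over $K$.
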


\begin{proof}
\noindent\begin{enumerate}

   \item We know that $\alpha^2$ and $1-\alpha^2$ are squares by assumption. This implies that their reciprocals $1/\alpha^2$ and $1/(1-\alpha^2)$ are squares. Then
    \begin{equation*}
        \frac{\alpha^2-1}{\alpha^2}=1-\frac{1}{\alpha^2}\equiv \alpha^2-1\equiv -(\alpha^2-1)\equiv 1-\alpha^2 \mod K^{\times 2},
    \end{equation*}
    (note that $\alpha^2-1\equiv -(\alpha^2-1)$ requires $i\in K$) so $(\alpha^2-1)/\alpha^2$ is also square, which implies its reciprocal is square.

    \item Beginning with $E_{\alpha^2}$, we have the following isomorphisms that can be checked with direct calculation:
    \begin{enumerate}
        \item[(a)] $E_{1/\alpha^2}$: the map $\varphi_{1/\lambda}:E_{\alpha^2}\to E_{1/\alpha^2}$ given by $(x,y)\mapsto (x/\alpha^2,y/\alpha^3)$ is an isomorphism;
        \item[(b)] $E_{1-\alpha^2}$: the map $\varphi_{1-\lambda}:E_{\alpha^2}\to E_{1-\alpha^2}$ given by $(x,y)\mapsto (1-x,iy)$ is an isomorphism.
    \end{enumerate}
    Since the transformations $\alpha^2\mapsto 1-\alpha^2$ and $\alpha^2\mapsto 1/\alpha^2$ generate the set $\Lambda_{\alpha^2}$ and the hypothesis of the lemma holds for all $\lambda\in\Lambda_{\alpha^2}$ by (1), we may repeatedly compose $\varphi_{1/\lambda}$ and $\varphi_{1-\lambda}$ to obtain the result. 
\end{enumerate}

\end{proof}

The following Lemma will be useful in Section \ref{sec:idealclasses} for using the theory of CM of these elliptic curves to study the AGM graph. 

\begin{lemma}\label{lem:rivalscorresponding2tors}
    Let $\lambda\in \bar K$ with ${\rm char}(\bar K)>2$. For the elliptic curve $E_\lambda$, denote the points $(0,0)=P_{0,\lambda},(1,0)=P_{1,\lambda},(\lambda,0)=P_{2,\lambda}$.

    Let $E_\lambda$ be an elliptic curve with $j(E_\lambda)\neq 0,1728$. Let $\lambda'\in \Lambda_\lambda$, then there is a unique permutation $\sigma_{\lambda'}\in S_3=\text{Sym}\{0,1,2\}$ such that for any isomorphism $\varphi:E_\lambda\to E_{\lambda'}$, we have
    \begin{equation*}
        \varphi(P_{0,\lambda})=P_{\sigma_{\lambda'}(0),\lambda'} \qquad \varphi(P_{1,\lambda})=P_{\sigma_{\lambda'}(1),\lambda'} \qquad \varphi(P_{2,\lambda})=P_{\sigma_{\lambda'}(2),\lambda'}
    \end{equation*}
    Moreover, for each $\lambda'\in \Lambda_\lambda$, the associated $\sigma_{\lambda'}$ is given by
    \begin{align*}
        &\sigma_{\lambda}=\text{id} &&\sigma_{1/\lambda}=(12) &&&\sigma_{1-\lambda}=(01) \\
        &\sigma_{1/(1-\lambda)}=(021) &&\sigma_{(\lambda-1)/\lambda}=(012) &&&\sigma_{\lambda/(\lambda-1)}=(02).
    \end{align*}
\end{lemma}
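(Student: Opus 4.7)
The plan is to first establish uniqueness of $\sigma_{\lambda'}$, then realize the permutation explicitly for the two generators $1/\lambda$ and $1-\lambda$ of the $S_3$-action on $\Lambda_\lambda$, and obtain the remaining four values by composition. For uniqueness, the hypothesis $j(E_\lambda)\neq 0, 1728$ guarantees $\aut(E_{\lambda'})=\{\pm 1\}$, and since $[-1]:(x,y)\mapsto(x,-y)$ fixes every $2$-torsion point, any two isomorphisms $\varphi_1,\varphi_2:E_\lambda\to E_{\lambda'}$ --- which differ by postcomposition with an element of $\aut(E_{\lambda'})$ --- induce the same permutation on $\{P_{0,\lambda'},P_{1,\lambda'},P_{2,\lambda'}\}$, so $\sigma_{\lambda'}$ is well-defined.

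For the two generators I would use the explicit isomorphisms already written down in the proof of Lemma \ref{lem:isooverFq}. The map $\varphi_{1/\lambda}:(x,y)\mapsto (x/\lambda,\,y/\lambda^{3/2})$ (well-defined over $\bar K$ once a square root of $\lambda$ is chosen) sends $(0,0),(1,0),(\lambda,0)$ to $(0,0),(1/\lambda,0),(1,0)$, i.e.\ to $P_{0,1/\lambda},P_{2,1/\lambda},P_{1,1/\lambda}$, giving $\sigma_{1/\lambda}=(12)$. Similarly $\varphi_{1-\lambda}:(x,y)\mapsto(1-x,\,iy)$ sends $(0,0),(1,0),(\lambda,0)$ to $(1,0),(0,0),(1-\lambda,0)$, so $\sigma_{1-\lambda}=(01)$.

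For the remaining four values, the key point is that the assignment is functorial with respect to composition: if $E_\lambda\xrightarrow{\varphi}E_\mu\xrightarrow{\psi}E_{\mu'}$, then $\psi\circ\varphi$ induces the composition of the two associated permutations on the respective labeled $2$-torsion sets. Since $\mu\mapsto 1/\mu$ and $\mu\mapsto 1-\mu$ generate the $S_3$-action on $\Lambda_\lambda$, each remaining case follows by chaining the two building blocks over the appropriate intermediate curves. For instance, $\lambda\to 1-\lambda\to 1/(1-\lambda)$ gives $\sigma_{1/(1-\lambda)}=(12)(01)=(021)$; $\lambda\to 1/\lambda\to (\lambda-1)/\lambda$ gives $\sigma_{(\lambda-1)/\lambda}=(01)(12)=(012)$; and a three-step chain such as $\lambda\to 1/\lambda\to(\lambda-1)/\lambda\to\lambda/(\lambda-1)$ gives $\sigma_{\lambda/(\lambda-1)}=(12)(01)(12)=(02)$.

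I do not foresee a real obstacle: once uniqueness is secured, the rest is mechanical verification. The only delicate point is tracking, at each step of a composition, which $2$-torsion index on the intermediate curve each transposition is acting on; working abstractly in $S_3=\text{Sym}\{0,1,2\}$ rather than substituting $\lambda$-expressions keeps this bookkeeping transparent.
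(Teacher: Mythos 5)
Your proposal is correct and follows essentially the same route as the paper's own proof: uniqueness via $\aut(E_{\lambda'})=\{\pm 1\}$ fixing the $2$-torsion (the paper phrases this as an automorphism of $E_\lambda$ rather than of $E_{\lambda'}$, but this is the same thing), explicit computation of $\sigma_{1/\lambda}$ and $\sigma_{1-\lambda}$ from the isomorphisms in Lemma~\ref{lem:isooverFq}, and then recovering the remaining four permutations by composing these two generators, with the composition order yielding exactly the cycles $(021)$, $(012)$, $(02)$ as in the paper.
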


\begin{proof}
    First we show uniqueness of $\sigma_{\lambda'}$. Suppose $\varphi_1,\varphi_2:E_{\lambda}\to E_{\lambda'}$ are two isomorphisms of elliptic curves. Define $\sigma_i$ for $i\in\{0,1\}$ by
    \begin{equation*}
        \varphi_i(P_{j,\lambda})=P_{\sigma_i(j),\lambda'}
    \end{equation*}
    for $j\in\{0,1,2\}$. Then $\psi=\varphi_2^{-1}\circ \varphi_1:E_{\lambda}\to E_{\lambda}$ is an automorphism of $E_{\lambda}$. We have $\psi(P_{j,\lambda})=P_{\sigma_2^{-1}\sigma_1(j),\lambda}$. Since $j(E_{\lambda})\neq0,1728$, all of its automorphisms fix $E[2]$, so $\sigma_2^{-1}\sigma_1=\text{id}$ and we obtain uniqueness.

    Now we compute $\sigma_{\lambda'}$ for each $\lambda'\in \Lambda_\lambda$, for this we use the isomorphisms given in Lemma \ref{lem:isooverFq}. We first verify $\sigma_{1/\lambda}=(12)$ and $\sigma_{1-\lambda}=(01)$:
    \begin{align*}
        &\varphi_{1/\lambda}(0,0)=(0,0), &&\varphi_{1/\lambda}(1,0)=(1/\lambda,0),  &&&\varphi_{1/\lambda}(\lambda,0)=(1,0), \\
        &\varphi_{1-\lambda}(0,0)=(1,0), &&\varphi_{1-\lambda}(1,0)=(0,0), &&&\varphi_{1-\lambda}(\lambda,0)=(1-\lambda,0).
    \end{align*}
    From here, we use the uniqueness of $\sigma_{\lambda'}$ to determine the rest
    \begin{align*}
        \sigma_{1/(1-\lambda)}&=\sigma_{1/\lambda}\sigma_{1-\lambda}=(021), \\
        \sigma_{(\lambda-1)/\lambda}&=\sigma_{1-\lambda}\sigma_{1/\lambda}=(012), \\
        \sigma_{\lambda/(\lambda-1)}&=\sigma_{1/\lambda}\sigma_{1-\lambda}\sigma_{1/\lambda}=(02).
    \end{align*}
    The proof is complete.
\end{proof}

Lastly, we note the following result about supersingular elliptic curves in Legendre normal form, which will be required for some of our results in Section \ref{sec:idealclasses}. 
\begin{lemma}[\cite{fourthpowers}, Proposition 3.1]\label{lem:4thpowers} 
Let $E$ be a supersingular elliptic curve over $\overline{\mathbb{F}}_p$ for an odd prime $p$. Then $E$ is expressible in Legendre normal form $y^2 = x(x-1)(x-\lambda)$ where $\lambda \in \mathbb{F}_{p^2}^{\times 4}$ is a fourth power over $\mathbb{F}_{p^2}$.
\end{lemma}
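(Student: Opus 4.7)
The plan is to find a Legendre model $E_{\lambda'}$ of $E$ with $\lambda' \in \mathbb{F}_{p^2}^{\times 4}$ by choosing a judicious representative of the $j$-invariant fiber $\Lambda_\lambda$. Since $E$ is supersingular, $j(E) \in \mathbb{F}_{p^2}$, so $E$ has a model over $\mathbb{F}_{p^2}$. After an appropriate quadratic twist (which does not change the $\overline{\mathbb{F}}_p$-isomorphism class), the Frobenius $\pi_E$ can be arranged to act as a scalar $-p$ or $p$, giving $E(\mathbb{F}_{p^2}) \cong (\mathbb{Z}/N\mathbb{Z})^2$ with $N \in \{p+1,p-1\}$. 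Since $p$ is odd, we may choose between these so that $4 \mid N$, ensuring $E[4] \subset E(\mathbb{F}_{p^2})$. This gives a Legendre model $E_\lambda$ with $\lambda \in \mathbb{F}_{p^2}$ for which each $2$-torsion point lies in $2E(\mathbb{F}_{p^2})$. Applying 2-descent (Lemma \ref{lem:2descent}) to these points, and using $-1 \in (\mathbb{F}_{p^2}^\times)^2$, we deduce that both $\lambda$ and $1-\lambda$ are squares in $\mathbb{F}_{p^2}$.

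The heart of the proof is then a short character identity. Writing $\lambda = \mu^2$ and $1-\lambda = \nu^2$, a direct calculation yields
$(\lambda-1)/\lambda = -\nu^2/\mu^2 = (i\nu/\mu)^2$, where $i = \sqrt{-1} \in \mathbb{F}_{p^2}$, so the three elements $\lambda,\,1-\lambda,\,(\lambda-1)/\lambda$ of $\Lambda_\lambda$ are fourth powers precisely when $\mu,\,\nu,\,i\nu/\mu$ respectively are squares. Letting $\chi$ denote the quadratic character on $\mathbb{F}_{p^2}^\times$, one computes
\[
\chi(\mu)\,\chi(\nu)\,\chi(i\nu/\mu) \;=\; \chi(i\nu^2) \;=\; \chi(i) \;=\; 1,
\]
where the last equality holds because $8 \mid p^2-1$ for every odd prime $p$, making $i$ a square in $\mathbb{F}_{p^2}$. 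The product of three $\pm 1$ values equaling $+1$ prevents all three from being $-1$, so at least one of $\mu,\nu,i\nu/\mu$ is a square, whence the corresponding element of $\Lambda_\lambda$ is the desired fourth power $\lambda'$.

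The main technical obstacle, in my view, is the first reduction: guaranteeing the existence of an $\mathbb{F}_{p^2}$-model with $\pi_E = \pm p$ (so that $E[4]$ becomes rational), which requires appealing to Waterhouse-type facts about the structure of supersingular isogeny classes over $\mathbb{F}_{p^2}$. The exceptional $j$-invariants $0$ and $1728$ also merit separate attention, since $\Lambda_\lambda$ degenerates and the automorphism group is larger than $\{\pm 1\}$; here direct inspection of the distinguished representatives ($\lambda=-1$ and $\lambda$ a primitive sixth root of unity) confirms they are already fourth powers in $\mathbb{F}_{p^2}$. The short character identity $\chi(i)=1$ in the second step, while almost trivial, is the decisive ingredient that upgrades squareness to fourth-powerness.
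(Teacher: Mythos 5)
The paper does not give a proof of this lemma at all; it is imported verbatim from the cited reference \cite{fourthpowers}, Proposition~3.1. So there is no in-paper argument to compare against, and I can only assess your blind proof on its own terms. Taken that way, your argument is correct and pleasantly self-contained.

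A few remarks. The phrase ``after an appropriate quadratic twist'' is slightly loose for $j\neq 0,1728$: twisting only negates the trace, so if the starting $\FF_{p^2}$-model had trace $0$ or $\pm p$, no quadratic twist would produce trace $\pm 2p$. The substantive fact you actually need is that every supersingular $j$-invariant is attained by some $\FF_{p^2}$-curve whose Frobenius is $\pm p$; this is the genuine Waterhouse/Deuring input, and you are right to flag it as the technical obstacle, but it should be stated as an existence claim rather than framed as a twist of an arbitrary model. The rest is fine: $2$-descent does give $\lambda,1-\lambda\in\FF_{p^2}^{\times 2}$ once $E[4]\subset E(\FF_{p^2})$, and your character identity
$\chi(\mu)\chi(\nu)\chi(i\nu/\mu)=\chi(i\nu^2)=\chi(i)=1$
is correct (using $\chi(\nu^2)=1$ and $8\mid p^2-1$). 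Since a fourth power among $\{\lambda,1/\lambda\}$, $\{1-\lambda,1/(1-\lambda)\}$, $\{(\lambda-1)/\lambda,\lambda/(\lambda-1)\}$ suffices, checking your three representatives is enough. For the degenerate $j$-invariants you are actually in better shape than you suggest: when $j=1728$ one finds $\mu=i$, so $\chi(\mu)=\chi(i)=1$ directly and $\lambda=-1$ is already a fourth power; when $j=0$, $(\lambda-1)/\lambda=\lambda$, and the algebra collapses so that $i\nu/\mu=-\mu$, giving $\chi(i\nu/\mu)=\chi(\mu)$ and forcing $\chi(\nu)=1$, so the character argument itself delivers the fourth power without needing a separate inspection. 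In short: the proposal is sound, the decisive identity $\chi(i)=1$ is used correctly, and the one genuinely nontrivial ingredient (an $\FF_{p^2}$-model with $\pi=\pm p$) is properly acknowledged, though worded as a twist when it is really an existence statement about the supersingular isogeny class of trace $\pm 2p$.
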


With some general results about elliptic curves in Legendre normal form established, we now want to look at AGM and its relationship with elliptic curves. For the following, we recall Definition \ref{def:AGM}:

\begin{equation*}
    \AGM(a,b)=\begin{cases}
        \varnothing & ab \in \FF_q^{\times 2}, \\
        \left\{\left(\frac{a+b}{2},\sqrt{ab}\right),\left(\frac{a+b}{2},-\sqrt{ab}\right)\right\} & ab\not\in \FF_q^{\times 2}.
    \end{cases}
\end{equation*}

Also recall Definition \ref{def:AFqVFq} of $V(\FF_q)$ and $A(\FF_q)$.

\begin{lemma}[c.f. {\cite[Thm. 3]{GOSTJellyfish}}]\label{lem:theisog}
    The following hold.
    \begin{enumerate}
        \item For all $\alpha\neq -1,0,1\in \FF_q$, $\alpha^2$ has exactly $2(q-1)$ preimages in $V(\FF_q)$ under the map $(a,b)\mapsto {b^2/a^2}$.
        \item Let $K$ be a field with $char{K}\neq 2$ and let $(a',b')\in \AGM(a,b)$. Then there exists an isogeny
        \begin{equation*}
            \varphi_{a,b}:E_{\lambda(a,b)}\to E_{\lambda(a',b')}
        \end{equation*}
        with kernel $\langle (0,0)\rangle$.
        \item The isogeny dual to $\varphi_{a,b}$ has kernel $\langle (1,0)\rangle$. 
    \end{enumerate}
\end{lemma}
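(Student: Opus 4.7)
The plan is to prove parts (2) and (3) by writing down the classical $2$-isogeny in coordinates and matching its target to a Legendre model via a linear change of variables, after dispatching part (1) with a direct count.

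For part (1), a pair $(a,b)$ satisfies $b^2/a^2=\alpha^2$ iff $b=\pm\alpha a$, and the constraints $a,b\neq 0$ and $a\neq\pm b$ defining $V(\FF_q)$ reduce to $\alpha\neq 0,\pm 1$, which holds by hypothesis. Thus the preimages are exactly $\{(a,\pm\alpha a):a\in\FF_q^\times\}$, giving $2(q-1)$ in total.

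For part (2), set $\beta=b/a$, so that $\lambda(a,b)=\beta^2$ and $E_{\lambda(a,b)}:y^2=x(x^2-(1+\beta^2)x+\beta^2)$. I would apply the standard $2$-isogeny with kernel $\langle(0,0)\rangle$, given in coordinates by $\phi(x,y)=\bigl(y^2/x^2,\;y(\beta^2-x^2)/x^2\bigr)$, whose target is $E':Y^2=X\bigl(X+(1-\beta)^2\bigr)\bigl(X+(1+\beta)^2\bigr)$. Since $a+b\neq 0$, the substitution $X=(1+\beta)^2(u-1)$, $Y=(1+\beta)^3 v$ is a $K$-isomorphism, and a direct computation (using the identity $(1-\beta)^2+4\beta=(1+\beta)^2$) puts $E'$ into Legendre form $v^2=u(u-1)(u-\lambda_1)$ with $\lambda_1=4\beta/(1+\beta)^2=4ab/(a+b)^2=\lambda(a',b')$. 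The composite of $\phi$ with this isomorphism is the desired $\varphi_{a,b}$, and its kernel is $\langle(0,0)\rangle$ by construction.

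For part (3), recall that the kernel of the dual of a $2$-isogeny $\phi$ is the image $\phi(E[2])$. From the explicit formula above, $\phi(1,0)=\phi(\lambda(a,b),0)=(0,0)$ on $E'$, and the coordinate change of part (2) sends $(0,0)\in E'$ to $(1,0)\in E_{\lambda(a',b')}$, so $\ker\hat{\varphi}_{a,b}=\langle(1,0)\rangle$. The main obstacle throughout is bookkeeping: choosing the linear change of variables carefully so that the target of $\phi$ lands precisely on $E_{\lambda_1}$ rather than on a nontrivial quadratic twist. That this works over $K$ (and not merely over $\bar K$) hinges on the identity $(1-\beta)^2+4\beta=(1+\beta)^2$, which is the algebraic shadow of the AGM step $(a,b)\mapsto\bigl((a+b)/2,\sqrt{ab}\bigr)$.
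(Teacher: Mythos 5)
Your proposal is correct, and it arrives at exactly the paper's isogeny, but via a more structured derivation. The paper simply writes down the closed-form map
\begin{equation*}
\varphi_{a,b}(x,y)=\left(\frac{(ax+b)^2}{x(a+b)^2},\,-\frac{ay(ax-b)(ax+b)}{x^2(a+b)^3}\right)
\end{equation*}
and verifies by direct substitution that its image lands on $E_{\lambda(a',b')}$, that its kernel is $\langle(0,0)\rangle$, and that $\varphi_{a,b}(1,0)=(1,0)$. You instead factor this map as the standard V\'elu $2$-isogeny $\phi(x,y)=(y^2/x^2,\,y(\beta^2-x^2)/x^2)$ with target $E':Y^2=X(X+(1-\beta)^2)(X+(1+\beta)^2)$, followed by the affine change $X=(1+\beta)^2(u-1)$, $Y=(1+\beta)^3 v$, which puts $E'$ into Legendre form with $\lambda_1=4\beta/(1+\beta)^2=\lambda(a',b')$. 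One can check these two descriptions literally coincide: substituting $y^2/x=x^2-(1+\beta^2)x+\beta^2$ into your first coordinate gives $(x+\beta)^2/\bigl(x(1+\beta)^2\bigr)=(ax+b)^2/\bigl(x(a+b)^2\bigr)$, and similarly for the second. Your part~(3) argument, tracking $(1,0)\mapsto(0,0)\in E'\mapsto(1,0)\in E_{\lambda_1}$ through the factorization, is the same computation the paper does in one step. What your route buys is an explanation of where the formula comes from and why it lands on the Legendre model over $K$ rather than a twist (the identity $(1-\beta)^2+4\beta=(1+\beta)^2$ being the decisive point); what the paper's route buys is brevity. Part~(1) is identical in both. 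No gaps.
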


\begin{proof} \noindent
\begin{enumerate}
    \item The equation $b^2/a^2=\alpha^2$ splits in to $b/a=\alpha$ and $b/a=-\alpha$, both of which have $q-1$ solutions. 
    \item Define the isogeny $\varphi=\varphi_{a,b}:E_{\lambda(a,b)}\to E_{\lambda(a',b')}$ by
    \begin{equation*}
        \varphi(x,y)=\left(\frac{(ax+b)^2}{x(a+b)^2},-\frac{ay(ax-b)(ax+b)}{x^2(a+b)^3}\right).
    \end{equation*}
    By direct computation we see that the image of $\varphi$ lands in $E_{\lambda(a',b')}$:
    \begin{equation*}
        \left(-\frac{ay(ax-b)(ax+b)}{x^2(a+b)^3}\right)^2=\left(\frac{(ax+b)^2}{x(a+b)^2}\right)\left(\frac{(ax+b)^2}{x(a+b)^2}-1\right)\left(\frac{(ax+b)^2}{x(a+b)^2}-\frac{4ab}{(a+b)^2}\right).
    \end{equation*}
    By homogenizing, we see that $\varphi(O)=O$, so $\varphi_{a,b}$ is an isogeny. Finally, by inspection we deduce that $\ker (\varphi)=\langle (0,0)\rangle$. 

    \item Since $\hat\varphi\circ \varphi=[2]$, where $\hat\varphi$ is dual to $\varphi$, we know that $\ker (\hat \varphi)=\varphi(E[2])$. We know that $\varphi(0,0)=\varphi(O)=O$, so it suffices to determine whether $\varphi(1,0)=\varphi(\lambda,0)=(1,0)$, or $\varphi(1,0)=\varphi(\lambda,0)=(\lambda',0)$. Indeed by direct computation we verify that
    \begin{equation*}
        \varphi(1,0)=\left(\frac{(a\cdot 1+b)^2}{1\cdot (a+b)^2},-\frac{0\cdot(a\cdot 1-b)(a\cdot 1+b)}{1\cdot (a+b)^3}\right)=(1,0),
    \end{equation*}
    which forces $\ker(\hat\varphi)=\langle (1,0)\rangle$. 
    \end{enumerate}
    As desired.
\end{proof}

Recall the notation $(a,b)\mapsto (a',b')$ given in Definition \ref{def:AGM} to mean $(a',b')\in \AGM(a,b)$.

\begin{corollary}\label{cor:nomultby2}
    Let $(a_0,b_0)\mapsto(a_1,b_1)\mapsto(a_2,b_2)$. Then $\ker(\varphi_{a_1,b_1}\circ\varphi_{a_0,b_0})\neq E_{\lambda(0,0)}[2]$.
\end{corollary}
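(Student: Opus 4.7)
The plan is to argue by contradiction, reading the right-hand side of the stated inequality as the full $2$-torsion of the source curve $E_{\lambda(a_0,b_0)}$. Suppose $\ker(\varphi_{a_1,b_1}\circ\varphi_{a_0,b_0}) = E_{\lambda(a_0,b_0)}[2]$, and write $\lambda_0 = \lambda(a_0,b_0)$, $\lambda_1 = \lambda(a_1,b_1)$. Since $(a_i,b_i)\in V(\mathbb{F}_q)$, each $\lambda_i\in \mathbb{F}_q\setminus\{0,1\}$, so both $E_{\lambda_i}[2] = \{O,(0,0),(1,0),(\lambda_i,0)\}$ consist of four distinct points. Under our assumption, the points $(1,0)$ and $(\lambda_0,0)$ of $E_{\lambda_0}$ must then lie in the kernel of the composition.

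The core computation is to pin down $\varphi_{a_0,b_0}(E_{\lambda_0}[2])$ inside $E_{\lambda_1}$. By Lemma \ref{lem:theisog}(3) the dual isogeny $\widehat{\varphi_{a_0,b_0}}$ has kernel $\langle(1,0)\rangle \subset E_{\lambda_1}$. Combining the defining identity $\widehat\varphi\circ \varphi = [2]$ with a degree count — both $\ker(\widehat{\varphi_{a_0,b_0}})$ and $\varphi_{a_0,b_0}(E_{\lambda_0}[2])$ have order $2$, since $(0,0)\in \ker(\varphi_{a_0,b_0})\cap E_{\lambda_0}[2]$ — yields
\begin{equation*}
\varphi_{a_0,b_0}(E_{\lambda_0}[2]) \;=\; \ker\bigl(\widehat{\varphi_{a_0,b_0}}\bigr) \;=\; \{O,(1,0)\}\subset E_{\lambda_1}.
\end{equation*}
Because $\ker(\varphi_{a_0,b_0}) = \langle(0,0)\rangle$ contains neither $(1,0)$ nor $(\lambda_0,0)$ of $E_{\lambda_0}$, both of these two points necessarily map to the nonzero element $(1,0)\in E_{\lambda_1}$.

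The final step is then immediate: for the composition's kernel to contain those points, we would need $(1,0) \in \ker(\varphi_{a_1,b_1}) = \langle(0,0)\rangle \subset E_{\lambda_1}$. But $\lambda_1 \neq 1$ keeps $(1,0)$ and $(0,0)$ distinct as nonzero elements of $E_{\lambda_1}[2]$, so this membership fails, contradicting our assumption. The only subtlety, and the place I expect to double-check, is being scrupulous about which curve each $2$-torsion point lives in when invoking Lemma \ref{lem:theisog}(3); beyond that bookkeeping, no new machinery is required.
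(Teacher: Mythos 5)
Your proof is correct and rests on the same essential ingredient as the paper's: the kernel mismatch $\ker(\varphi_{a_1,b_1})=\langle(0,0)\rangle \neq \langle(1,0)\rangle = \ker(\hat\varphi_{a_0,b_0})$ from Lemma~\ref{lem:theisog}. The paper argues one level up — if the composite kernel were all of $E_{\lambda_0}[2]$, then $\varphi_{a_1,b_1}$ would be $\hat\varphi_{a_0,b_0}$ post-composed with an isomorphism, forcing equal kernels — whereas you unroll this to explicit point-chasing: $(1,0)\in E_{\lambda_0}$ maps to $(1,0)\in E_{\lambda_1}$, which is not killed by $\varphi_{a_1,b_1}$. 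Your version is marginally more elementary (it sidesteps the paper's shorthand ``$\varphi_{a_1,b_1}=\pm\hat\varphi_{a_0,b_0}$,'' which strictly should be ``equal up to an isomorphism of the target''), but it is the same argument in substance.
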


\begin{proof}
    If this were the case, then $\varphi_{a_1,b_1}=\pm \hat\varphi_{a_0,b_0}$, but this is not true, since $\ker (\varphi_{a_1,b_1})=\langle (0,0)\rangle\neq\langle(1,0)\rangle=\ker( \hat\varphi_{a_0,b_0})$.
\end{proof}

It will be useful in Section \ref{sec:taxonomy} to obtain information on how to go ``backwards" in the AGM graph, and to this end we introduce the following lemma.

\begin{lemma}\label{lem:edgecriterion}
    Let $(a,b),(a',b')\in V(\FF_q)$. Then there is an edge from $(a,b)$ to $(a',b')$ if and only if $(x,y,x',y')=(a,b,a',b')$ is a solution to the system of polynomial equations:
    \begin{equation*}
        \begin{cases}
            x^2-2xx'+(y')^2=0, \\
            y^2-2yx'+(y')^2=0.
        \end{cases}
    \end{equation*}
\end{lemma}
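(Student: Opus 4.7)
My plan is to verify both directions by elementary polynomial algebra, using Definition \ref{def:AGM} to unpack what an edge $(a,b)\to(a',b')$ means: namely $a' = (a+b)/2$ and $(b')^2 = ab$.

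For the forward direction I would simply substitute. With $2a' = a+b$ and $(b')^2 = ab$, the first equation becomes $a^2 - a(a+b) + ab = 0$ and the second becomes $b^2 - b(a+b) + ab = 0$, both of which vanish identically.

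For the converse, I would subtract the second equation from the first to obtain $a^2 - b^2 - 2a'(a-b) = (a-b)(a+b-2a') = 0$. The hypothesis $(a,b) \in V(\FF_q)$ forces $a \neq b$, so I can cancel the factor $a-b$ and conclude $a' = (a+b)/2$. Plugging this back into the first equation yields $(b')^2 = 2aa' - a^2 = a(a+b) - a^2 = ab$. Thus $(a',b')$ has precisely the form required by Definition \ref{def:AGM} for an AGM image of $(a,b)$; moreover the relation $(b')^2 = ab$ together with $a,b \neq 0$ shows that $ab$ is a nonzero square, which is exactly the case in which $\AGM(a,b)$ is nonempty.

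There is no real obstacle here --- the argument is routine polynomial manipulation. The only subtle point worth flagging is that the vertex-set condition $(a,b)\in V(\FF_q)$ enters in exactly one place, to allow division by $a-b$ in the converse direction; without it, the common factor would not cancel and the system would admit spurious solutions.
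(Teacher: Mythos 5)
Your proof is correct and amounts to the same argument the paper makes by citing Vieta's formulas for the quadratic $t^2 - 2a't + (b')^2$: the forward direction is direct substitution, and the converse unwinds Vieta by subtracting the two equations and cancelling $a-b$, which the vertex condition $a \neq \pm b$ licenses. You have simply written out the one-line Vieta argument in full, including the (correct) observation of exactly where the hypothesis $(a,b) \in V(\FF_q)$ is needed.
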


\begin{proof}
    We have $a+b=2a'$ and $ab=(b')^2$, so the result follows by Vieta's formulas.
\end{proof}

\begin{corollary}\label{cor:structureofpreim}
    Let $(a',b')\in V(\FF_q)$. Then there exists $(a,b)$ with $(a',b')\in \AGM(a,b)$ if and only if $(a')^2-(b')^2\in \FF_q^{\times 2}$. If this is the case, then
    \begin{equation*}
        a,b=a'+\sqrt{(a')^2-(b')^2},a'-\sqrt{(a')^2-(b')^2}
    \end{equation*}
    in some order. Equivalently, $(a,b,a',b')$ solves the polynomial equation from Lemma \ref{lem:edgecriterion}.
\end{corollary}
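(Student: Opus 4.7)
The plan is to deduce this directly from Lemma \ref{lem:edgecriterion}. The two equations $x^2 - 2xx' + (y')^2 = 0$ and $y^2 - 2yx' + (y')^2 = 0$ in that lemma are just the assertion that $x$ and $y$ are both roots of the single quadratic
\begin{equation*}
    t^2 - 2a' t + (b')^2 = 0
\end{equation*}
over $\FF_q$, so I would first observe that the existence of a preimage $(a,b)$ is equivalent to this quadratic having its (two, counted with multiplicity) roots lying in $\FF_q$.

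The next step is the quadratic formula: the quadratic above splits over $\FF_q$ if and only if its discriminant $4\bigl((a')^2-(b')^2\bigr)$ is a square. Since $(a',b')\in V(\FF_q)$ forces $a'\neq\pm b'$, the discriminant is nonzero, so the condition becomes $(a')^2-(b')^2\in\FF_q^{\times 2}$; moreover the two roots are exactly $a'\pm\sqrt{(a')^2-(b')^2}$, which (in some order) must be $a$ and $b$. This immediately gives both directions of the equivalence and the stated formula for $(a,b)$.

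The only thing left to check, assuming one wants to confirm $(a,b)$ actually lies in $V(\FF_q)$ rather than just in $\FF_q^2$, is that $a,b\neq 0$ and $a\neq\pm b$. I would dispose of this by reading off Vieta: $a=0$ (or $b=0$) would force the constant term $(b')^2=0$, contradicting $b'\neq 0$; $a=b$ would force the discriminant to vanish, contradicting $a'\neq\pm b'$; and $a=-b$ would give $2a'=a+b=0$, contradicting $a'\neq 0$. All three failure modes are excluded by $(a',b')\in V(\FF_q)$.

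There is no real obstacle here, this is a brief unpacking of Lemma \ref{lem:edgecriterion} via the quadratic formula; the only mild subtlety is the bookkeeping in the last paragraph to ensure the preimage lies in $V(\FF_q)$ and not merely in $\FF_q^2$.
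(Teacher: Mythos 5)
Your proof is correct and follows essentially the same route as the paper's: both reduce via Lemma~\ref{lem:edgecriterion} to observing that $a,b$ are the roots of $t^2-2a't+(b')^2=0$ and then apply the quadratic formula, reading off that the preimage exists iff the discriminant $4((a')^2-(b')^2)$ is a nonzero square. Your final paragraph, checking via Vieta that the resulting $(a,b)$ actually lands in $V(\FF_q)$ (so $a,b\neq 0$, $a\neq\pm b$) rather than merely in $\FF_q^2$, is a small piece of diligence that the paper's terse proof omits; it is a welcome addition, since Lemma~\ref{lem:edgecriterion} is stated only for $(a,b)\in V(\FF_q)$.
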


\begin{proof}
    By Lemma \ref{lem:edgecriterion}, $a,b$ must be roots of the polynomial
    \begin{equation*}
        x^2-2(a')x+(b')^2=0.
    \end{equation*}
    By the quadratic formula, we obtain
    \begin{equation*}
        a,b=\frac{2a'\pm\sqrt{(-2a')^2-4(b')^2}}{2}=a'\pm\sqrt{(a')^2-(b')^2},
    \end{equation*}
    which proves both claims.
\end{proof}
We will also briefly recall some facts on isogenies between elliptic curves with complex multiplication. 
\begin{lemma}[\cite{isogenyvolcanoes}, Sections 2.7 and 2.9] \label{lem:cmtheory}

Let $\ell \neq p$ be a prime, and let $E_1$ and $E_2$ be elliptic curves over $\mathbb{F}_q$, where $q = p^r$. Now let $\varphi\colon E_1\to E_2$ by an $\ell$-isogeny (isogeny of degree $\ell$) of elliptic curves with CM by imaginary quadratic orders $\O_1$ and $\O_2$ respectively (i.e. $\End_{\bar{\FF}_q}(E)=\O_1$). Then $\O_1=\Z+\tau_1\Z$ and $\O_2=\Z+\tau_2\Z$, for some $\tau_1,\tau_2\in\H$.
The isogeny $\hat\varphi\circ\tau_2\circ\varphi$ lies in $\End_{\overline{\mathbb{F}}_q}(E_1)$, and this implies that $\ell\tau_2\in\O_1$;
similarly, $\ell\tau_1\in\O_2$.
There are thus three possibilities:
\begin{enumerate}
\item $\O_1=\O_2$, in which case $\varphi$ is called \emph{horizontal};
\item $[\O_1:\O_2] = \ell$, in which case $\varphi$ is called \emph{descending};
\item $[\O_2:\O_1] = \ell$, in which case $\varphi$ is called \emph{ascending}.
\end{enumerate}
The orders $\O_1$ and $\O_2$ necessarily have the same fraction field $K=\End^0(E_1)=\End^0(E_2)$, and both lie in the maximal order $\O_K$, the ring of integers of $K$.

Every horizontal $\ell$-isogeny $\varphi$ arises from the action of an invertible $\O$-ideal $\fl$ of norm $\ell$, namely, the ideal of endomorphisms $\alpha\in\O$ whose kernels contain the kernel of $\varphi$.
If $\ell$ divides the index of $\O$ in the maximal order~$\O_K$ of its fraction field $K$, then no such ideals exist.
Otherwise we say that~$\O$ is \emph{maximal at} $\ell$, and there are then exactly
\[
1 + \left(\frac{\mathrm{disc}(K)}{\ell}\right) = \begin{cases}
0\qquad\text{$\ell$ is inert in $K$},\\
1\qquad\text{$\ell$ is ramified in $K$},\\
2\qquad\text{$\ell$ splits in $K$},
\end{cases}
\]
invertible $\O$-ideals of norm $\ell$, each of which gives rise to a horizontal $\ell$-isogeny.
In the split case we have $(\ell)=\fl\cdot\bar\fl$, and the $\fl$-orbits partition $\Ell_\O(k)$ into cycles corresponding to the cosets of $\langle[\fl]\rangle$ in the class group $\cl(\O)$.
\end{lemma}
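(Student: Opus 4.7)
The plan is to establish each of the three structural assertions and the counting formula by exploiting the canonical identification of endomorphism algebras induced by $\varphi$, and then invoking the Deuring correspondence between CM elliptic curves and invertible ideals in imaginary quadratic orders.

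First I would prove the containments $\ell\tau_2 \in \O_1$ and $\ell\tau_1 \in \O_2$, and derive the trichotomy from them. The isogeny $\varphi$ induces a canonical identification $\End^0(E_1) \cong \End^0(E_2) = K$: for any $\alpha \in \End(E_2)$, the pullback $\hat\varphi \circ \alpha \circ \varphi$ lies in $\End(E_1)$, and since $\hat\varphi \circ \varphi = [\ell]$, this pullback corresponds to $\ell\alpha$ under the identification. Hence $\ell\O_2 \subseteq \O_1$, and symmetrically $\ell\O_1 \subseteq \O_2$. Writing $\O_i = \ZZ + f_i\O_K$ with conductors $f_i$, these containments translate to $f_1 \mid \ell f_2$ and $f_2 \mid \ell f_1$; a short gcd argument then forces $f_1/f_2 \in \{1, \ell, 1/\ell\}$, yielding exactly the three cases of horizontal, descending, and ascending isogenies.

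Next I would address the correspondence between horizontal $\ell$-isogenies and invertible $\O$-ideals of norm $\ell$ via the Deuring correspondence: the class group $\cl(\O)$ acts freely and transitively on $\Ell_\O(\bar{\FF}_q)$, with $[\mathfrak{a}]$ acting by sending $E$ to the quotient $E/E[\mathfrak{a}]$ through the isogeny whose kernel is the $\mathfrak{a}$-torsion (of degree $N(\mathfrak{a})$). Horizontal $\ell$-isogenies out of $E$ correspond precisely to invertible $\O$-ideals of norm $\ell$. If $\O$ is maximal at $\ell$, these are the prime ideals above $\ell$, and elementary algebraic number theory gives the count $1 + (\disc(K)/\ell)$ (zero if $\ell$ is inert, one if ramified, two if split); if $\ell$ divides the conductor, $\ell$ is not invertible in $\O$ and no invertible ideals of norm $\ell$ exist. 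The orbit description in the split case is then immediate: for a free transitive action of a group on a set, the orbits of any subgroup are in bijection with its cosets, each of size equal to the order of the subgroup.

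The main obstacle is that the Deuring correspondence and the free-transitive action are themselves deep theorems, requiring lifting ordinary CM curves from characteristic $p$ to characteristic zero and transporting the $\cl(\O)$-action through good reduction. A self-contained proof from scratch would be disproportionate to the rest of the paper, so in practice I would cite the standard references (e.g., Silverman's \emph{Advanced Topics} or Cox's \emph{Primes of the Form $x^2+ny^2$}) for this machinery and devote the proof text only to verifying the trichotomy and the ideal-counting step carefully.
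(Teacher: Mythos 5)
This lemma is not proved in the paper: it is imported verbatim from Sutherland's ``Isogeny volcanoes'' (Sections 2.7 and 2.9), so there is no in-paper argument to compare against. Your sketch is a correct outline of the standard CM machinery and is how one would actually prove the statement. The trichotomy argument is right: $\ell\O_2\subseteq\O_1$ and $\ell\O_1\subseteq\O_2$ give $f_1\mid\ell f_2$ and $f_2\mid\ell f_1$ on conductors, and a gcd argument (writing $f_1=ga$, $f_2=gb$ with $\gcd(a,b)=1$, then $a\mid\ell$, $b\mid\ell$, and not both equal to $\ell$) forces $f_1/f_2\in\{1,\ell,1/\ell\}$. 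The Deuring correspondence and the ideal-norm count are the right tools, and you appropriately flag that a self-contained development is out of scope.

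One small point to tighten: the final sentence of the lemma concerns $\Ell_\O(k)$ for $k=\FF_q$, not $\bar\FF_q$. You invoke the free transitive action of $\cl(\O)$ on $\Ell_\O(\bar\FF_q)$; the orbit count for $\Ell_\O(\FF_q)$ requires the refinement (due to Deuring and Waterhouse) that for ordinary curves over $\FF_q$ within a fixed isogeny class with endomorphism ring $\O$, the action of $\cl(\O)$ on the $\FF_q$-isomorphism classes is already free and transitive. With that version cited, the coset argument goes through exactly as you describe.
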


The following lemma is necessary for the result in Proposition \ref{prop:cycles}. 

\begin{lemma}[\cite{isogenyvolcanoes}, Lemma 6]\label{lem:ascending}
Let $E'/k$ be an elliptic curve with CM by $\O'$, where $\O'$ is the unique order of index $\ell$ in an order $\O$. Then there is a unique ascending $\ell$-isogeny from $E'$ to an elliptic curve $E/k$ with CM by $\O$. 
\end{lemma}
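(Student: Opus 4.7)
My plan is to enumerate the $\ell$-isogenies from $E'$, which correspond to the $\ell+1$ cyclic subgroups of $E'[\ell]$ (since $\ell\neq p$), and identify those that are ascending. The first step is to rule out horizontal $\ell$-isogenies: since $\O'\subsetneq\O\subseteq\O_K$, we have $\ell\mid[\O_K:\O']$, so $\O'$ is not maximal at $\ell$. By the classification recalled in Lemma~\ref{lem:cmtheory}, no horizontal $\ell$-isogeny originates from $E'$, so every $\ell$-isogeny from $E'$ is either ascending or descending, and it suffices to produce exactly one ascending kernel.

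For existence, I would use the ideal/lattice description of CM elliptic curves. Lifting $E'$ to characteristic zero via Deuring's theorem, write $E'\cong\CC/\Lambda$ where $\Lambda$ is an invertible proper $\O'$-ideal. Set $M:=\O\Lambda$; this is an invertible proper $\O$-ideal containing $\Lambda$, with $[M:\Lambda]=[\O:\O']=\ell$. The natural projection $\CC/\Lambda\to\CC/M$ is then an $\ell$-isogeny to an elliptic curve with endomorphism ring exactly $\O$, and reducing modulo the chosen prime produces an ascending $\ell$-isogeny $E'\to E$ over $k$.

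For uniqueness, suppose $\psi\colon E'\to E^{\ast}$ is any ascending $\ell$-isogeny and let $\O^{\ast}=\End(E^{\ast})$. Since $\psi$ has degree $\ell$ and is ascending, $\O'\subsetneq\O^{\ast}$ with $[\O^{\ast}:\O']$ a divisor of $\ell$, hence equal to $\ell$. By the assumed uniqueness of the order of index $\ell$ over $\O'$, we conclude $\O^{\ast}=\O$. In the lattice picture, $\ker\psi$ corresponds to a lattice $M^{\ast}\supsetneq\Lambda$ of index $\ell$ that is a proper $\O$-ideal; any $\O$-module containing $\Lambda$ contains $\O\Lambda=M$, and equality of indices forces $M^{\ast}=M$. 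Thus the kernel, and hence the isogeny, is unique up to isomorphism of the target. The main technical obstacle is transporting the lattice argument to finite characteristic: one must either invoke Deuring lifting and verify that the constructed isogeny reduces correctly, or argue directly by analyzing the $\O'/\ell\O'$-module structure on $E'[\ell]$ and singling out the unique $\O$-stable line inside it.
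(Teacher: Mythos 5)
The paper does not prove this lemma; it is cited verbatim from Sutherland's isogeny-volcanoes paper, so there is no in-text argument to compare against. Your outline is the standard CM-theoretic proof. The three ingredients you use are all sound: ruling out horizontal $\ell$-isogenies from the non-maximality of $\O'$ at $\ell$ (since $\ell\mid[\O_K:\O']$), the lattice picture in which $M=\O\Lambda$ is the unique proper $\O$-ideal of index $\ell$ containing $\Lambda$ (any $\O$-stable sublattice of index $\ell$ containing $\Lambda$ must contain $\O\Lambda$ and equality of indices forces equality), and the identification $\O^{\ast}=\O$ from the fact that an imaginary quadratic order has at most one over-order of index $\ell$. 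The ``technical obstacle'' you flag is genuine and is exactly where the content lies: you need Deuring's correspondence, namely that reduction modulo $\P$ gives a bijection between characteristic-zero CM curves with endomorphism ring $\O'$ and ordinary curves over $\overline{\FF}_p$ with that same endomorphism ring, compatible with isogenies, so that both existence and uniqueness transport faithfully between the two worlds. One further point worth making explicit, since the statement asserts the target is defined over $k$ and not merely $\overline{k}$: the set of order-$\ell$ subgroups $C\subset E'[\ell]$ for which $E'/C$ has CM by $\O$ is stable under $\gal(\overline{k}/k)$ (endomorphism rings over $\overline{k}$ are Galois-invariant), so the uniqueness you prove forces the kernel to be $k$-rational, which descends both $E$ and the ascending isogeny to $k$.
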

\begin{remark}
    In the case where $\O=\Z[i]$ or $\O=\Z[\zeta_3]$, there is only one elliptic curve $E/k$ with complex multiplication by $\O$ (and it has $j$-invariant 1728 or 0 respectively). Then there are multiple descending isogenies from $E$ to $E'$, but only one ascending isogeny, so the Lemma still holds.
\end{remark}

\begin{lemma}\label{lem:2splitsinendomorphismring}
    Let $q\equiv 1\mod 8$, $\sqrt{q}$ a $2$-adic square root of $q$, and $E/\FF_q$ be an ordinary elliptic curve with trace of Frobenius $s\equiv \pm46\sqrt{q}\pmod{128}$ and endomorphism ring $\mathcal{O}$. Then $2$ splits in $K=\text{Frac}(\mathcal O)$.
\end{lemma}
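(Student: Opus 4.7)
The plan is to reduce the splitting criterion for $2$ in $K$ to a congruence on $\disc(K)$ modulo $8$, and then to extract this congruence directly from the hypothesis $s\equiv \pm 46\sqrt{q}\pmod{128}$.

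The Frobenius endomorphism $\pi$ satisfies $\pi^2-s\pi+q=0$, so $K=\QQ(\pi)$ and the order $\ZZ[\pi]\subseteq\mathcal{O}_K$ has discriminant $s^2-4q$. Writing $c=[\mathcal{O}_K:\ZZ[\pi]]$, we obtain the identity $s^2-4q=c^2\disc(K)$. Since $2$ splits in $K$ if and only if $\disc(K)\equiv 1\pmod 8$, it suffices to establish this congruence.

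First I would compute $s^2-4q$ $2$-adically from the hypothesis. Because $q\equiv 1\pmod 8$, the element $\sqrt{q}$ is a unit in $\ZZ_2$. Writing $s=\pm 46\sqrt{q}+128t$ with $t\in\ZZ_2$ and expanding, the cross term $\pm 2\cdot 46\sqrt{q}\cdot 128t$ has $2$-adic valuation at least $9$ (using $v_2(46)=1$), and the quadratic term is divisible by $2^{14}$, so $s^2\equiv 2116q\pmod{2^9}$ and hence $s^2-4q\equiv 2112q\pmod{2^9}$. Since $2112=64\cdot 33$ and $q\equiv 1\pmod 8$, this gives $s^2-4q=64m$ for an integer $m$ with $m\equiv 33q\equiv 1\pmod 8$.

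Next I would split into cases on $v_2(\disc(K))$. For the fundamental discriminant of an imaginary quadratic field, $v_2(\disc(K))\in\{0,2,3\}$. Comparing valuations in $64m=c^2\disc(K)$ gives $2v_2(c)+v_2(\disc(K))=6$, whose right-hand side is even, ruling out $v_2(\disc(K))=3$. If $v_2(\disc(K))=0$, then $c=8c'$ with $c'$ odd, so $m=(c')^2\disc(K)$ and therefore $\disc(K)\equiv m\equiv 1\pmod 8$, which is the desired conclusion. If instead $v_2(\disc(K))=2$, write $\disc(K)=4d$ with $d\equiv 3\pmod 4$ (the alternative $d\equiv 2\pmod 4$ would force $v_2(\disc(K))=3$) and $c=4c'$ with $c'$ odd; then $m=(c')^2 d$ would force $d\equiv 1\pmod 8$, contradicting $d\equiv 3\pmod 4$, so this branch is impossible.

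The main obstacle is simply careful $2$-adic bookkeeping: the specific constant $46$ has been engineered so that $46^2-4q\equiv 64\pmod{512}$ whenever $q\equiv 1\pmod 8$, and the residue $33\equiv 1\pmod 8$ is exactly what is needed to make the conductor branch $v_2(\disc(K))=2$ contradictory while leaving the genuine split branch $v_2(\disc(K))=0$ compatible with the data.
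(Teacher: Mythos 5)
Your proof is correct. The core computation — showing that $s^2-4q=64m$ with $m\equiv 1\pmod 8$ — matches the paper exactly, but from that point you take a different route. You invoke the classical criterion that $2$ splits in an imaginary quadratic field $K$ if and only if $\disc(K)\equiv 1\pmod 8$, and then you run casework on $v_2(\disc(K))\in\{0,2,3\}$ through the conductor identity $s^2-4q=c^2\disc(K)$: the valuation forces $v_2(\disc(K))$ even, the mod-$8$ constraint rules out the $v_2(\disc(K))=2$ branch, and the $v_2(\disc(K))=0$ branch hands you the congruence directly. The paper instead observes that $2$ splits if and only if the minimal polynomial of Frobenius factors over $\QQ_2$, i.e.\ if and only if $s^2-4q$ is a square in $\QQ_2$, and then notes in one stroke that $64m=8^2m$ with $m\equiv 1\pmod 8$ is visibly a $2$-adic square. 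The two routes are logically equivalent (the $\disc(K)\equiv 1\pmod 8$ test is exactly the fundamental-discriminant instance of the $\QQ_2$-square test), but the paper's version avoids separating out the conductor and is shorter; yours makes the valuation bookkeeping explicit, which some readers may find more transparent. Both are sound.
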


\begin{proof}
    Let $\pi$ be the Frobenius endomorphism on $E$. We know that $K=\QQ(\pi)$, so $2$ splits in $K$ if and only if $\QQ_2(\pi)$ is the trivial extension of $\QQ_2$. As $\pi$ has minimal polynomial $\pi^2-s\pi+q$, which has discriminant $D_\pi=s^2-4q$, $2$ splits in $K$ if and only if $\QQ_2(\sqrt{D_\pi})=\QQ_2(\pi)$ is the trivial extension of $\QQ_2$, or if $s^2-4q$ is a square in $\QQ_2$.

    By direct computation, we have
    \begin{align*}
        s^2-4q &\equiv (\pm46\sqrt{q})^2-4q\equiv 68q-4q\equiv 64q\pmod{512}.
    \end{align*}
    This implies $D_\pi=s^2-4q=8^2u$ where $u\equiv 1\mod 8$, so $D_\pi$ is a square in $\QQ_2$.
\end{proof}

\section{Pairs That Define Isomorphic Elliptic Curves}\label{sec:isoellipticcurves}

Given a pair $(a,b)\in V(\FF_q)$, an interesting question to ask is: what can we say about the combinatorics of other pairs $(c,d)\in V(\FF_q)$ such that $(a,b)$ and $(c,d)$ define isomorphic elliptic curves (over $\bar{\FF}_q$, via Definition \ref{def:lambda})? Let $\lambda=\lambda(a,b)$. Then $(c,d)$ defining an isomorphic elliptic curve is equivalent to
\begin{equation*}
    \lambda(c,d)\in \Lambda_\lambda.
\end{equation*}
These results are useful for the succeeding sections, as they provide a more precise understanding of the structure of the AGM graph. Using results like Lemmas \ref{lem:isooverFq} and \ref{lem:rivalscorresponding2tors}, we will obtain useful results about the structure of the connected component containing $(a,b)$.

\begin{lemma}\label{lem:cousins}
    Let $(a_0,b_0)$ be a pair with children $(a_0,b_0)\mapsto (a_1,b_1)\mapsto (a_2,b_2)$ and $(a_0,b_0)\mapsto (a_1',b_1')\mapsto (a_2',b_2')$ with $(a_1,b_1)\neq (a_1',b_1')$, under the AGM map. Then
    \begin{equation*}
        \lambda(a_2,b_2)=\frac{\lambda(a_2',b_2')}{\lambda(a_2',b_2')-1}.
    \end{equation*}
\end{lemma}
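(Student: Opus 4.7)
The plan is a direct computation. First I would observe that since $(a_1,b_1)$ and $(a_1',b_1')$ are distinct elements of $\AGM(a_0,b_0)$, they must correspond to the two sign choices in Definition \ref{def:AGM}, giving
\begin{equation*}
a_1' = a_1 = \frac{a_0+b_0}{2}, \qquad b_1' = -b_1.
\end{equation*}
Thus $(a_1',b_1') = (a_1, -b_1)$, and the grandchildren $(a_2,b_2)$ and $(a_2',b_2')$ are obtained by applying one more AGM step to the pair $(a_1,b_1)$ and to its sign-flipped twin $(a_1,-b_1)$ respectively.

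Next I would compute both $\lambda$ values explicitly using $a_2 = (a_1+b_1)/2$ with $b_2^2 = a_1 b_1$, and $a_2' = (a_1-b_1)/2$ with $(b_2')^2 = -a_1 b_1$. A short calculation yields
\begin{equation*}
\lambda(a_2,b_2) = \frac{4 a_1 b_1}{(a_1+b_1)^2}, \qquad \lambda(a_2',b_2') = \frac{-4 a_1 b_1}{(a_1-b_1)^2}.
\end{equation*}

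The final step is purely algebraic: substituting into $\lambda(a_2',b_2')/(\lambda(a_2',b_2') - 1)$ and simplifying the denominator via the identity $-4 a_1 b_1 - (a_1-b_1)^2 = -(a_1+b_1)^2$ produces exactly $\lambda(a_2,b_2)$. I do not expect any real obstacle here: the assumption that $(a_2,b_2), (a_2',b_2') \in V(\FF_q)$ forces $a_1 \neq \pm b_1$ (otherwise $a_2 = \pm b_2$ or $a_2' = \pm b_2'$), so none of the denominators vanish. It is also worth noting that this is consistent with Lemma \ref{lem:rivalscorresponding2tors}: the involution $\mu \mapsto \mu/(\mu-1)$ corresponds to the transposition $(02)$ in the $S_3$-action on $\Lambda_\mu$, reflecting the fact that the cousin grandchildren lie on $\bar\FF_q$-isomorphic curves.
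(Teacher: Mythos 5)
Your proposal is correct and follows essentially the same route as the paper: identify $(a_1',b_1')=(a_1,-b_1)$, compute $\lambda(a_2,b_2)=\tfrac{4a_1b_1}{(a_1+b_1)^2}$ and $\lambda(a_2',b_2')=\tfrac{-4a_1b_1}{(a_1-b_1)^2}$, and verify the identity algebraically. The only cosmetic difference is that the paper phrases the final step as $\tfrac1\lambda+\tfrac1{\lambda'}=1$, whereas you substitute directly; these are equivalent.
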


\begin{proof}
    We introduce the notation $(a_1,b_1)=(a,b)$ and $(a_1',b_1')=(a,-b)$. Then
    \begin{equation*}
        (a_2,b_2)=\left(\frac{a+b}2,\pm\sqrt{ab}\right), \qquad {\rm and}\qquad(a_2',b_2')=\left(\frac{a-b}{2},\pm i\sqrt{ab}\right).
    \end{equation*}
    We compute $\lambda=\lambda(a_2,b_2)=\frac{4ab}{(a+b)^2}$ and $\lambda'=\lambda(a_2',b_2')=\frac{-4ab}{(a-b)^2}$. Then by direct computation we obtain
    \begin{equation*}
        \frac1\lambda+\frac1{\lambda'}=1,
    \end{equation*}
    implying that $\lambda=\lambda'/(\lambda'-1)$.
\end{proof}

\begin{lemma}\label{rivalsexistence}
    Suppose $q\equiv 1\pmod 4$ and let $(a,b)\in V(\FF_q)$ be a pair that has a parent. Let $\lambda=\lambda(a,b)$ and let $\lambda'\in \Lambda_\lambda$. Then there exist pairs $(c,d)\in V(\FF_q)$ with $\lambda(c,d)=\lambda'$, and moreover these $(c,d)$ have parents. 
\end{lemma}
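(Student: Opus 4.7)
The plan is to reduce both conclusions (existence of $(c,d)$ with $\lambda(c,d)=\lambda'$, and the fact that such $(c,d)$ have parents) to the single assertion that both $\lambda'$ and $1-\lambda'$ are squares in $\FF_q^\times$. Once this is established, a direct construction combined with Corollary \ref{cor:structureofpreim} finishes the proof.

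The first step is to translate the hypothesis that $(a,b)$ has a parent into a statement about $\lambda=\lambda(a,b)$. By Corollary \ref{cor:structureofpreim}, $(a,b)$ has a parent if and only if $a^2-b^2$ is a nonzero square. Writing $a^2-b^2=a^2(1-\lambda)$ and noting that $a^2\in \FF_q^{\times 2}$, this is equivalent to $1-\lambda\in \FF_q^{\times 2}$. Moreover $\lambda=(b/a)^2$ is automatically a nonzero square. Setting $\alpha=b/a$, the condition $(a,b)\in V(\FF_q)$ forces $\alpha\neq 0,\pm 1$; combined with $i\in \FF_q$ (since $q\equiv 1\pmod 4$) and $1-\alpha^2=1-\lambda\in \FF_q^{\times 2}$, this is exactly the hypothesis of Lemma \ref{lem:isooverFq}. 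Applying part (1) of that lemma yields $\Lambda_\lambda\subseteq \FF_q^{\times 2}$.

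Next, I observe that the set $\Lambda_\lambda$ is closed under the involution $x\mapsto 1-x$; a direct inspection of the six listed elements confirms this (e.g.\ $1-(1/\lambda)=(\lambda-1)/\lambda$, $1-(1/(1-\lambda))=\lambda/(\lambda-1)$, etc.). Consequently, for any $\lambda'\in \Lambda_\lambda$, both $\lambda'$ and $1-\lambda'$ lie in $\Lambda_\lambda$ and are therefore squares in $\FF_q^\times$.

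Finally, given $\lambda'\in \Lambda_\lambda$, pick any $\mu\in \FF_q^\times$ with $\mu^2=\lambda'$ and set $(c,d):=(1,\mu)$. Since $\lambda'\in \FF_q\setminus\{0,1\}$, we have $\mu\neq 0,\pm 1$, so $(c,d)\in V(\FF_q)$ with $\lambda(c,d)=\lambda'$; by Lemma \ref{lem:theisog}(1) there are in fact $2(q-1)$ such pairs. For any such pair, $c^2-d^2=c^2(1-\lambda')$ is a nonzero square, so Corollary \ref{cor:structureofpreim} gives that $(c,d)$ admits a parent. The argument is essentially bookkeeping; the only substantive point is recognizing that the parent condition, together with $q\equiv 1\pmod 4$, is exactly what is needed to invoke Lemma \ref{lem:isooverFq}, and that the resulting conclusion $\Lambda_\lambda\subseteq \FF_q^{\times 2}$ automatically gives both $\lambda'$ and $1-\lambda'$ as squares.
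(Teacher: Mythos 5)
Your proof is correct and follows essentially the same route as the paper: translate the parent condition to $1-\lambda\in\FF_q^{\times 2}$ via Corollary \ref{cor:structureofpreim}, invoke Lemma \ref{lem:isooverFq}(1) to get $\Lambda_\lambda\subseteq\FF_q^{\times 2}$, and then use that $\Lambda_\lambda$ is closed under $x\mapsto 1-x$ to conclude that any $(c,d)$ with $\lambda(c,d)=\lambda'$ has a parent. Your version is slightly more explicit about verifying $(c,d)\in V(\FF_q)$ and about the closure of $\Lambda_\lambda$ under $1-x$, but the substance is identical.
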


\begin{proof}
    This is essentially a restatement of Lemma \ref{lem:isooverFq} (1). First let $\alpha=b/a$. if $(a,b)$ has a parent, then by Corollary \ref{cor:structureofpreim}, $a^2-b^2$ is a square, and in particular $(a^2-b^2)/a^2=1-\alpha^2$ is a square, which implies all elements $\lambda'\in \Lambda_\lambda$ are square. The pair $(1,\sqrt{\lambda'})\in V(\FF_q)$ then has $\lambda(1,\sqrt{\lambda'})=\lambda'$.

    Now to show that such $(c,d)$ have parents, note that $\lambda(c,d)\in \Lambda_\lambda$, and for any $\lambda'\in \Lambda$ we know that $1-\lambda'\in \Lambda_\lambda$. This implies $1-\lambda(c,d)=1-d^2/c^2=(c^2-d^2)/c^2$ is a square, so $(c,d)$ has a parent by Corollary \ref{cor:structureofpreim}.
\end{proof}

The following lemma will be used in Section \ref{sec:idealclasses} as a crucial step in classifying the ``limiting behavior" of the component containing a pair $(a,b)$ as we pass to larger extensions of $\FF_q$.

We now discuss a Lemma that considers the inclusion $A(\FF_q)\subseteq A(\FF_{q^2})$, which will be necessary for \ref{lem:rivalsconnected}.

\begin{lemma}\label{lem:growthbyonestage}
    Let $(a,b)\in V(\FF_q)$, then $(a,b)$ has both a parent and a child in $V(\FF_{q^2})$.
\end{lemma}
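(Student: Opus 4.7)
The plan is to reduce the lemma to one elementary fact: every element of $\FF_q^\times$ becomes a square after base change to $\FF_{q^2}$. By Definition \ref{def:AGM}, $(a,b)$ admits a child in $V(\FF_{q^2})$ precisely when $ab$ is a square in $\FF_{q^2}$, and by Corollary \ref{cor:structureofpreim}, $(a,b)$ admits a parent in $V(\FF_{q^2})$ precisely when $a^2-b^2$ is a square in $\FF_{q^2}$. The conditions $a,b\neq 0$ and $a\neq\pm b$ built into $V(\FF_q)$ guarantee $ab, a^2-b^2\in \FF_q^\times$, so both questions collapse to the single squareness statement above.

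To verify that statement, observe that $\FF_{q^2}^\times$ is cyclic of order $q^2-1=(q-1)(q+1)$, and its squares form the unique index-$2$ subgroup, characterized by $x^{(q^2-1)/2}=1$. For any $x\in \FF_q^\times$, Fermat gives $x^{q-1}=1$, and since $q$ is odd the exponent $(q+1)/2$ is a positive integer, so
\begin{equation*}
    x^{(q^2-1)/2} = \left(x^{q-1}\right)^{(q+1)/2} = 1,
\end{equation*}
placing $x$ in the subgroup of squares of $\FF_{q^2}^\times$.

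Finally, I would check that the resulting child and parent actually lie in $V(\FF_{q^2})$, i.e., that their coordinates are nonzero and not equal up to sign. For the child $\bigl((a+b)/2,\pm\sqrt{ab}\bigr)$, the first coordinate is nonzero since $a\neq -b$ and the second since $ab\neq 0$; equality up to sign would force $(a+b)^2=4ab$, i.e., $a=b$, contradicting $(a,b)\in V(\FF_q)$. For a parent $(c,d)$ with $c+d=2a$ and $cd=b^2$ (as in Corollary \ref{cor:structureofpreim}), nonvanishing follows from $cd\neq 0$, $c\neq -d$ follows from $c+d=2a\neq 0$, and $c\neq d$ follows from $a^2-b^2\neq 0$. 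There is no substantive obstacle in the argument: the full content of the lemma is the embedding $\FF_q^\times \hookrightarrow (\FF_{q^2}^\times)^2$, valid for every odd prime power $q$.
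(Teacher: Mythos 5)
The proposal is correct and follows the same route as the paper: reduce both the child and parent conditions to the single fact that every element of $\FF_q^\times$ becomes a square in $\FF_{q^2}$. You are a bit more thorough than the paper's one-sentence proof — you supply the cyclic-group argument for the squareness fact and explicitly verify that the resulting vertices satisfy the nondegeneracy constraints defining $V(\FF_{q^2})$ — but the underlying idea is identical.
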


\begin{proof}
    We know that $(a,b)$ has a child (resp. parent) in $V(\FF_{q^2})$ if $ab$ (resp. $\sqrt{a^2-b^2}$) is a square. But every element of $\FF_q$ is a square in $\FF_{q^2}$, so $ab$ (resp. $\sqrt{a^2-b^2}$) is a square in $\FF_{q^2}$ and we obtain the result. 
\end{proof}

\begin{lemma}\label{lem:rivalsconnected}
    Let $(a,b)\in V(\FF_q)$, and $\lambda=\lambda(a,b)$. Then for all $\lambda' \in \Lambda_\lambda$, there exist $(c,d)\in V(\FF_{q^2})$ such that $\lambda(c,d) = \lambda'$, and $(a,b)$ and $(c,d)$ are connected (as an undirected graph) in $A(\FF_{q^4})$.
\end{lemma}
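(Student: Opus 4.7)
The plan is to view $\Lambda_\lambda$ as the orbit of $\lambda$ under the anharmonic $S_3$-action and to realize each orbit element as the $\lambda$-value of a vertex connected to $(a,b)$ via a short undirected path in $A(\FF_{q^4})$. Since the two involutions $\mu\mapsto 1/\mu$ and $\mu\mapsto \mu/(\mu-1)$ generate $S_3$, it suffices to realize each as a ``path move'' in $A(\FF_{q^4})$ and then compose them at most twice.

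For the first move, Lemma \ref{lem:growthbyonestage} guarantees that $\bigl((a+b)/2,\sqrt{ab}\bigr)\in V(\FF_{q^2})$ is a common child of $(a,b)$ and $(b,a)$, yielding an undirected $2$-path between them with $\lambda(b,a)=1/\lambda$. For the second move, I would apply Lemma \ref{lem:growthbyonestage} twice to produce a grandparent $(a_{-2},b_{-2})\in V(\FF_{q^4})$ of $(a,b)$ through a parent $(a_{-1},b_{-1})\in V(\FF_{q^2})$, and take a child $(c,d)=\bigl((a_{-1}-b_{-1})/2,\pm\sqrt{-a_{-1}b_{-1}}\bigr)$ of the other grandparent-child $(a_{-1},-b_{-1})$. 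By Lemma \ref{lem:cousins} we get $\lambda(c,d)=\lambda/(\lambda-1)$; and because $a_{-1}b_{-1}=b^2\in\FF_q$ and $\sqrt{-1}\in\FF_{q^2}$, the pair $(c,d)$ actually lies in $V(\FF_{q^2})$. The whole $4$-path $(a,b)\leftarrow (a_{-1},b_{-1})\leftarrow (a_{-2},b_{-2})\to (a_{-1},-b_{-1})\to (c,d)$ is then contained in $V(\FF_{q^4})$.

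The remaining three values of $\Lambda_\lambda$ are obtained by composing these moves. Applying the first move to $(c,d)\in V(\FF_{q^2})$ connects it to the partner $(d,c)$ with $\lambda=(\lambda-1)/\lambda$ via a common child in $V(\FF_{q^4})$. Applying the second move instead starting from $(b,a)\in V(\FF_q)$ produces a vertex $(c',d')\in V(\FF_{q^2})$ with $\lambda=1/(1-\lambda)$ through a grandparent in $V(\FF_{q^4})$. Finally, applying the first move to $(c',d')$ connects it to a partner with $\lambda=1-\lambda$, again via a common child in $V(\FF_{q^4})$. (Degenerate $\lambda$ with $|\Lambda_\lambda|<6$ require only a subset of these steps.)

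The main obstacle is field tracking: the first move sends $V(\FF_{q^n})$ to $V(\FF_{q^n})$ through a child in $V(\FF_{q^{2n}})$, while the Cousins move requires a grandparent in $V(\FF_{q^{4n}})$. So the Cousins move can be used only once from a vertex of $V(\FF_q)$ if we want to remain inside $A(\FF_{q^4})$. The construction above is arranged so that the Cousins move is invoked only on vertices of $V(\FF_q)$, and the identification $-a_{-1}b_{-1}=-b^2$ forces the resulting cousin back down to $V(\FF_{q^2})$, leaving precisely enough room for one further application of the first move to stay in $A(\FF_{q^4})$.
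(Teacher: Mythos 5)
Your proof is correct and follows essentially the same route as the paper's: realize $1/\lambda$ by the "swap" $(a,b)\leftrightarrow (b,a)$ via a common child (your Move 1, the paper's step 1), realize $\lambda/(\lambda-1)$ by going up to a grandparent and down the other branch to a cousin (your Move 2 using Lemma \ref{lem:cousins}, the paper's step 2), and then compose these exactly as the paper does for the remaining three values $(\lambda-1)/\lambda$, $1/(1-\lambda)$, and $1-\lambda$. Your field-tracking — in particular the observation that $a_{-1}b_{-1}=b^2\in\FF_q$ so the cousin only needs $i\in\FF_{q^2}$ and $\sqrt{a^2-b^2}\in\FF_{q^2}$, landing it in $V(\FF_{q^2})$ rather than $V(\FF_{q^4})$ — is the same key point the paper checks, and is what makes the composed paths stay inside $A(\FF_{q^4})$. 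The framing in terms of the anharmonic $S_3$-action is a nice way to organize the bookkeeping, but the underlying argument is the one in the paper.
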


\begin{proof}\noindent We show the lemma in the following steps.
    \begin{enumerate}
    \item We first consider $\lambda'=1/\lambda$, which is obtained by $(c,d)=(b,a)$. These share the same children, so they are connected in $A(\FF_{q^2})$ by \ref{lem:growthbyonestage}.

    \item Next we consider $\lambda'=\lambda/(\lambda-1)$, and such a pair $(c,d)$ is connected to $(a,b)$ by a grandparent $(a'',b'')$ in in $A(\FF_{q^4})$ by Lemmas \ref{lem:cousins} and \ref{lem:growthbyonestage}. We note that this pair $(c,d)$ is defined over $\FF_{q^2}$: indeed, the parents of $(a,b)$, $(a',b')$ are defined over $\FF_{q^2}$, so the parents of this $(c,d)$ (which are $(a',-b')$) are defined over $\FF_{q^2}$. Then $(c,d)$ are defined over $\FF_{q^2}$ if $\sqrt{-a'b'}\in \FF_{q^2}$, which is true because $i\in \FF_{q^2}$ and $\sqrt{ab}\in \FF_{q^2}$.

    \begin{center}
        \begin{tikzcd}
        	{(a'',b'')\in V(\FF_{q^4})} \\
        	{(a',-b')\in V(\FF_{q^2})} & {(a',b')\in V(\FF_{q^2})} \\
        	{(c,d)\in V(\FF_{q^2})} & {(a,b)\in V(\FF_{q})}
        	\arrow[maps to, from=1-1, to=2-1]
        	\arrow[maps to, from=1-1, to=2-2]
        	\arrow[maps to, from=2-1, to=3-1]
        	\arrow[maps to, from=2-2, to=3-2]
        \end{tikzcd}
    \end{center}

    \item Now we consider $\lambda'=(\lambda-1)/\lambda$, which exists over $\FF_{q^2}$ and is connected to $(a,b)$ over $\FF_{q^4}$ by applying the argument in (1) to the pair in (2). (Note that we need that the pair in (2) is defined over $\FF_{q^2}$ to guarantee that it will be connected to this pair in $\FF_{q^4}$.)

    \item For $\lambda'=\frac1{1-\lambda}$, we apply the argument in (2) to the pair in (1). Since the pair in (1) is defined over $\FF_{q}$, this gives us a pair $(c,d)$ defined in $\FF_{q^2}$ that is connected to $(a,b)$ in $\FF_{q^4}$.

    \item For $\lambda'=1-\lambda$, we apply the argument in (1) to pair in (4). This follows similarly to (3), where we get a pair defined over $\FF_{q^2}$ that is connected to $(a,b)$ in $\FF_{q^4}$.
    \end{enumerate}
    The proof is now complete.
\end{proof}

\begin{definition}
    Let $P_0\in V(\mathbb{F}_q)$ be a point. Then the set $R(P_0) \subset V(\mathbb{F}_{q^2})$ is defined as a set of 24 points (or less in the case $j(E)=0,1728$) such that
    \begin{enumerate}
        \item For each  $\lambda' \in \Lambda_{\lambda(a,b)}$, there are exactly four points $P, P'$ that satisfy $\lambda(P) = \lambda(P') = \lambda'$.
        \item For every point $P = (a,b) \in R(P_0)$, the points $(a, -b), (-a, b),(-a, -b)$ and 
        
        \noindent $(b,a), (b,-a), (-b, a), (-b, -a)$ are also in $R(P_0)$. 
        \item $P_0$ is connected to every point in $R(P_0)$ over $A(\mathbb{F}_{q^4})$.
    \end{enumerate}
    
Note that by Lemma \ref{lem:rivalsconnected}, this set always exists. When $j(E)=0,1728$, some of these points may repeat since $|\Lambda(\lambda)|<6$, but we still get a set of connected vertices corresponding to all $\lambda'\in \Lambda(\lambda)$. 
\end{definition}

\begin{lemma}\label{lem:sqrt1minlambda}
    Let $q\equiv 1\mod 4$ and $(a,b)\in V(\FF_q)$, and let $(c,d)\in V(\FF_q)$ be such that $\lambda(c,d)=1-\lambda(a,b)$. Then $(a,b)$ has a grandparent if and only if $(c,d)$ has a child.
\end{lemma}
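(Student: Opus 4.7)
The plan is to translate both sides of the biconditional into the same explicit square-class condition on the entries $a,b$, using Corollary \ref{cor:structureofpreim} to describe parents and the hypothesis that $q\equiv 1\pmod 4$ to freely ignore signs of quadratic residues.

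First I would set up notation: the hypothesis that $(c,d)\in V(\FF_q)$ with $\lambda(c,d)=1-\lambda(a,b)=(a^2-b^2)/a^2$ forces $a^2-b^2$ to be a square in $\FF_q$, so I may write $s=\sqrt{a^2-b^2}\in\FF_q$. By Corollary \ref{cor:structureofpreim}, $(a,b)$ has two parents, namely $(a+s,a-s)$ and $(a-s,a+s)$.

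Next I would characterize ``$(a,b)$ has a grandparent.'' Applying Corollary \ref{cor:structureofpreim} once more, each parent has a parent precisely when the difference of squares of its coordinates is a square; these two differences are
\begin{equation*}
(a+s)^2-(a-s)^2=4as,\qquad (a-s)^2-(a+s)^2=-4as.
\end{equation*}
Because $q\equiv 1\pmod 4$, $-1\in\FF_q^{\times 2}$, so the two conditions coincide. Thus $(a,b)$ has a grandparent if and only if $as\in\FF_q^{\times 2}$.

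For the other side, any $(c,d)\in V(\FF_q)$ with $\lambda(c,d)=s^2/a^2$ satisfies $d/c=\pm s/a$, so $cd=\pm c^2 s/a$. By definition of $\AGM$, $(c,d)$ has a child exactly when $cd\in\FF_q^{\times 2}$; using $-1\in\FF_q^{\times 2}$ and canceling the square $c^2$, this is equivalent to $s/a\in\FF_q^{\times 2}$, and multiplying by $a^2$ to $as\in\FF_q^{\times 2}$. Since both ``$(a,b)$ has a grandparent'' and ``$(c,d)$ has a child'' reduce to the single criterion $as\in\FF_q^{\times 2}$, the equivalence follows. There is no real obstacle here: the argument is a direct computation, and the only subtlety is the systematic use of $-1$ being a square to collapse sign ambiguities coming from the two parents of $(a,b)$ and the two choices of $d$ compatible with $\lambda(c,d)$.
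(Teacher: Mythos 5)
Your proof is correct and follows essentially the same route as the paper: compute the parents of $(a,b)$ via Corollary \ref{cor:structureofpreim}, compute the difference of squares of the parent's coordinates, and match the resulting square-class condition to the condition for $(c,d)$ to have a child, using $-1\in\FF_q^{\times 2}$ to absorb signs. The only cosmetic difference is that you work with the intermediary $s=\sqrt{a^2-b^2}$ and show both conditions reduce to $as\in\FF_q^{\times 2}$, whereas the paper substitutes $s = a\,d/c$ from the outset to express the grandparent criterion directly as $cd\in\FF_q^{\times 2}$; these are the same computation.
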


\begin{proof}
    Note that $(a,b)$ has a parent by Lemma \ref{rivalsexistence}. Its parents is given by $(a_0,b_0)$, where
    \begin{equation*}
        a_0,b_0=a\pm\sqrt{a^2-b^2}=a\pm a\sqrt{1-b^2/a^2}=a\pm a\sqrt{1-\lambda(c,d)}=a\pm a\frac{d}{c}.
    \end{equation*}
    Then by Corollary \ref{cor:structureofpreim}, $(a_0,b_0)$ has a parent if and only if $a_0^2-b_0^2$ is a square. We compute
    \begin{equation*}
        a_0^2-b_0^2=-4a^2\frac{d}{c}\equiv cd\mod \FF_{q}^{\times2},
    \end{equation*}
    which is a square if and only if $(c,d)$ has a child. 
\end{proof}

\section{Taxonomy}\label{sec:taxonomy}
In this section, we develop some results on the structures of connected components in an aquarium. We have three main cases: $q = 3 \pmod 4$, $q = 5 \pmod 8$, and $q = 1 \pmod 8$. The $q \equiv 3 \pmod 4$ case was discussed in \cite{GOSTJellyfish}, but before we can reproduce their result we need to introduce some terminology.

\begin{figure}[h]
    \centering
    \includegraphics[width=0.8\linewidth]{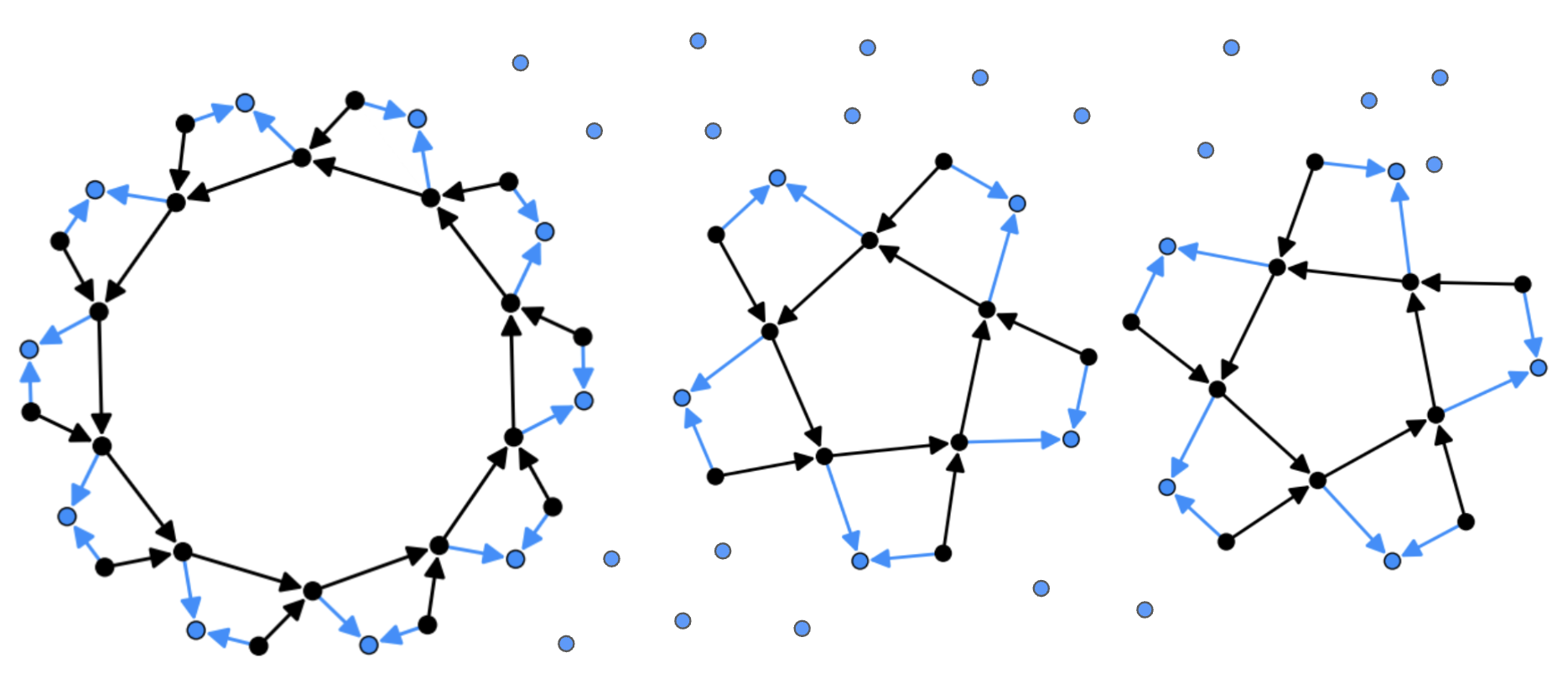}
    \caption{The aquarium $A(\mathbb{F}_{11})$. Here the lighter blue points and edges denote the ``dead end" branches of the square root, so that the black part of the graph is the jellyfish swarm  for $\mathbb{F}_{11}$.}
    \label{fig:enter-label}
\end{figure}

\begin{definition}
    An undirected graph $G$ is \textit{connected} if for every distinct pair of vertices $u, v \in G$, the graph contains a path from $u$ to $v$. A directed graph $G'$ is connected if the undirected graph formed by replacing every directed edge with an undirected edge is connected. 
\end{definition}
\begin{definition}
A \textit{connected component} in a directed graph is a connected subgraph that is not contained in any larger connected subgraph. 
    
\end{definition}

\begin{definition}
    A directed graph $G$ is \textit{strongly connected} if for every distinct pair of vertices $u, v \in G$, there is a (directed) path from $u$ to $v$ and vice versa. Strong connectedness is strictly stronger than connectedness. 
\end{definition}

\begin{definition}
    A \textit{strongly connected component} of a graph $G$ is a strongly connected subgraph that is not contained in any larger strongly connected subgraph. It is \textit{nontrivial} if it has more than one vertex. 
\end{definition}

\begin{definition}\label{def:fish}
    A \textit{fish} is a connected component of size 4. A fish will always contain a point $(a,b)$, another point $(b,a)$, and their shared children. Every edge in a fish is associated to the same isogeny.

    \[\begin{tikzcd}
	{(a,b)} & {(b,a)} \\
	{(\frac{a+b}{2}, \sqrt{ab})} & {(\frac{a+b}{2}, -\sqrt{ab})}
	\arrow[from=1-1, to=2-1]
	\arrow[from=1-1, to=2-2]
	\arrow[from=1-2, to=2-1]
	\arrow[from=1-2, to=2-2]
\end{tikzcd}\]
This structure is the basic building block of connected components in the aquarium, because if a component contains any edge in this structure it must contain the other three. 
\end{definition}

\begin{definition}
    A \textit{jellyfish} is a connected component in a graph $A(\mathbb{F}_q)$ that is not itself strongly connected but contains at least one nontrivial strongly connected component. 
\end{definition}

\begin{remark}\noindent
    \begin{enumerate}
        \item We will show in Proposition \ref{prop:cycles} that the strongly connected components in a jellyfish are always simple cycles, called jellyfish heads or bell heads. 
        \item In \cite{GOSTJellyfish}, the authors did not give a precise definition of what a jellyfish was, outside of referring to the connected components of $A(\FF_q)$, $q\equiv 3\mod 4$ as jellyfish. For our purposes, it is useful to have a precise graph-theoretic definition of jellyfish. 
    \end{enumerate}
\end{remark}

Now, we can state the $q\equiv 3\mod 4$ case of the classification of connected components given in \cite{GOSTJellyfish}.

\begin{proposition}[\cite{GOSTJellyfish}, Theorem 1(3)]
If $\mathbb{F}_q$ is a finite field with $q \equiv 3 \pmod 4$, then every nontrivial connected component in $A(\mathbb{F}_q)$ is a jellyfish, and every jellyfish has a bell head with length one tentacles pointing to each node.
\end{proposition}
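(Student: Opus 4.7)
The plan is to exploit that $-1$ is a non-square in $\FF_q$ when $q\equiv 3\pmod 4$, which forces a rigid local structure at every vertex. Call $(a,b)\in V(\FF_q)$ \emph{live} if $ab$ is a square (so $\AGM(a,b)$ has two elements) and \emph{dead} otherwise. For a live vertex the two children $(\tfrac{a+b}{2},\pm\sqrt{ab})$ have coordinate products $\pm\tfrac{(a+b)\sqrt{ab}}{2}$, differing by a factor of $-1$, so exactly one child is live and the other dead. This gives a well-defined live-child function $f\colon L_C\to L_C$ on the set $L_C$ of live vertices of any nontrivial component $C$; iterating $f$ from any live vertex yields a cycle by finiteness, which immediately proves every nontrivial component contains a nontrivial strongly connected subgraph and hence is a jellyfish.

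For the precise structural description, I would analyze the $f$-preimages of a vertex $(a',b')$ via Corollary \ref{cor:structureofpreim}: $(a',b')$ has parents iff $(a')^2-(b')^2$ is a square, in which case the two parents $(\alpha,\beta),(\beta,\alpha)$ satisfy $\alpha\beta=(b')^2$, automatically a square, so both are live and share $(a',b')$ as their live child. Thus the $f$-in-degree is $0$ or $2$. The hypothesis $q\equiv 3\pmod 4$ now forces that within any swap-pair $\{(\alpha,\beta),(\beta,\alpha)\}$ exactly one member admits parents, since the two conditions ``$\alpha^2-\beta^2$ is a square'' and ``$\beta^2-\alpha^2$ is a square'' differ by the non-square factor $-1$. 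Combined with the functional-graph structure of $f$ on a finite set, $L_C$ must be a single cycle $V_1\to\cdots\to V_n\to V_1$ together with, for each $i$, an off-cycle swap vertex $W_i$ with $f(W_i)=V_{i+1}$ and $f$-in-degree zero. Since swap-pairs $\{V_i,W_i\}$ share both children by Lemma \ref{lem:edgecriterion}, and dead vertices have no outgoing edges, the remaining vertices in $C$ are dead children $D_i$, one for each $i$, attached as length-one tentacles outgoing from the pair $(V_i,W_i)$.

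The main obstacle is the swap-pair dichotomy: proving that the swap of a cycle vertex never has parents, so that the pre-image structure collapses into single off-cycle leaves $W_i$ rather than longer chains. This is where the hypothesis $q\equiv 3\pmod 4$ is used most sharply, via the non-squareness of $-1$. After that step the entire classification reduces to a finite functional-graph argument combined with the swap-symmetry of the AGM, establishing the cycle bell head $V_1\to\cdots\to V_n\to V_1$ with length-one tentacles $W_i$ (incoming) and $D_i$ (outgoing) at every node.
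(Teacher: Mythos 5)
Your proof is correct and reconstructs the argument the paper delegates to \cite{GOSTJellyfish}: since $-1$ is a non-square, exactly one child of each live vertex is live, giving a live-child function $f$ whose functional graph, together with the swap-pair dichotomy for parents, collapses to a single cycle with depth-one incoming tails $W_i$ and shared dead children $D_i$. Two small points worth tightening: the non-strong-connectedness of the component (required by the definition of jellyfish) is established only later in your argument via the in-degree-zero vertices $W_i$, so it is not ``immediate'' after exhibiting the cycle; and the claim that $L_C$ forms a \emph{single} rho-shape implicitly uses connectedness of $C$, since $f$-preimages, swap partners, and the two parents of any dead vertex all lie in a common $f$-component, so distinct rho-shapes would yield distinct connected components.
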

While our statement of this theorem is different from \cite{GOSTJellyfish}, the proof still applies. However, because we are taking both branches of the square root, our ``jellyfish swarms" in the $q = 3 \pmod 4$ case look somewhat different.

\begin{figure}[h!]
    \centering
    \includegraphics[width=0.8\linewidth]{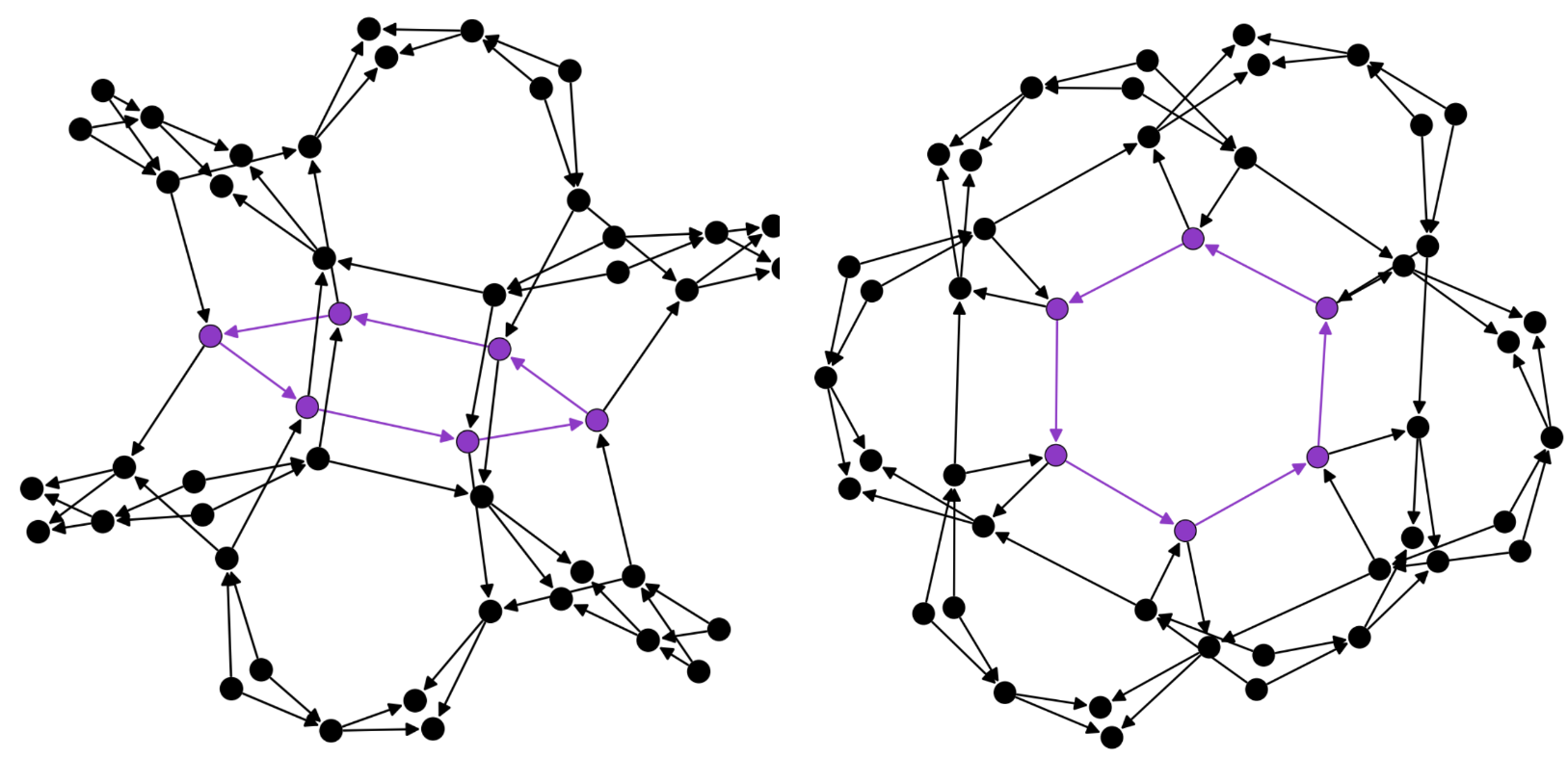}
    \caption{Two views of a jellyfish with 48 total points and a strongly connected component of size 6 in the center, highlighted in purple. This jellyfish arises in the graph $A(\mathbb{F}_{125})$. }
    \label{fig:48fish}
\end{figure}

We will also show that isolated points, fish, and jellyfish are the only connected components that can arise in the $q \equiv 5 \pmod 8$ case.
 
\begin{lemma}\label{lem:fish}
    Let $C$ be a connected component of $A(\FF_q)$, and suppose $C$ contains the edge $(a,b) \mapsto (\frac{a+b}{2}, \sqrt{ab})$. Then $C$ contains the points $(b,a)$ and $(\frac{a+b}{2}, -\sqrt{ab})$ also, in a ``fish" configuration.
\end{lemma}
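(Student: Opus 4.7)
The plan is to unpack Definition \ref{def:AGM} directly and exploit the $a\leftrightarrow b$ symmetry of the AGM map. The argument is essentially a definition chase; I flag at the end that there is really no hard obstacle here, the content being concentrated entirely in the fact that $\AGM(a,b)$, when nonempty, always contains \emph{both} branches of $\sqrt{ab}$ and is invariant under swapping $a$ and $b$.

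First, since the edge $(a,b)\mapsto\bigl(\tfrac{a+b}{2},\sqrt{ab}\bigr)$ lies in $A(\FF_q)$, the set $\AGM(a,b)$ is nonempty. Definition \ref{def:AGM} then forces
\[
\AGM(a,b)=\Bigl\{\bigl(\tfrac{a+b}{2},\sqrt{ab}\bigr),\bigl(\tfrac{a+b}{2},-\sqrt{ab}\bigr)\Bigr\},
\]
so the directed edge $(a,b)\to\bigl(\tfrac{a+b}{2},-\sqrt{ab}\bigr)$ is automatically also present in $A(\FF_q)$. Since one endpoint already lies in $C$, both this new edge and the vertex $\bigl(\tfrac{a+b}{2},-\sqrt{ab}\bigr)$ lie in $C$. (A quick sanity check: the conditions $\tfrac{a+b}{2}\neq 0$ and $\tfrac{a+b}{2}\neq\pm\sqrt{ab}$ needed for membership in $V(\FF_q)$ follow from $a\neq-b$ and $a\neq b$, respectively, since $(\tfrac{a+b}{2})^2=ab$ would force $(a-b)^2=0$.)

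Next, I would examine the swapped pair $(b,a)$. The defining conditions $a,b\neq 0$ and $a\neq\pm b$ of $V(\FF_q)$ are symmetric under $a\leftrightarrow b$, so $(b,a)\in V(\FF_q)$. Since $\tfrac{b+a}{2}=\tfrac{a+b}{2}$ and $ba=ab$, we have $\AGM(b,a)=\AGM(a,b)$ as sets, so both edges $(b,a)\to\bigl(\tfrac{a+b}{2},\pm\sqrt{ab}\bigr)$ exist in $A(\FF_q)$. Using the shared child $\bigl(\tfrac{a+b}{2},\sqrt{ab}\bigr)$, which by hypothesis is already in $C$, these edges force $(b,a)\in C$ as well.

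Combining these two steps, $C$ contains all four vertices
$(a,b),\ (b,a),\ \bigl(\tfrac{a+b}{2},\sqrt{ab}\bigr),\ \bigl(\tfrac{a+b}{2},-\sqrt{ab}\bigr)$
together with the four directed edges described in Definition \ref{def:fish}, which is exactly the fish configuration. The main obstacle, such as it is, is purely notational bookkeeping — verifying that the newly produced points really lie in $V(\FF_q)$ — but this is immediate from the symmetry of the conditions defining $V(\FF_q)$ and from $\AGM(a,b)$ being nonempty.
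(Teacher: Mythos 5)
Your proof is correct and takes essentially the same approach as the paper's one-line argument, namely that $\AGM(a,b)$ contains both square-root branches and equals $\AGM(b,a)$. The extra verification that the new vertices lie in $V(\FF_q)$ is careful bookkeeping the paper leaves implicit.
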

\begin{proof}
    Clearly $(\frac{a+b}{2}, -\sqrt{ab}) \in \AGM(a,b)$, and $\AGM(a,b) = \AGM(b,a)$. 
\end{proof}

\begin{lemma}\label{lem:5mod8}
    Let $q\equiv 5 \mod 8$. Let $(a_0,b_0)\in V(\FF_q)$ be such that there exists $(a_{-1},b_{-1})\mapsto (a_0,b_0)\mapsto (a_1,b_1)$.
    \begin{enumerate}
        \item There exists $(a_0',b_0')\in V(\FF_q)$ with $\AGM(a_0',b_0')=\AGM(a_0,b_0)$, and moreover $a_0'=b_0$ and $b_0'=a_0$.
        \item There exists $(a_{-1}',b_{-1}')\in V(\FF_q)$ such that $(a_{-1}',b_{-1}')\mapsto (a_0',b_0')$.
        \item Exactly one of the following is true
        \begin{enumerate}
            \item There exists $(a_{-2},b_{-2})\in V(\FF_q)$ such that $(a_{-2},b_{-2})\mapsto (a_{-1},b_{-1})$.
            \item There exists $(a_{-2}',b_{-2}')\in V(\FF_q)$ such that $(a_{-2}',b_{-2}')\mapsto (a_{-1}',b_{-1}')$.
        \end{enumerate}
    \end{enumerate}
    \begin{tikzcd}
    	{\text{Assuming}} && {3(a)} &&& {3(b)} \\
    	&& {(a_{-2},b_{-2})} &&&& {(a_{-2}',b_{-2}')} \\
    	{(a_{-1},b_{-1})} && {(a_{-1},b_{-1})} & {(a_{-1}',b_{-1}')} && {(a_{-1},b_{-1})} & {(a_{-1}',b_{-1}')} \\
    	{(a_0,b_0)} & \Rightarrow & {(a_0,b_0)} & {(a_0',b_0')} & {\text{or}} & {(a_0,b_0)} & {(a_0',b_0')} \\
    	{(a_1,b_1)} && {(a_1,b_1)} &&& {(a_1,b_1)}
    	\arrow[maps to, from=2-3, to=3-3]
    	\arrow[maps to, from=2-7, to=3-7]
    	\arrow[maps to, from=3-1, to=4-1]
    	\arrow[maps to, from=3-3, to=4-3]
    	\arrow[maps to, from=3-4, to=4-4]
    	\arrow[maps to, from=3-6, to=4-6]
    	\arrow[maps to, from=3-7, to=4-7]
    	\arrow[from=4-1, to=5-1]
    	\arrow[maps to, from=4-3, to=5-3]
    	\arrow[maps to, from=4-4, to=5-3]
    	\arrow[maps to, from=4-6, to=5-6]
    	\arrow[maps to, from=4-7, to=5-6]
    \end{tikzcd}
\end{lemma}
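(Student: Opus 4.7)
Parts (1) and (2) reduce to a quick application of Corollary \ref{cor:structureofpreim} together with the symmetry $\AGM(a,b) = \AGM(b,a)$ built into Definition \ref{def:AGM}. For (1), simply set $(a_0', b_0') = (b_0, a_0)$; this lies in $V(\FF_q)$ because $(a_0, b_0)$ does. For (2), by Corollary \ref{cor:structureofpreim} the pair $(a_0', b_0')$ has a parent iff $b_0^2 - a_0^2 \in \FF_q^{\times 2}$, and this follows from the existence of $(a_{-1}, b_{-1})$ (which gives $a_0^2 - b_0^2 \in \FF_q^{\times 2}$ by the same corollary) combined with the fact that $-1 \in \FF_q^{\times 2}$ when $q \equiv 1 \pmod 4$.

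The substance of the lemma is part (3). My plan is to apply Corollary \ref{cor:structureofpreim} a second time on each side. Fix $s \in \FF_q^\times$ with $s^2 = a_0^2 - b_0^2$; then $\{a_{-1}, b_{-1}\} = \{a_0 + s,\ a_0 - s\}$, and expanding $(a_{-1}+b_{-1})(a_{-1}-b_{-1})$ gives $a_{-1}^2 - b_{-1}^2 = \pm 4 a_0 s$. Using $-1 \in \FF_q^{\times 2}$, this shows $(a_{-1}, b_{-1})$ has a parent iff $a_0 s \in \FF_q^{\times 2}$. An entirely parallel computation with $t = is$ (a square root of $b_0^2 - a_0^2$, which lies in $\FF_q$ since $i \in \FF_q$) shows that $(a_{-1}', b_{-1}')$ has a parent iff $i\, b_0 s \in \FF_q^{\times 2}$.

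To conclude that exactly one of these conditions holds, it suffices to show that the product of Legendre symbols $\left(\frac{a_0 s}{q}\right)\left(\frac{i b_0 s}{q}\right) = \left(\frac{i a_0 b_0}{q}\right)$ equals $-1$. This is where the hypothesis $q \equiv 5 \pmod 8$ enters decisively: since $(q-1)/2 \equiv 2 \pmod 4$, we have $i^{(q-1)/2} = i^2 = -1$, so $i$ is a non-square in $\FF_q$. On the other hand, the assumption that $(a_0, b_0)$ has a child forces $a_0 b_0 \in \FF_q^{\times 2}$ by Definition \ref{def:AGM}. Combining these gives $\left(\frac{i}{q}\right)\left(\frac{a_0 b_0}{q}\right) = (-1)(+1) = -1$, as required.

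The main obstacle is conceptual rather than computational: one must recognize that the various sign ambiguities in the square roots $s$, $t$, and in the $\pm$ of $a_{-1}^2 - b_{-1}^2$, are all absorbed by $-1$ being a square. What remains is the single arithmetic invariant $\left(\frac{i}{q}\right)$, which is precisely the quantity distinguishing $q \equiv 5 \pmod 8$ from $q \equiv 1 \pmod 8$; this explains why the trichotomy of cases for $q \pmod 8$ is inevitable in the aquarium classification, and why the analogous statement fails (in general) when $q \equiv 1 \pmod 8$, where $i$ becomes a square and the two parent-existence conditions decouple.
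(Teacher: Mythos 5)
Your proof is correct and follows essentially the same route as the paper: parts (1) and (2) are handled identically, and for (3) the paper computes $\Delta = a_{-1}^2 - b_{-1}^2 = 4a_0\sqrt{a_0^2-b_0^2}$ and $\Delta' = (a_{-1}')^2 - (b_{-1}')^2 = 4b_0\sqrt{b_0^2-a_0^2}$, then shows $\Delta\Delta' = i\cdot(\text{square})$, which is exactly the Legendre-symbol product you compute (your $a_0 s$ and $i b_0 s$ are $\Delta/4$ and $\Delta'/4$). The only difference is presentational — you phrase the conclusion in terms of Legendre symbols rather than writing out $\Delta\Delta'$ — and your closing remark about why the argument breaks at $q\equiv 1\pmod 8$ is a nice (correct) observation, though not needed for the lemma itself.
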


\begin{proof}
    (1) follows immediately from Lemma \ref{lem:fish}. For (2), by Corollary \ref{cor:structureofpreim} we know that $(a_{-1}',b_{-1}')$ exists if and only if $\sqrt{(a_0')^2-(b_0')^2}$ exists. But since $a_0'=b_0$ and $b_0'=a_0$, we have
    \begin{equation*}
        \sqrt{(a_0')^2-(b_0')^2}=\sqrt{b_0^2-a_0^2}=i\sqrt{a_0^2-b_0^2}.
    \end{equation*}
    and $\sqrt{a_0^2-b_0^2}$ exists because $(a_{-1},b_{-1})$ exists. 

    Now for (3). By Corollary \ref{cor:structureofpreim} and parts (1) and (2), we have
    \begin{align*}
        a_{-1}=a_0+\sqrt{a_0^2-b_0^2}, &\qquad b_{-1}=a_0-\sqrt{a_0^2-b_0^2}, \\
        a_{-1}'=b_0+\sqrt{b_0^2-a_0^2}, &\qquad b_{-1}'=b_0-\sqrt{b_0^2-a_0^2}.
    \end{align*}
    By Corollary \ref{cor:structureofpreim}, we have that $(a_{-2},b_{-2})$ (resp $(a_{-2}',b_{-2}')$) exist if and only if $\Delta=a_{-1}^2-b_{-1}^2$ (resp. $\Delta'=(a_{-1}')^2-(b_{-1}')^2$) is a square. We compute
    \begin{align*}
        \Delta&=a_{-1}^2-b_{-1}^2=(a_0+\sqrt{a_0^2-b_0^2})^2-(a_0-\sqrt{a_0^2-b_0^2})^2=4a_0\sqrt{a_0^2-b_0^2}, \\
        \Delta'&=(a_{-1}')^2-(b_{-1}')^2=(b_0+\sqrt{b_0^2-a_0^2})^2-(b_0-\sqrt{b_0^2-a_0^2})^2=4b_0\sqrt{b_0^2-a_0^2}.
    \end{align*}
    If we compute $\Delta\Delta'$, we get
    \begin{equation*}
        \Delta\Delta'=\left(4a_0\sqrt{a_0^2-b_0^2}\right)\left(4b_0\sqrt{b_0^2-a_0^2}\right)=16a_0b_0i\sqrt{a_0^2-b_0^2}^2=i(\text{square}),
    \end{equation*}
    where $16a_0b_0\sqrt{a_0^2-b_0^2}^2$ is a square because $a_0b_0$ is a square. Since $q\equiv 5\mod 8$, $i\not \in \FF_q^{\times 2}$, $\Delta\Delta'$ is not a square. But this implies that exactly one of $\Delta,\Delta'$ is a square, which yields (3) by Corollary \ref{cor:structureofpreim}.
\end{proof}

\begin{corollary}\label{cor:mustfish}
    Let $q\equiv 5\mod 8$, and let $(a_{-1},b_{-1})\mapsto(a_{0},b_{0})\mapsto(a_1,b_1)$ be a chain of length two. Then the connected component containing this chain contains a nontrivial strongly connected component. Further, this component is a jellyfish (i.e. it is not strongly connected.)
\end{corollary}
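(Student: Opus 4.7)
The plan is to extract both assertions from an iterated application of Lemma \ref{lem:5mod8}(3), combined with the fact that the aquarium $A(\FF_q)$ is finite. Call the connected component in question $C$.

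The key observation is that Lemma \ref{lem:5mod8}(3) allows us to walk backward along any chain of length $2$ in $V(\FF_q)$: given such a chain, exactly one of the two candidate predecessors (for its earliest vertex, or for its twin) exists. Iterating this produces a sequence of chains
\[
w_n \to w_{n-1} \to \cdots \to w_0 = (a_1,b_1)
\]
whose length grows without bound. At each step we either append a new predecessor of the current earliest vertex $w_n$, or we first flip $w_n$ to its twin $w_n'$ and then append a predecessor of $w_n'$. The flip preserves validity of the chain because twins share all their children (since $\AGM(a,b)=\AGM(b,a)$), so the edge $w_n' \to w_{n-1}$ still holds, and the three earliest vertices of the updated chain form a fresh chain of length $2$ to which Lemma \ref{lem:5mod8}(3) applies again. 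Every newly appended vertex is adjacent via a directed edge to a vertex already in $C$, so the entire backward walk stays inside $C$.

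For the first assertion, we iterate past $|V(\FF_q)|$ steps. By pigeonhole some vertex $v$ appears at two positions $i<j$ of the chain, yielding a directed path $v \to \cdots \to v$ of length $j-i$ contained in $C$. The AGM graph has no self-loops, since $(a,b)\in V(\FF_q)$ forces $a\neq b$, so this cycle has length at least $2$, producing a nontrivial strongly connected component inside $C$.

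For the second assertion, we revisit the very first application of Lemma \ref{lem:5mod8}(3) to the given chain. In case (a), the twin $(b_{-1},a_{-1})$ of $(a_{-1},b_{-1})$ shares all of its children with $(a_{-1},b_{-1})$, so by Lemma \ref{lem:fish} it lies in $C$; yet Lemma \ref{lem:5mod8}(3) explicitly says it has no parent. In case (b), it is $(a_{-1},b_{-1})$ itself that has no parent, and it already lies in $C$. Either way, $C$ contains a vertex with no incoming edges, which rules out strong connectedness. The only point in the argument requiring care is verifying that the chain remains valid after each twin-flip in the iteration; once that is in hand (from $\AGM(a,b)=\AGM(b,a)$), both halves of the corollary are clean consequences of the finiteness of $A(\FF_q)$.
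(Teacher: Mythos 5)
Your strategy---iterate Lemma \ref{lem:5mod8}(3) to produce an arbitrarily long backward chain, pigeonhole to a cycle, and locate an in-degree-$0$ vertex to rule out strong connectedness---matches the paper's, but your reading of Lemma \ref{lem:5mod8}(3) is incorrect, and the error propagates to both halves of the argument.

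The disjunction in Lemma \ref{lem:5mod8}(3) is \emph{not} ``either the earliest vertex $w_n=(a_{-1},b_{-1})$ has a predecessor or its twin $(b_{-1},a_{-1})$ does.'' Since $q\equiv 5\pmod 8$ forces $-1\in\FF_q^{\times 2}$, Corollary \ref{cor:structureofpreim} shows that $(a_{-1},b_{-1})$ has a parent if and only if $(b_{-1},a_{-1})$ does: one requires $a_{-1}^2-b_{-1}^2\in\FF_q^{\times 2}$ and the other $-(a_{-1}^2-b_{-1}^2)\in\FF_q^{\times 2}$, which are equivalent. So the disjunction as you state it is never exclusive---both options hold or neither does---and your backward walk can stall. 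The lemma's actual disjunction is between a parent of $(a_{-1},b_{-1})$ and a parent of $(a_{-1}',b_{-1}')=(b_0+\sqrt{b_0^2-a_0^2},\,b_0-\sqrt{b_0^2-a_0^2})$, which is a parent of the twin $(b_0,a_0)$ of the \emph{middle} vertex $(a_0,b_0)$, not the twin of $(a_{-1},b_{-1})$. The chain extension in case (b) therefore replaces $(a_0,b_0)$ by its twin and $(a_{-1},b_{-1})$ by $(a_{-1}',b_{-1}')$ before prepending the new predecessor; your description of ``flip $w_n$ then prepend'' is the wrong operation. For the second half, the same confusion surfaces: in case (a) it is $(a_{-1}',b_{-1}')$, not $(b_{-1},a_{-1})$, that has in-degree $0$---indeed $(b_{-1},a_{-1})$ \emph{does} have a parent in case (a), by the equivalence above. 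The correct vertex is still in $C$ (since $(a_{-1}',b_{-1}')\mapsto(b_0,a_0)$, which shares children with $(a_0,b_0)$), so the conclusion of the corollary survives, but the step as you wrote it is false.
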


\begin{proof}
    By Lemma \ref{lem:5mod8}, we obtain a chain $(a_{-2}',b_{-2}')\mapsto (a_{-1}',b_{-1}')\mapsto (a_{0}',b_{0}')\mapsto (a_1,b_1)$, where $(a_i',b_i')$ may or may not equal $(a_i,b_i)$. Repeating this inductively on the chain $(a_{-2}',b_{-2}')\mapsto (a_{-1}',b_{-1}')\mapsto (a_{-0}',b_{-0}')$, we get an arbitrarily long chain. Since there are only finitely many points in $V(\mathbb{F}_q)$, this chain must repeat at some point, and therefore contains a cycle, which is strongly connected. 

    The get that this component is a jellyfish, note that by the ``exactly one" part of the statement of \ref{lem:5mod8}(3), there must be a node $(a_{-2}'',b_{-2}'')\in V(\FF_q)$, equal to either $(a_{-2},b_{-2})$ or $(a_{-2}',b_{-2}')$, that has in degree $0$. Since all vertices of a strongly connected graph have in degree at least $1$, this proves that the connected component is not strongly connected.
\end{proof}

\begin{proposition}\label{thm:q5mod8}
    If $q \equiv 5 \pmod 8$, the connected components in the AGM graph $A(\mathbb{F}_q)$ can be classified into three types: isolated points, fish, and jellyfish.
\end{proposition}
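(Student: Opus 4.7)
The plan is to classify a connected component $C$ of $A(\FF_q)$ by dichotomizing on whether $C$ contains a chain of length two. If $C$ contains no edges at all, then $C$ is a single isolated vertex. Otherwise $C$ contains an edge $(a,b)\mapsto(a',b')$, and Lemma \ref{lem:fish} immediately supplies the full fish
\[
F=\left\{(a,b),\,(b,a),\,\left(\tfrac{a+b}{2},\sqrt{ab}\right),\,\left(\tfrac{a+b}{2},-\sqrt{ab}\right)\right\}
\]
as a subgraph of $C$. Now split on whether $C$ contains a chain $P_{-1}\mapsto P_0\mapsto P_1$. If it does, Corollary \ref{cor:mustfish} already declares $C$ to be a jellyfish, so nothing further is needed in this case.

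The substantive remaining case is that $C$ contains $F$ but no chain of length two, and I need to show $C=F$. Suppose for contradiction that there is some vertex $v\in C\setminus F$; by connectedness there is an edge joining $F$ to $C\setminus F$, and it falls into one of four types according to whether it is incoming or outgoing at a top or bottom vertex of $F$. Each is ruled out individually. An out-edge from $(a,b)$ or $(b,a)$ would contradict the fact that $\AGM(a,b)=\AGM(b,a)$ consists precisely of the two bottom fish vertices. An in-edge to $\left(\tfrac{a+b}{2},\pm\sqrt{ab}\right)$ would contradict Corollary \ref{cor:structureofpreim}, which explicitly identifies the only possible parents in $V(\FF_q)$ as $(a,b)$ and $(b,a)$. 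The remaining two cases — an in-edge to a top vertex, or an out-edge from a bottom vertex — would each compose with an already-present fish edge to produce a chain of length two, violating the case hypothesis.

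The main obstacle is really encapsulated in Corollary \ref{cor:mustfish}, which is where the arithmetic hypothesis $q\equiv 5\pmod 8$ enters through the non-squareness of $i$; that is what forces the ``exactly one'' branching in Lemma \ref{lem:5mod8}(3) and propagates any length-two chain into a cycle. Once that result is in hand, the residual taxonomy is essentially routine combinatorial bookkeeping on the boundary of $F$, and requires no further number-theoretic input beyond Corollary \ref{cor:structureofpreim} to close off the fish when no length-two chain is present.
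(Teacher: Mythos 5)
Your proposal is correct and follows the same trichotomy as the paper (no edges, edges but no length-two chain, length-two chain exists), invoking Lemma \ref{lem:fish} and Corollary \ref{cor:mustfish} in the same roles. The only difference is that you spell out the four boundary cases ruling out extra edges on the fish, where the paper simply notes that any further connection would yield a length-two chain — your explicit accounting (out-degree saturation at the top, Corollary \ref{cor:structureofpreim} at the bottom, and the two genuine length-two cases) is a welcome refinement but not a different argument.
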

\begin{proof}
    We do casework. 
    \begin{enumerate}
    \item If a connected component has no edges, it is an isolated point. 
    \item If the maximum length of any chain in a connected component is exactly 1, then it must contain at least one edge, and so by Lemma \ref{lem:fish} it contains four points in a fish configuration. None of these four points can be connected to any other points, or else there would be a chain of length 2. So the component is of size four and is therefore a fish. 
    
    \item If there is a chain of length 2 in a connected component, it must be a jellyfish by by Corollary \ref{cor:mustfish}.
    \end{enumerate}
    All these cases together complete the proof.
\end{proof}
The $q \equiv 1 \pmod 8$ case is more interesting. We no longer have a result analogous to Lemma \ref{lem:5mod8}, so we can have acyclic components of arbitrary size. 

\begin{definition}
    A \textit{turtle} in $A(\FF_q)$ is a nontrivial connected component that is strongly connected. 
\end{definition}

\begin{figure}[h]
    \centering
    \scalebox{0.8}{
        \begin{tikzcd}[ampersand replacement=\&]
        	\& {(2+i,1+i)} \& {(2+i,2+2i)} \& {(2+2i,2+i)} \& {(2+2i,1+2i)} \\
        	{(i,1)} \& {(i,2)} \& {(2,i)} \& {(2,2i)} \& {(2i,2)} \& {(2i,1)} \\
        	\& {(1+2i,2+2i)} \& {(1+2i,1+i)} \& {(1+i,1+2i)} \& {(1+i,2+i)} \\
        	{(2i,2)} \& {(2i,1)} \& {(1,2i)} \& {(1,i)} \& {(i,1)} \& {(i,2)} \\
        	\& {(2+i,1+i)} \& {(2+i,2+2i)} \& {(2+2i,2+i)} \& {(2+2i,1+2i)}
        	\arrow[from=1-2, to=2-1]
        	\arrow[from=1-2, to=2-2]
        	\arrow[from=1-3, to=2-3]
        	\arrow[from=1-3, to=2-4]
        	\arrow[from=1-4, to=2-3]
        	\arrow[from=1-4, to=2-4]
        	\arrow[from=1-5, to=2-5]
        	\arrow[from=1-5, to=2-6]
        	\arrow[from=2-2, to=3-2]
        	\arrow[from=2-2, to=3-3]
        	\arrow[from=2-3, to=3-2]
        	\arrow[from=2-3, to=3-3]
        	\arrow[from=2-4, to=3-4]
        	\arrow[from=2-4, to=3-5]
        	\arrow[from=2-5, to=3-4]
        	\arrow[from=2-5, to=3-5]
        	\arrow[from=3-2, to=4-1]
        	\arrow[from=3-2, to=4-2]
        	\arrow[from=3-3, to=4-3]
        	\arrow[from=3-3, to=4-4]
        	\arrow[from=3-4, to=4-3]
        	\arrow[from=3-4, to=4-4]
        	\arrow[from=3-5, to=4-5]
        	\arrow[from=3-5, to=4-6]
        	\arrow[from=4-2, to=5-2]
        	\arrow[from=4-2, to=5-3]
        	\arrow[from=4-3, to=5-2]
        	\arrow[from=4-3, to=5-3]
        	\arrow[from=4-4, to=5-4]
        	\arrow[from=4-4, to=5-5]
        	\arrow[from=4-5, to=5-4]
        	\arrow[from=4-5, to=5-5]
        \end{tikzcd}
    }
    \caption{A turtle in $A(\mathbb{F}_{9})$ of size $16$. Here the duplicate points are identified, and we note that the component can be naturally drawn on a torus. }
\end{figure}
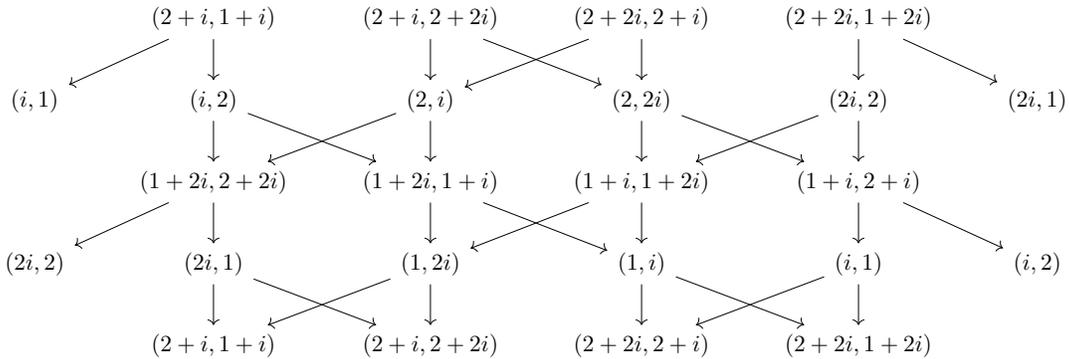
\noindent We will show that the turtles are associated to supersingular curves. 
\begin{definition}
    A \textit{large acyclic component} is an acyclic connected component larger than size 4.
\end{definition}
Figure \ref{fig:504} shows a large acyclic component of size 504, drawn as an acyclic graph sorted into levels. The half-size intermediate level is typical; see Section \ref{sec:conclusion} for more discussion. 
\begin{figure}[ht]
    \centering
    \includegraphics[width=0.9\linewidth]{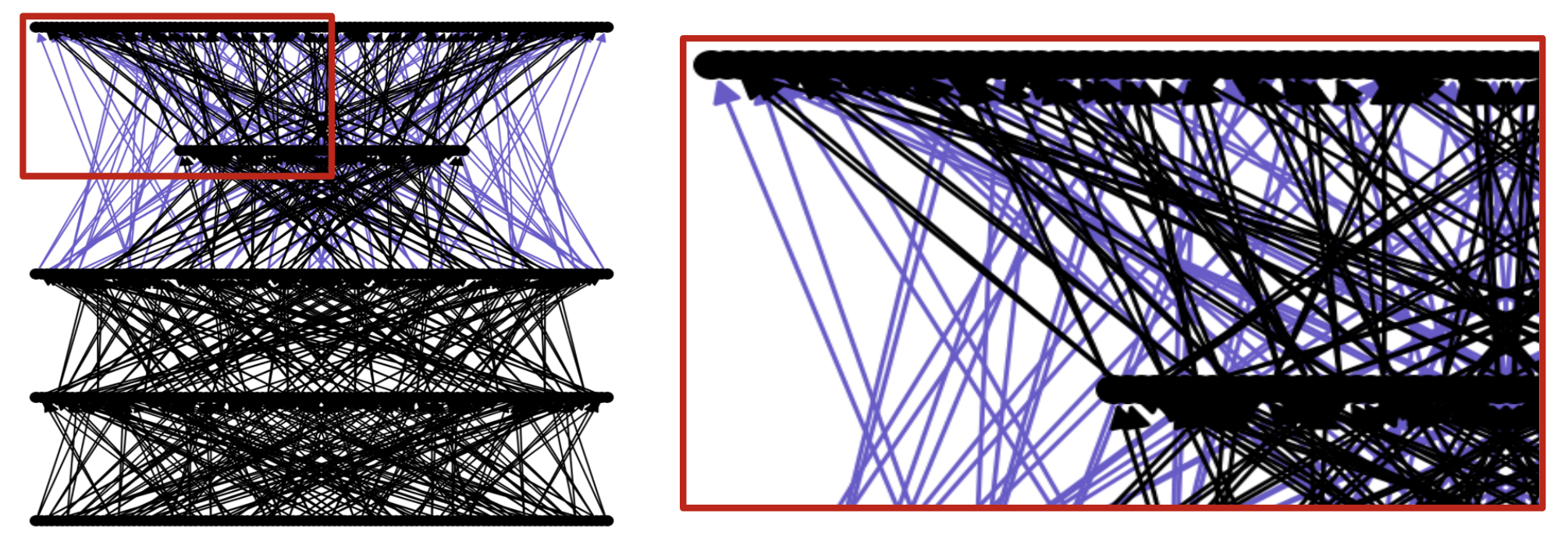}
    \caption{A large acyclic component of size 504, arising in $A(\mathbb{F}_{113})$, and a closeup of the same component. The edges are directed upwards; edges highlighted in blue are going directly from the middle level to the top level. }
    \label{fig:504}
\end{figure}

The jellyfish in the case $q \equiv 1 \pmod 8$ are larger and more complicated than the jellyfish in the other cases, but they have the same essential structure. We will show that the strongly connected components in the jellyfish are simple cycles, for all aquarium graphs $A(\mathbb{F}_q)$. 

\begin{proposition}\label{prop:cycles}
Let $H$ be a strongly connected component in an AGM aquarium $A(\mathbb{F}_q)$ with $2 \nmid q$, such that none of the points correspond to supersingular elliptic curves. Then:
\begin{enumerate} 
\item The elliptic curves corresponding to vertices in $H$ all have complex multiplication by the same order $\mathcal{O}$. 
\item There is a unique ideal $\mathfrak{p}_2\subset \mathcal{O}$ whose action on elliptic curves corresponding to points in the cycle gives the isogeny associated to the edges out of that point. In addition, $\left(\frac{{\rm disc}(K)}{2} \right) = 1$, where $K$ is the imaginary quadratic field containing $\mathcal{O}$.
\item $H$ is a simple cycle, and the action of $\mathfrak{p}_2$ sends elliptic curves associated to points in $H$ to the elliptic curves associated to their successors in $H$ (rotating the head forward by one edge). 

\end{enumerate}
\end{proposition}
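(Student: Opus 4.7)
The plan is to combine the complex-multiplication classification of $2$-isogenies (Lemma~\ref{lem:cmtheory}), the uniqueness of ascending $2$-isogenies (Lemma~\ref{lem:ascending}), and two rigidity facts about AGM isogenies: every $\varphi_{a,b}$ has kernel $\langle(0,0)\rangle$ while $\hat\varphi_{a,b}$ has kernel $\langle(1,0)\rangle$ (Lemma~\ref{lem:theisog}), and two consecutive AGM edges never compose to $[2]$ (Corollary~\ref{cor:nomultby2}). These ingredients are combined in three successive steps corresponding to the three conclusions of the proposition.

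\textbf{Step 1 (all edges horizontal).} I would first show every edge of $H$ is horizontal, which yields (1). Fix a cycle $v_0\to v_1\to\cdots\to v_n=v_0$ in $H$, let $E_i=E_{\lambda(v_i)}$, and denote the AGM isogenies by $\varphi_i:E_i\to E_{i+1}$. Assume for contradiction some $\varphi_i$ is descending; then the $2$-adic valuation of $[\mathcal{O}_K:\mathcal{O}(E_{i+1})]$ is at least $1$ and $\mathcal{O}(E_{i+1})$ is not maximal at $2$. By Lemma~\ref{lem:ascending} the unique ascending $2$-isogeny from $E_{i+1}$ is $\hat\varphi_i$, which has kernel $\langle(1,0)\rangle_{E_{i+1}}$ by Lemma~\ref{lem:theisog}(3). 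Since $\ker\varphi_{i+1}=\langle(0,0)\rangle_{E_{i+1}}\neq\langle(1,0)\rangle$, the next AGM edge $\varphi_{i+1}$ cannot be ascending; and Lemma~\ref{lem:cmtheory} precludes horizontal $2$-isogenies when the endomorphism ring is not maximal at $2$, so $\varphi_{i+1}$ must itself be descending. Iterating, every subsequent edge is descending and the depth strictly increases, contradicting cycle closure. Hence no edge of $H$ is descending, and the vanishing net depth change around any cycle then rules out ascending edges as well, so all edges are horizontal. Since horizontal $2$-isogenies preserve the endomorphism ring, every curve in $H$ shares a common order $\mathcal{O}$.

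\textbf{Step 2 ($2$ splits).} Horizontal $2$-isogenies in $H$ force $\mathcal{O}$ to be maximal at $2$ (Lemma~\ref{lem:cmtheory}), so $2$ is either split or ramified in $\mathcal{O}$. If $2$ were ramified, then $(2)=\mathfrak{p}_2^2$ for a unique prime of norm $2$, and every AGM edge in $H$ would correspond to $\mathfrak{p}_2$. Two consecutive edges would then compose to the action of $\mathfrak{p}_2^2=(2)$, i.e., to $[2]$ up to an automorphism, whose kernel is $E[2]$, contradicting Corollary~\ref{cor:nomultby2}. Therefore $2$ splits in $\mathcal{O}$ and hence in $K$, giving $\left(\frac{\disc(K)}{2}\right)=1$.

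\textbf{Step 3 (unique prime and simple cycle).} Writing $(2)=\mathfrak{p}_2\bar{\mathfrak{p}}_2$, at each vertex of $H$ the AGM edge corresponds to one of the two primes above $2$. If two consecutive edges came from opposite primes their composition would again be $\mathfrak{p}_2\bar{\mathfrak{p}}_2=(2)=[2]$, contradicting Corollary~\ref{cor:nomultby2}; hence a single prime $\mathfrak{p}_2$ governs every edge of $H$, establishing~(2). For~(3), the map $v\mapsto E_{\lambda(v)}$ then sends $H$ onto an $\langle[\mathfrak{p}_2]\rangle$-orbit in $\cl(\mathcal{O})$, traversed cyclically with each edge advancing by one step. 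Together with strong connectedness and the fact that at each vertex only one of the two AGM branches (differing by a sign of the square root) can remain inside $H$ while the other leaves into an acyclic tentacle, this forces $H$ to be a single simple directed cycle whose one-step shift realises the $\mathfrak{p}_2$-action.

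\textbf{Main obstacle.} The crux is the ``descending-begets-descending'' induction in Step~1: combining the kernel asymmetry $\langle(0,0)\rangle\neq\langle(1,0)\rangle$ from Lemma~\ref{lem:theisog} with the uniqueness of the ascending $2$-isogeny from Lemma~\ref{lem:ascending} to ensure that once an AGM cycle starts descending it can never recover. Once that is in hand, Steps~2 and~3 reduce to two clean applications of Corollary~\ref{cor:nomultby2}, treating the ramified case and the mixed-prime case respectively.
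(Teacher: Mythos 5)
Your Steps 1 and 2, which establish parts (1) and (2), are correct and in the same spirit as the paper's proof. The organization of Step 1 differs somewhat: the paper argues that a non-horizontal cycle must contain an adjacent descending--ascending pair $\varphi,\psi$, identifies $\psi=\hat\varphi$ by uniqueness of the ascending $2$-isogeny, and then invokes Corollary~\ref{cor:nomultby2}; you instead run a ``descending-begets-descending'' induction from the kernel mismatch $\langle(0,0)\rangle\neq\langle(1,0)\rangle$ and the non-maximality at $2$ forcing the next edge to be descending. Both routes are valid and both ultimately rest on Lemma~\ref{lem:ascending} and the kernel dichotomy of Lemma~\ref{lem:theisog}. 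Your treatment of (2), splitting into the ramified and mixed-prime cases and ruling each out via Corollary~\ref{cor:nomultby2}, is also correct and close to the paper's.

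However, your proof of part (3) has a genuine gap. The sentence ``the fact that at each vertex only one of the two AGM branches (differing by a sign of the square root) can remain inside $H$ while the other leaves into an acyclic tentacle'' is presented as an ingredient, but it is precisely the nontrivial claim that needs to be proved. Note in particular that the two out-edges from a vertex $(a_0,b_0)$ go to $(a_1,b_1)$ and $(a_1,-b_1)$, which have the \emph{same} $\lambda$-value and hence correspond to the \emph{same} isomorphism class and the \emph{same} AGM isogeny; knowing every edge in $H$ acts by $\mathfrak{p}_2$ is therefore not by itself enough to exclude both children from $H$. The paper's argument goes one step further down: it uses Lemma~\ref{lem:cousins} to show the grandchildren of $(a_0,b_0)$ through the two children have $\lambda$-values $\lambda_2$ and $\frac{\lambda_2}{\lambda_2-1}$, and then Lemma~\ref{lem:rivalscorresponding2tors} to show that the isomorphism between $E_{\lambda_2}$ and $E_{\lambda_2/(\lambda_2-1)}$ carries $(0,0)$ to $(\lambda_2,0)$; consequently the two grandchildren's outgoing AGM isogenies have \emph{different} kernels on the common isomorphism class and so correspond to the two different primes above $2$, one of which cannot be $\mathfrak{p}_2$. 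Your proposal does not invoke either of these lemmas or supply an alternative argument, so (3) is not established.
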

\begin{proof}
In the following, we make heavy use of Lemma \ref{lem:cmtheory}.
\begin{enumerate}
\item Assume for the sake of contradiction that there exist points $P = (a, b)$ and $P' = (a', b')$ in $H$ with corresponding elliptic curves $E_\lambda$ and  $E_{\lambda'}$ such that $E_\lambda$ has complex multiplication by $\mathcal{O}$ and $E_{\lambda'}$ has complex multiplication by $\mathcal{O}'$, with $\mathcal{O} \neq \mathcal{O}'$.

Since $P$ and $P'$ are in the same strongly connected component, there must be some simple cycle containing them both, which we will call $C$. Assume without loss of generality that $\mathcal{O}' \subsetneq \mathcal{O}$ and $\mathcal{O}$ is the maximal order among orders in the simple cycle. Then there must be at least one descending isogeny (see Lemma \ref{lem:cmtheory}) in the sequence of isogenies that takes $E_\lambda$ to $E_\lambda'$, and at least one ascending isogeny in the sequence of isogenies that takes $E_\lambda'$ to $E_\lambda$. 

For any order $\mathcal{O}''$ of an elliptic curve in the simple cycle, we know $[\mathcal{O}: \mathcal{O''}] = 2^r$ for some $r>0$, since they are connected by a sequence of 2-isogenies and $\mathcal{O}$ is maximal among orders in the cycle. So we have that $2|[\mathcal{O}_K : \mathcal{O''}] = [\mathcal{O}_K : \mathcal{O}][\mathcal{O} : \mathcal{O}'']$, and so by Lemma \ref{lem:cmtheory}, there cannot be any horizontal 2-isogenies between elliptic curves with CM by the same non $2$-maximal order $\mathcal{O}'$.

\begin{figure}[h!]
\[\begin{tikzcd}
	\bullet & \bullet && \bullet & \bullet & P \\
	&& {P''}
	\arrow[from=1-1, to=1-2]
	\arrow["\varphi", from=1-2, to=2-3]
	\arrow[from=1-4, to=1-5]
	\arrow[dotted, no head, from=1-5, to=1-6]
	\arrow["\psi", from=2-3, to=1-4]
\end{tikzcd}\]
\end{figure}
\vspace{-20pt}
As a result, there must be some pair of ascending and descending isogenies directly adjacent to each other, since there cannot be any horizontal isogenies in between. Let the descending isogeny be $\varphi$ and the ascending isogeny be $\psi$.

Consider the dual of the descending isogeny $\varphi$. It must be an ascending 2-isogeny, but we know that there is a unique ascending isogeny from $P''$ by Lemma \ref{lem:ascending}, which must be $\psi$. So $\psi$ and $\varphi$ are dual; but this is a contradiction, since by Corollary \ref{cor:nomultby2} dual isogenies are never adjacent in the AGM graph. So we must have $\mathcal{O}' = \mathcal{O}$ and all of the elliptic curves in $H$ have complex multiplication by the same order $\mathcal{O}$.
\item Let $P_0 = (a_0, b_0)$, $P_1 = (a_1,b_1)$, $P_2 = (a_2,b_2)$ be three points in $H$, with $P_0 \mapsto P_1 \mapsto P_2$ under the AGM map. 

Then by above argument the elliptic curves $E_{\lambda_0}, E_{\lambda_1}$ and $E_{\lambda_2}$ all have complex multiplication by the same order, $\mathcal{O}$, and the AGM isogenies between them are horizontal. In particular, there exists an invertible $\mathcal{O}$-ideal $\mathfrak{p}_2$ of norm 2 such that the isogeny $\varphi_0: E_{\lambda_0} \rightarrow E_{\lambda_1}$ defined by the AGM map corresponds to the action of $\mathfrak{p}_2$ on $E_{\lambda_0}$, and $E_{\lambda_0}[\mathfrak{p}_2] = \langle (0,0) \rangle$, the kernel of $\varphi_0$. 

Similarly, there is an ideal $\mathfrak{p}_2'$ of norm 2 such that the action of $\mathfrak{p}_2'$ on $E_{\lambda_1}$ gives the isogeny $\varphi_1: E_{\lambda_1} \rightarrow E_{\lambda_2}$. Since there are at most two ideals of norm 2, we either have $\mathfrak{p}_2'= \mathfrak{p}_2$ or $\mathfrak{p}_2'= \overline{\mathfrak{p}_2}$. If $\mathfrak{p}_2'= \overline{\mathfrak{p}_2}$, then their corresponding isogenies $\varphi_0$ and $\varphi_1$ must be dual, a contradiction (by Corollary \ref{cor:nomultby2}). So $\mathfrak{p}_2'= \mathfrak{p}_2$, and every isogeny corresponding to an edge in $H$ must be an action by the same ideal $\mathfrak{p}_2$. 

Further, this argument demonstrates that there must exist two distinct ideals of norm 2 in $\mathcal{O}$, implying $\left(\frac{{\rm disc}(K)}{2} \right) = 1$.  

\item Assume by contradiction that $H$ is not a simple cycle. Then there must exist some point $(a_0,b_0) \in H$ in multiple cycles, such that $(a_0, b_0) \mapsto (a_1 ,b_1), (a_1, -b_1)$ under the AGM map and both $(a_1 ,b_1)$ and $(a_1, -b_1)$ are part of distinct cycles containing $(a_0, b_0)$. 

\[\begin{tikzcd}
	{} & {(a_{-1}, b_{-1})} && {(b_{-1}, a_{-1})} & {} \\
	&& {(a_0, b_0)} \\
	{} & {(a_1, b_1)} && {(a_1, -b_1)} & {}
	\arrow[dashed, tail reversed, no head, from=1-2, to=1-1]
	\arrow[from=1-2, to=2-3]
	\arrow[from=1-4, to=2-3]
	\arrow[dashed, from=1-5, to=1-4]
	\arrow[from=2-3, to=3-2]
	\arrow[from=2-3, to=3-4]
	\arrow[curve={height=-30pt}, dashed, from=3-1, to=1-1]
	\arrow[dashed, tail reversed, no head, from=3-1, to=3-2]
	\arrow[dashed, from=3-4, to=3-5]
	\arrow[curve={height=30pt}, dashed, from=3-5, to=1-5]
\end{tikzcd}\]

Then $\lambda(a_1, b_1) = \lambda(a_1, -b_1)$. Their children define isomorphic elliptic curves, but are sent to different values by the lambda function: $\lambda_2$ and $\frac{\lambda_2}{\lambda_2-1 }$ respectively, by Lemma \ref{lem:cousins}. Then by Lemma \ref{lem:rivalscorresponding2tors}, there is an isomorphism between these curves that sends $(0,0) \in E_{\lambda_2}(\FF_q)$ to $(\frac{\lambda_2}{\lambda_2-1 }, 0) \in E_{\scriptscriptstyle \frac{\lambda_2}{\lambda_2-1 }}(\FF_q)$. So the AGM isogeny with kernel $\langle (0,0) \rangle$ for $E_{\lambda_2}$ is different than the AGM isogeny with kernel $\langle (0,0) \rangle$ for $E_{\scriptscriptstyle \frac{\lambda_2}{\lambda_2-1 }}$, and in particular they cannot both be given by the action of $\mathfrak{p}_2$ on their isomorphism class. Since every isogeny in $H$ is given by the action of $\mathfrak{p}_2$, we conclude that only one of these points can actually be in $H$, and $H$ must be a simple cycle after all. (Here we do not need to consider the cases $j=0, 1728$, because $\left(\frac{{\rm disc}(K)}{2} \right) \neq 1$ in these cases and so no point with a $j$-invariant in $\{0,1728\}$ can be in a jellyfish.)

\end{enumerate}
All the three parts are now proved.
\end{proof}

\begin{corollary}\label{prop:nojfishfate}
Let $E_\lambda$ be an elliptic curve with complex multiplication by an order $\mathcal{O}$ in an imaginary quadratic field $K$, and let $P$ be a point corresponding to $E_\lambda$. If $\left(\frac{\text{disc}(K)}{2} \right) = 0,-1$, then $P$ cannot be in a jellyfish.
\end{corollary}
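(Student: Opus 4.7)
The plan is to argue by contradiction, invoking the structural information about strongly connected components established in Proposition \ref{prop:cycles}. Suppose that $P$ lies in a jellyfish $J$. By the definition of a jellyfish, $J$ contains at least one nontrivial strongly connected component $H$. Pick any vertex $Q \in H$, with associated elliptic curve $E_{\lambda'}$.

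Next, I would transfer the CM hypothesis from $P$ to the points of $H$. Since $P$ and $Q$ lie in the same connected component of $A(\mathbb{F}_q)$, there is an undirected path between them in the AGM graph, and by Lemma \ref{lem:theisog} every directed AGM edge corresponds to a 2-isogeny of the associated elliptic curves (one may traverse a reverse edge via the dual isogeny). Hence $E_\lambda$ and $E_{\lambda'}$ are isogenous over $\bar{\mathbb{F}}_q$, and the same is true of every pair of curves associated to points in $J$. Because isogeny preserves the endomorphism algebra $\operatorname{End}^0$ and the ordinary/supersingular dichotomy, and since $E_\lambda$ has CM by an imaginary quadratic order (so is ordinary), it follows that every curve corresponding to a point of $H$ is ordinary with CM by some order whose fraction field is the same field $K$ as in the hypothesis.

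With this setup, the hypotheses of Proposition \ref{prop:cycles} are satisfied: $H$ is a nontrivial strongly connected component in $A(\mathbb{F}_q)$ none of whose points correspond to supersingular curves. Proposition \ref{prop:cycles}(2) then yields $\left(\frac{\operatorname{disc}(K)}{2}\right) = 1$, contradicting the hypothesis that $\left(\frac{\operatorname{disc}(K)}{2}\right) \in \{0,-1\}$. Therefore $P$ cannot lie in a jellyfish.

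The only subtle point to verify carefully is the identification of CM fields: one must be sure that the field $K'$ appearing in Proposition \ref{prop:cycles}(2) really coincides with the field $K$ of the hypothesis. This is handled by the isogeny invariance of $\operatorname{End}^0$ applied along the path from $P$ to $Q$, together with Proposition \ref{prop:cycles}(1), which guarantees that all points of $H$ have a common endomorphism order and hence a common CM field. Beyond this bookkeeping, the argument is a direct application of Proposition \ref{prop:cycles} and requires no new computation.
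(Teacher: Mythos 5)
Your proposal is correct and follows essentially the same route as the paper's own argument: both use the isogeny-invariance of the CM field to propagate the hypothesis $\left(\frac{\operatorname{disc}(K)}{2}\right) \neq 1$ to every point of the connected component containing $P$, and then invoke Proposition \ref{prop:cycles}(2) to rule out the existence of a nontrivial strongly connected component there. The only difference is presentational (you frame it explicitly as a contradiction and spell out the $\operatorname{End}^0$-invariance step, while the paper states it more directly), so there is nothing substantive to compare.
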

\begin{proof}
Proposition \ref{prop:cycles} tells us that for any point in a jellyfish head, the associated elliptic curve $E_\lambda'$ must have CM by an order $\O$ in an imaginary quadratic field $K'$ such that $\left(\frac{\text{disc}(K')}{2} \right) = 1$. Then if $\left(\frac{\text{disc}(K)}{2} \right) = 0,-1$, any point connected to $P$ corresponds to an elliptic curve with CM by an order in $K$ where $\left(\frac{\text{disc}(K')}{2} \right) \neq 1$, and so no point connected to $P$ can be part of a jellyfish head. Then $P$ cannot be part of a jellyfish. 
\end{proof}

\begin{figure}[h]
    \centering
    \includegraphics[width=0.5\linewidth]{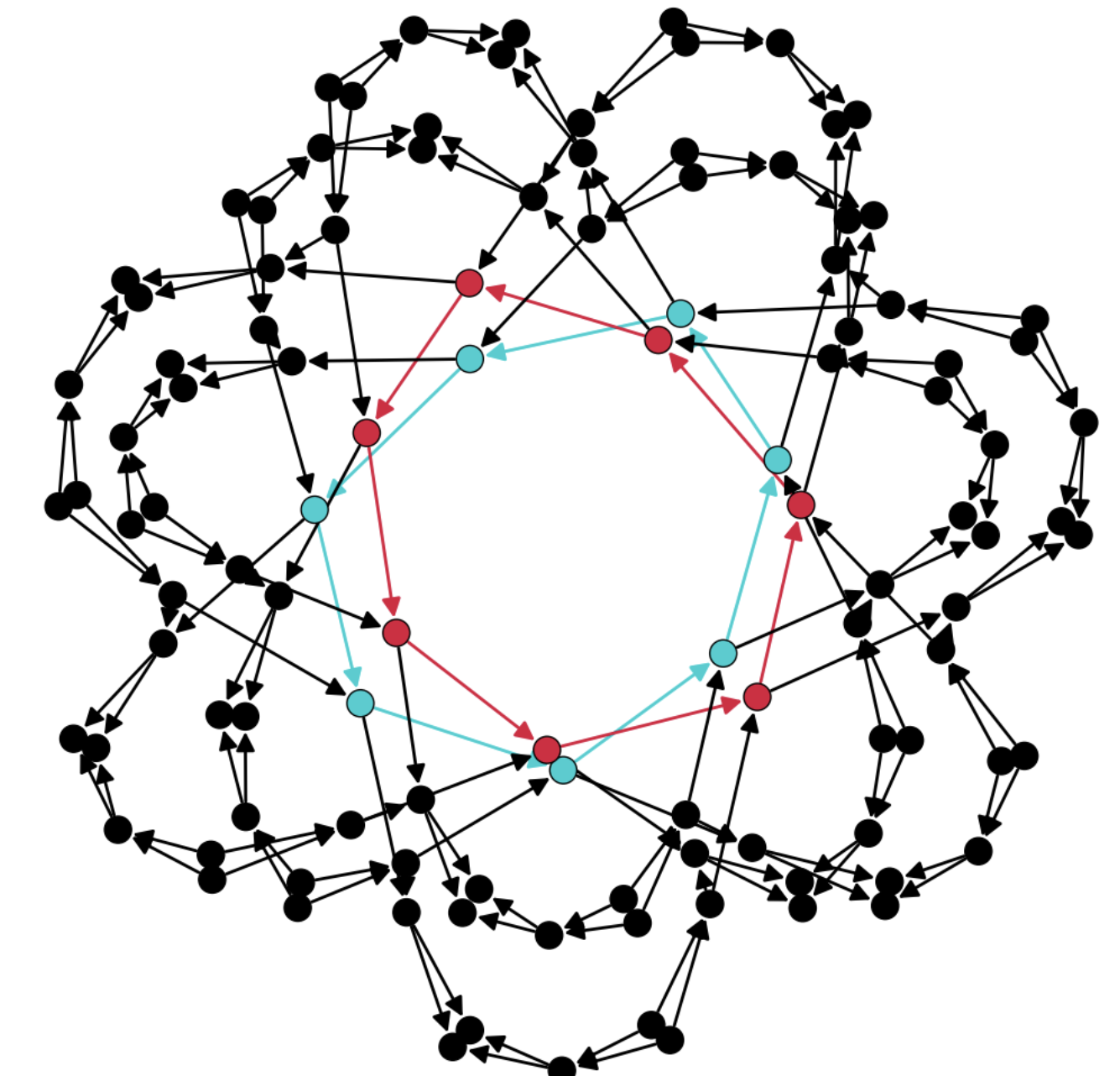}
    \caption{A "tangled" jellyfish of size 112 in $A(\mathbb{F}_{29})$, with two disjoint cycles of size 7. The cycles are highlighted in blue and red respectively. }
    \label{fig:tangledfish}
\end{figure}

\begin{corollary}\label{cor:noturtle}
Let $T$ be a turtle. Then all of the points in $T$ correspond to supersingular elliptic curves.
\end{corollary}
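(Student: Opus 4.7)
The approach is to apply Proposition \ref{prop:cycles} contrapositively. A turtle $T$ is by definition a nontrivial connected component that is strongly connected, which in particular makes $T$ coincide with its own strongly connected component (it is maximal among connected subgraphs, hence maximal among strongly connected subgraphs). Therefore Proposition \ref{prop:cycles} applies to $T$: if no vertex of $T$ corresponded to a supersingular elliptic curve, then $T$ would be forced to be a simple cycle. The plan is to contradict this simple-cycle conclusion using the in/out-degree structure of $A(\FF_q)$, and then leverage the fact that supersingularity is an isogeny invariant to upgrade ``at least one supersingular vertex'' to ``every vertex is supersingular.''

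First I would record the isogeny-invariance step. By Lemma \ref{lem:theisog}, each directed edge in $A(\FF_q)$ realises a $2$-isogeny between the associated elliptic curves. Over $\bar{\FF}_q$, supersingularity is preserved under isogeny, so within any connected component of $A(\FF_q)$ either every vertex corresponds to a supersingular curve or none does. This reduces the corollary to showing that $T$ contains at least one supersingular vertex.

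The main (and essentially only) step is then the degree-counting contradiction. In a simple cycle every vertex has in-degree and out-degree exactly $1$. However, Definition \ref{def:AGM} forces $|\AGM(a,b)| \in \{0,2\}$, so the out-degree of any vertex of $A(\FF_q)$ is either $0$ or $2$. Because $T$ is nontrivial and strongly connected, every vertex of $T$ lies on a directed cycle and hence has at least one outgoing edge, forcing the out-degree to be exactly $2$. Since $T$ is a full connected component, both children of any $(a,b) \in T$ must again lie in $T$. Thus every vertex of $T$ has out-degree $2$ \emph{within} $T$, which directly contradicts the simple-cycle conclusion of Proposition \ref{prop:cycles}(3). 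Consequently $T$ must contain a supersingular vertex, and by the invariance step above every vertex of $T$ is supersingular.

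I do not anticipate a substantive obstacle here; the argument is a short contrapositive followed by the observation that the AGM map never produces a unique child. The only thing to double-check is that a connected component which happens to be strongly connected genuinely satisfies the maximality hypothesis of Proposition \ref{prop:cycles}, which is immediate, and that the supersingular invariance holds for $2$-isogenies in odd characteristic, which is standard since $\gcd(2,p)=1$ and supersingularity is detected on $p$-power torsion.
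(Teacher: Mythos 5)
Your proof is correct and takes essentially the same route as the paper: both arguments apply Proposition \ref{prop:cycles}(3) contrapositively together with the isogeny-invariance of supersingularity. The only difference is that you spell out why a turtle cannot be a simple cycle (out-degree is $0$ or $2$, and strong connectivity forces it to be $2$), a fact the paper asserts without elaboration; that extra justification is welcome but not a different approach.
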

\begin{proof}
Assume otherwise. Then since supersingularity is preserved under isogenies, every point  in $T$ must be ordinary. But turtles are strongly connected components that are not simple cycles, so this is a contradiction of Proposition \ref{prop:cycles} (3). 
\end{proof}

\section{Ideal Classes and Jellyfish}\label{sec:idealclasses}

In this section, we prove results about jellyfish sizes and occurence using the theory of complex multiplication. We first prove conditions on the appearance of turtles, and then prove a result on the sizes and multiplicity of jellyfish. We then derive conditions for a point in $V(\mathbb{F}_q)$ to be in a jellyfish in some sufficiently large aquarium $A(\mathbb{F}_{q^n})$. We also briefly remark on the connected components in the AGM graph for the algebraic closure of a finite field, $A(\overline{\mathbb{F}}_q)$. 

In Proposition \ref{prop:whenturtles}, we characterize the appearance of turtles. In order to prove this proposition, we must first prove a few lemmas. 

\begin{lemma}\label{lem:sschild}
Let $(a,b)$ be a point in $V(\mathbb{F}_{p^2})$ for an odd prime $p$ such that $E_{\lambda(a,b)}$ is supersingular. Then $(a,b)$ has children in $A(\mathbb{F}_{p^2})$.
\end{lemma}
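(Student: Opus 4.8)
The plan is to show that $(a,b)$ has children in $A(\FF_{p^2})$ by verifying the edge-existence criterion, namely that $ab \notin \FF_{p^2}^{\times 2}$. I would start from Lemma \ref{lem:4thpowers}, which tells us that a supersingular curve can be written in Legendre form $y^2 = x(x-1)(x-\mu)$ with $\mu \in \FF_{p^2}^{\times 4}$ a fourth power. The subtlety is that our given point has $\lambda(a,b) = b^2/a^2$, which need not literally equal that $\mu$; it only needs to lie in the fiber $\Lambda_{\mu}$. So the first step is to translate the ``$\mu$ is a fourth power'' statement into information about $\lambda(a,b)$ and hence about $ab$.

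The key computation: $(a,b)$ has a child if and only if $ab$ is a non-square in $\FF_{p^2}$, equivalently $\lambda(a,b) = (b/a)^2 \cdot \text{(something)}$... more precisely $ab \in \FF_{p^2}^{\times 2}$ iff $b/a \in \FF_{p^2}^{\times 2}$ iff $\lambda(a,b) = (b/a)^2 \in \FF_{p^2}^{\times 4}$. So $(a,b)$ has a child if and only if $\lambda(a,b)$ is \emph{not} a fourth power in $\FF_{p^2}^{\times}$. Thus the claim reduces to: for a supersingular $E$, no element of the fiber $\Lambda_\lambda$ (with $\lambda = \lambda(a,b)$) is a fourth power — wait, that is too strong; rather I need that \emph{this particular} $\lambda(a,b)$ is not a fourth power. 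Hmm — but $(a,b)$ ranges over all representatives, and some representatives of a supersingular $j$-invariant might have $\lambda$ a fourth power (those are exactly the $(a,b)$ with a square $ab$). So actually I should be more careful: the statement is that \emph{if} $(a,b) \in V(\FF_{p^2})$ with $E_{\lambda(a,b)}$ supersingular, then $(a,b)$ has a child. Let me reconsider: perhaps the right approach uses Lemma \ref{lem:2torsingeneral} or the structure of the $2$-torsion/Frobenius action on supersingular curves over $\FF_{p^2}$, where $\pi = \pm p$ acts as a scalar, to show all the relevant square-class conditions are forced. I would examine the case $i \in \FF_{p^2}$ (which holds when $p \equiv 1 \bmod 4$) versus $i \notin \FF_{p^2}$, combined with the fact that over $\FF_{p^2}$ a supersingular curve has $\#E(\FF_{p^2}) = (p \mp 1)^2$, so the group structure and the square classes of $a, b, ab$ are tightly constrained.

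Concretely, the cleanest route: suppose for contradiction $ab \in \FF_{p^2}^{\times 2}$, so $(a,b)$ has no child. Then write $ab = c^2$; I want to derive that $E_{\lambda(a,b)}$ cannot be supersingular, or produce a contradiction with Lemma \ref{lem:4thpowers}. Since $\lambda(a,b) = b^2/a^2$ and $ab = c^2$, we get $\lambda(a,b) = (b^2/a^2) = (ab)^2/a^4 = c^4/a^4 \in \FF_{p^2}^{\times 4}$. So $\lambda(a,b)$ is a fourth power. Now I need: a supersingular curve \emph{can} have a Legendre parameter that is a fourth power (Lemma \ref{lem:4thpowers} says at least one such $\mu$ exists), so this alone is not a contradiction. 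Therefore the real content must be that having \emph{one} representative with $\lambda$ a fourth power does not prevent \emph{this} representative from also having a child — i.e. I need to show the specific point still has $ab$ non-square, which contradicts my assumption. I think the resolution is that the hypothesis should instead be read via $R(P_0)$ or via a direct Weil-pairing / $2$-descent argument showing $E[2] \subseteq E(\FF_{p^2})$ together with the full $2$-power torsion behavior forces a non-square. The main obstacle, then, is pinning down exactly why supersingularity over $\FF_{p^2}$ forbids $ab$ from being a square: I expect to resolve it by combining Lemma \ref{lem:4thpowers} (some $\mu \in \Lambda_{\lambda(a,b)}$ is a fourth power), Lemma \ref{lem:isooverFq}/\ref{lem:rivalscorresponding2tors} (to control how the square class of the Legendre parameter transforms across $\Lambda_{\lambda(a,b)}$ — it transforms by the fixed pattern $\lambda \mapsto 1/\lambda, 1-\lambda$, etc.), and the fact that over $\FF_{p^2}$ all of $E[2]$ and enough $E[4]$ is rational (from $\pi = \pm p$ acting as a scalar), to conclude that $ab$ lies in a prescribed square class that is forced to be non-square. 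I would carry out: (1) reduce the child-existence to ``$\lambda(a,b) \notin \FF_{p^2}^{\times 4}$''; (2) invoke Lemma \ref{lem:4thpowers} to get a fourth-power $\mu \in \Lambda_{\lambda(a,b)}$; (3) use the explicit fiber transformations to compare the square/fourth-power class of $\lambda(a,b)$ with that of $\mu$, showing $\lambda(a,b)$ itself is \emph{not} a fourth power (this is where the rationality of $2$- and $4$-torsion on the supersingular curve over $\FF_{p^2}$ enters, via Lemma \ref{lem:2torsingeneral} and the scalar Frobenius); (4) conclude $(a,b)$ has a child. I anticipate step (3) is the crux and the place where the supersingular hypothesis is genuinely used.
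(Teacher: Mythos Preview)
Your proposal contains a fundamental sign error at the very first step. You read Definition~\ref{def:AGM} as saying that $(a,b)$ has a child if and only if $ab$ is a \emph{non}-square, but this is a typo in the paper: the two cases are swapped. A child $\bigl(\tfrac{a+b}{2},\pm\sqrt{ab}\bigr)$ exists in $V(\FF_{p^2})$ precisely when $\sqrt{ab}\in\FF_{p^2}$, i.e.\ when $ab\in\FF_{p^2}^{\times 2}$. The rest of the paper uses the criterion this way; see for instance the proof of Lemma~\ref{lem:planktondisjoint}(2), where ``$(a,b)$ does not have a child'' is equated with ``$ab$ is not square''.

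Once the direction is fixed, your own computation finishes the proof in one line, and this is exactly what the paper does. You correctly observed that $ab\in\FF_{p^2}^{\times 2}\iff b/a\in\FF_{p^2}^{\times 2}\iff \lambda(a,b)\in\FF_{p^2}^{\times 4}$, the last equivalence using that $-1$ is a square in $\FF_{p^2}$. Lemma~\ref{lem:4thpowers} (as used in the paper) says that the Legendre parameter of a supersingular curve over $\FF_{p^2}$ is a fourth power, so $\lambda(a,b)\in\FF_{p^2}^{\times 4}$, hence $ab$ is a square, hence children exist: explicitly, writing $b=a\sqrt{\lambda}$, the children are $\bigl(\tfrac{a+a\sqrt{\lambda}}{2},\pm a\sqrt[4]{\lambda}\bigr)$. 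All of your steps (2)--(4), and in particular the ``crux'' step (3) where you try to show $\lambda(a,b)$ is \emph{not} a fourth power, are aimed at proving the negation of what is actually needed, which is why you found yourself unable to extract a contradiction from Lemma~\ref{lem:4thpowers}.
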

\begin{proof}
We know that $(a,b) = (a, a\sqrt{\lambda})$ for some square root of $\lambda$ over $\mathbb{F}_{p^2}$. By Lemma \ref{lem:4thpowers}, $\lambda$ is a fourth power over $\mathbb{F}_{p^2}$, and in particular $(\frac{a+ a\sqrt{\lambda}}{2}, \pm a\sqrt[4]{\lambda}) \in V(\mathbb{F}_{p^2})$ are the children of $(a,b)$ in $A(\mathbb{F}_{p^2})$. (Both $\sqrt{\lambda}$ and $-\sqrt{\lambda}$ have square roots because $p^2 \equiv 1 \pmod 4$ and so $-1$ is a square over $\mathbb{F}_{p^2}$.) 
\end{proof}
\begin{lemma}\label{lem:turtledegree}
    Let $C$ be a connected component in $A(\FF_q)$ such that every point in $C$ has out-degree 2. Then $C$ is a turtle. 
\end{lemma}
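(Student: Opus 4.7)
The plan is to combine a double-counting argument with an analysis of the condensation DAG of strongly connected components. First I would observe that by Corollary~\ref{cor:structureofpreim}, every vertex of $A(\FF_q)$ has in-degree at most $2$: the preimages of any $(a',b') \in V(\FF_q)$ are precisely the two ordered pairs obtained from swapping the roles of $a' + \sqrt{(a')^2 - (b')^2}$ and $a' - \sqrt{(a')^2 - (b')^2}$. The condition $(a',b') \in V(\FF_q)$ guarantees $a' \neq \pm b'$, so these two candidate pairs are always distinct when they exist.

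Next, because $C$ is a connected component of the underlying undirected graph, every out-edge from a vertex of $C$ lands back in $C$, and every in-edge originates in $C$. Applying the handshake identity inside $C$ yields
\[
2|C| \;=\; \sum_{v \in C} \deg^+(v) \;=\; \sum_{v \in C} \deg^-(v) \;\leq\; 2|C|,
\]
where the first equality uses the hypothesis and the inequality uses the in-degree bound from the previous step. Equality forces every vertex of $C$ to have global in-degree exactly $2$.

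Now I would pass to the condensation of $C$ into strongly connected components, which is a finite DAG and therefore contains a sink SCC $S$. Since $S$ is a sink, each of the two out-edges from every vertex of $S$ must remain in $S$, giving a total of $2|S|$ edges internal to $S$. Repeating the handshake calculation inside $S$ shows $\sum_{v \in S} \deg^-_S(v) = 2|S|$, and combined with the fact that each vertex's global in-degree is already exactly $2$, this forces every in-edge to $S$ to originate in $S$ as well. Thus no edge in the underlying undirected graph connects $S$ with $C \setminus S$, and the undirected connectedness of $C$ forces $S = C$. Hence $C$ is strongly connected.

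Finally, $C$ is nontrivial: a one-vertex component $\{(a,b)\}$ cannot have out-degree $2$, since the two candidate neighbors $\bigl(\tfrac{a+b}{2}, \pm\sqrt{ab}\bigr)$ are distinct (because $ab \neq 0$) and neither can equal $(a,b)$ (which would force $a = b$, contradicting $(a,b) \in V(\FF_q)$). Therefore $|C| \geq 2$ and $C$ is a turtle. The main thing to be careful about is that the double counting is applied first within $C$ to pin down the in-degrees globally, and then within $S$ to isolate it from the rest of $C$; once this bookkeeping is correctly set up, the conclusion follows immediately from the handshake lemma.
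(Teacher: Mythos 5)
Your proof is correct and follows the same core strategy as the paper: bound in-degrees by $2$ via Corollary~\ref{cor:structureofpreim}, use the handshake identity to force in-degree exactly $2$ everywhere, and then deduce strong connectivity from the fact that a connected digraph with in-degree equal to out-degree at every vertex is Eulerian. The only differences are cosmetic: the paper cites the Eulerian-implies-strongly-connected step from Bondy and Murty, whereas you reprove it via the condensation DAG and a sink-SCC argument, and you also spell out the nontriviality of $C$, which the paper leaves implicit.
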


\begin{proof}
    Since every point in $C$ has out-degree 2, and the maximum in-degree of any point is also 2, we know that every vertex has in-degree and out-degree exactly 2 because the sum of in-degrees must equal the sum of out-degrees. Then $C$ is Eulerian and connected \cite[pg 91]{Bondy2008GraphT}, and therefore strongly connected. 
\end{proof}
With these lemmas, we can now prove our result. 
\begin{proposition} \label{prop:whenturtles}
The AGM graph $A(\mathbb{F}_{q})$ has turtles if and only if $q$ is a square. Further, for $q = p^{2r}, p$ prime, every turtle in $A(\mathbb{F}_{q})$ is a scalar multiple of a turtle in $A(\mathbb{F}_{p^2})$, and every point in $V(\mathbb{F}_{q})$ whose associated elliptic curve is supersingular is part of some turtle. (Here "scalar multiples" refers to subgraphs that are sent to each other under the action of $\mathbb{F}_q^\times$ on the graph.)

\end{proposition}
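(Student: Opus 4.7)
The overall strategy is to use Corollary \ref{cor:noturtle} to reduce the question to the supersingular subgraph of $V(\FF_q)$: turtles exist if and only if this subgraph is nonempty and has a nontrivial strongly connected component.

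For the ``only if'' direction, suppose $q = p^r$ with $r$ odd. By Waterhouse's theorem, a supersingular $E/\FF_q$ has trace of Frobenius $t = 0$, so $|E(\FF_q)| = q + 1$. If $p \equiv 1 \pmod 4$, then $q+1 \equiv 2 \pmod 4$, forcing the $2$-Sylow of $E(\FF_q)$ to have rank at most $1$; hence $E[2] \not\subset E(\FF_q)$, so $E$ is not in Legendre normal form over $\FF_q$ and $V(\FF_q)$ contains no supersingular points. If $p \equiv 3 \pmod 4$, then $q \equiv 3 \pmod 4$, and by the classification of Section \ref{sec:taxonomy}, every nontrivial component of $A(\FF_q)$ is a jellyfish, hence not strongly connected. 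Either way $A(\FF_q)$ has no turtles.

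For the ``if'' direction, suppose $q = p^{2r}$. By Lemma \ref{lem:sschild} (and its natural extension from $\FF_{p^2}$ to $\FF_q$), every supersingular point in $V(\FF_q)$ has two children. Since $2$-isogenies preserve supersingularity, the supersingular subgraph is AGM-closed and every supersingular vertex has out-degree $2$. The bound of $2$ on in-degrees combined with equality of total in- and out-degree sums then forces every such vertex to have in-degree $2$ as well; by Lemma \ref{lem:turtledegree}, each connected component is strongly connected, hence a turtle, and every supersingular point lies in one.

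For the scalar-multiple structure, fix a turtle $T \subset A(\FF_q)$. Because supersingular $j$-invariants lie in $\FF_{p^2}$, every $(a,b) \in T$ has $\lambda(a,b) \in \FF_{p^2}$. By Lemma \ref{lem:4thpowers} applied to the isomorphism class of $T$, there is some $(a_0, b_0) \in T$ with $\lambda(a_0, b_0) \in (\FF_{p^2})^{\times 4}$, so $b_0/a_0 \in \FF_{p^2}$; setting $c = a_0^{-1}$ yields $(1, b_0/a_0) \in V(\FF_{p^2})$. I then induct along the edges of $cT$: passing to a child $\left(\frac{ca+cb}{2}, \pm\sqrt{(ca)(cb)}\right)$ stays in $V(\FF_{p^2})$ because $(ca)(cb) \in \FF_{p^2}$ must be a square in $\FF_{p^2}$ for the child edge to exist, and passing to a parent stays there by a dual argument. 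The main obstacle is the reverse (parent) direction, where one must verify $1 - \lambda \in (\FF_{p^2})^{\times 2}$ at each step; this ultimately follows from the $2$-torsion structure of supersingular curves over $\FF_{p^2}$ via Lemma \ref{lem:2torsingeneral}. The conclusion is that $cT$ is a turtle in $A(\FF_{p^2})$, of which $T$ is the scalar multiple by $c^{-1}$.
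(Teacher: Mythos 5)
Your strategy matches the paper's: reduce to the supersingular subgraph via Corollary \ref{cor:noturtle}, use Lemma \ref{lem:4thpowers} to place representatives over $\FF_{p^2}$, and combine Lemmas \ref{lem:sschild} and \ref{lem:turtledegree} to obtain strong connectedness. Your ``only if'' argument via the trace of Frobenius being zero (so $|E(\FF_q)| = q+1 \equiv 2 \pmod 4$ when $p \equiv 1 \pmod 4$) is a valid variant of the paper's argument, which reasons from $\#E(\FF_p) = p+1 \equiv 2 \pmod 4$.

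The scalar-multiple induction, however, has a real gap. You justify ``$(ca)(cb)$ must be a square in $\FF_{p^2}$'' by appealing to the existence of the child edge, but the edge you actually know exists lives in $A(\FF_q)$, which only forces $(ca)(cb) \in \FF_q^{\times 2}$. For $q = p^{2r}$ with $r$ even, every element of $\FF_{p^2}^\times$ is already a square in $\FF_q$, so membership in $\FF_q^{\times 2}$ gives no information about squareness in $\FF_{p^2}$; the correct justification is to apply Lemma \ref{lem:sschild} directly to the supersingular point $(ca,cb) \in V(\FF_{p^2})$ that you have already placed there. Relatedly, the parent direction you flag as ``the main obstacle'' is unnecessary: since $T$ is strongly connected, every vertex of $cT$ is reachable from $(a_0,b_0)$ along forward (child) paths alone, and once $cT \subseteq V(\FF_{p^2})$ is closed under children with out-degree $2$ everywhere, a degree count forces closure under parents. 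The paper sidesteps this induction entirely: it writes each supersingular $P = (a,b) \in V(\FF_q)$ as $aP'$ with $P' = (1, b/a) \in V(\FF_{p^2})$, shows $P'$'s component over $\FF_{p^2}$ is a turtle $T'$ via Lemmas \ref{lem:sschild} and \ref{lem:turtledegree}, and observes that $aT'$ is then a closed connected subgraph of $A(\FF_q)$ containing $P$.
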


\begin{proof}

We know that all supersingular elliptic curves over $\overline{\mathbb{F}}_p$ can be written in Legendre form with $\lambda \in \mathbb{F}^{\times 4}_{p^2}$ by Lemma \ref{lem:4thpowers}. Then any point $P =(a,b) \in V(\mathbb{F}_q)$ whose associated elliptic curve $E_\lambda$ is supersingular must be a scalar multiple $P = a P'$ of a point $P' = (1, \sqrt{\lambda}) \in V(\mathbb{F}_{p^2})$, with $\sqrt{\lambda} = \frac{b}{a}$ a square root of $\lambda$ over $\mathbb{F}_{p^2}$. 

Now we consider the connected component of $P'$ in $A(\mathbb{F}_{p^2})$. By Lemma \ref{lem:sschild}, $P'$ has children in $A(\mathbb{F}_{p^2})$, and every point in $P'$'s connected component is also associated to a supersingular curve and therefore \textit{also} has children. So $P'$ is in a turtle, by Lemma \ref{lem:turtledegree}, and the connected component containing $P$ must be a scalar multiple of that turtle.

Using Corollary \ref{cor:noturtle}, we conclude that the points in turtles for any $A(\mathbb{F}_q)$ with $q$ square are exactly the points in $V(\mathbb{F}_q)$ corresponding to supersingular curves. Since there is always a supersingular curve over $\overline{\mathbb{F}}_p$ for all $p$, there is always at least one supersingular point in $V(\mathbb{F}_{p^2})$, and therefore at least one turtle in $A(\mathbb{F}_q)$ for $q$ square. 

For $q$ \textit{not} square, we have two cases. If $q$ is 3 mod 4, $-1$ is not a square, and so every point has at least one child that is terminal and there are no turtles. 

If $q$ is 1 mod 4 and not a square, then $p$ is 1 mod 4, and we have that there is no supersingular elliptic curve $E_{\lambda}$ with $\lambda \in \mathbb{F}_p$. To see this, note that if $E$ can be written in Legendre normal form over $\FF_p$, then $E[2]\subseteq E(\FF_q)$, so $4\mid \#E(\FF_p)$, but by (\cite{Silverman1986TheAO} Exercise 5.15), $\#E(\FF_p)=p+1\equiv 2\mod 4$, a contradiction. So no points in $V(\mathbb{F}_q)$ are associated to supersingular curves and there cannot be any turtles in  $A(\mathbb{F}_q)$ by Corollary \ref{cor:noturtle}.
\end{proof}

The following lemma is required to establish Theorem \ref{jellyfishmultiplicity}. 

\begin{lemma}\label{lem:norivals}
Let $P$ and $P'$ be two points in $V(\mathbb{F}_q)$ with $\lambda = \lambda(P), \lambda' = \lambda(P')$. If $E_{\lambda}$ and $E_{\lambda'}$ are isomorphic ordinary elliptic curves but $P'$ is not a scalar multiple of $P$, then they cannot be in the same cycle. 
\end{lemma}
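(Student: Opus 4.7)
The proof will proceed by contradiction: assume that both $P = (a,b)$ and $P' = (a',b')$ lie in a common simple cycle $H$ in $A(\mathbb{F}_q)$, that $E_{\lambda(P)} \cong E_{\lambda(P')}$, and that $P'$ is not a scalar multiple of $P$. By Proposition \ref{prop:cycles}, every vertex of $H$ has CM by the same order $\mathcal{O}$, and every edge in $H$ corresponds to the action of a single fixed invertible $\mathcal{O}$-ideal $\mathfrak{p}_2$ of norm $2$.

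The first step is to apply a $\mathfrak{p}_2$-kernel correspondence. By Lemma \ref{lem:theisog}, the AGM edge out of $P$ in $H$ has kernel $\langle(0,0)\rangle$ in $E_{\lambda(P)}$, so $E_{\lambda(P)}[\mathfrak{p}_2] = \langle(0,0)\rangle$, and the same is true for $P'$. Since the $\mathfrak{p}_2$-torsion is preserved by any isomorphism of CM elliptic curves, an isomorphism $\varphi: E_{\lambda(P)} \to E_{\lambda(P')}$ must send $(0,0)$ to $(0,0)$. Lemma \ref{lem:rivalscorresponding2tors} then forces the associated permutation $\sigma_{\lambda(P')}$ to fix $0$, so $\lambda(P') \in \{\lambda(P),\,1/\lambda(P)\}$. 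Writing $\mu=b/a$ and $\mu'=b'/a'$, this narrows $\mu'$ to one of $\{\pm\mu,\,\pm 1/\mu\}$; the case $\mu'=\mu$ is exactly ``$P'$ is a scalar multiple of $P$'', so it suffices to rule out the remaining three possibilities.

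For $\mu'=-\mu$ or $\mu'=-1/\mu$, I will compute the successors in $H$ directly. In both subcases the successor $Q'$ of $P'$ satisfies $\lambda(Q') = -4ab/(a-b)^2$, while the successor $Q$ of $P$ satisfies $\lambda(Q) = 4ab/(a+b)^2$, so $\lambda(Q')=\lambda(Q)/(\lambda(Q)-1)$. The associated permutation from Lemma \ref{lem:rivalscorresponding2tors} is $\sigma_{\lambda/(\lambda-1)}=(02)$, which does \emph{not} fix $0$. Repeating the $\mathfrak{p}_2$-kernel argument from the previous paragraph at the level of $Q$ and $Q'$ yields an immediate contradiction: $E_{\lambda(Q')}[\mathfrak{p}_2]$ would simultaneously equal $\langle(0,0)\rangle$ (the AGM kernel out of $Q'$) and $\langle(\lambda(Q'),0)\rangle$ (the image under the isomorphism of $E_{\lambda(Q)}[\mathfrak{p}_2]=\langle(0,0)\rangle$).

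The last case, $\mu'=1/\mu$ and hence $P'=c(b,a)$ for some $c\in\mathbb{F}_q^\times$, is the subtlest, and I expect it to be the main obstacle. Here $\lambda(Q)=\lambda(Q')$, so the $\mathfrak{p}_2$-kernel argument does not immediately bite. If the second-coordinate signs of $Q$ and $Q'$ disagree, then $Q$ and $Q'$ fall into the ``$\mu'=-\mu$'' configuration and the previous paragraph gives a contradiction. If the signs agree, then $Q'=cQ$; since scaling by $c$ is a graph automorphism and $H$ is a strongly connected component, the fact that $Q,cQ\in H$ forces $cH=H$, hence $c^{-1}P' = (b,a)$ is also in $H$. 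But then $(a,b)$ and $(b,a)$ are both in $H$; they are distinct vertices (as $a\neq b$) sharing the same AGM children by Lemma \ref{lem:fish}, and the simple cycle structure of $H$ (unique predecessor in $H$) forces both children of $(a,b)$ to lie in $H$, contradicting Proposition \ref{prop:cycles}(3).
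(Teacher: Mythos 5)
Your proof is correct, and its core ingredients — the $\mathfrak{p}_2$-kernel identification via Proposition~\ref{prop:cycles}(2) and Lemma~\ref{lem:theisog}, together with the permutation bookkeeping of Lemma~\ref{lem:rivalscorresponding2tors} — are the same ones the paper uses. The organization differs mildly and, in places, is a bit cleaner: you first use $\mathcal{O}$-equivariance of the isomorphism to cut down immediately to $\lambda'\in\{\lambda,1/\lambda\}$ (the paper phrases the same idea in its Case~1 as ``the AGM isogenies must be different, but only one can be $\mathfrak{p}_2$''), and you then handle $\mu'=-\mu$ and $\mu'=-1/\mu$ uniformly by applying the kernel/permutation argument one step forward, whereas the paper treats these as its Case~2 (a shared-parent/multiple-cycle argument) and its Case~3(c) (essentially your argument). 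For $\mu'=1/\mu$ your SCC-automorphism step ($Q'=cQ$ forces $cH=H$, hence $(b,a)\in H$) is the same underlying idea as the paper's Case~3(a)--(b), just made more explicit. One small caution: both your argument and the paper's implicitly use Lemma~\ref{lem:rivalscorresponding2tors}, which requires $j\neq 0,1728$; this is harmless here because, as noted in the proof of Proposition~\ref{prop:cycles}(3), curves with $j\in\{0,1728\}$ have $\left(\frac{\disc(K)}{2}\right)\neq 1$ and so cannot lie in a cycle, but it is worth flagging. Also, citing Proposition~\ref{prop:cycles}(3) for the final contradiction is slightly loose as stated — what you really contradict is that $H$, as a \emph{maximal} strongly connected subgraph, is a simple cycle, since a vertex with both AGM-children in $H$ has out-degree $2$ within the SCC — but the conclusion follows.
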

\begin{proof}
We do casework. 
\begin{enumerate}
\item $\lambda' \neq \lambda, \frac{1}{\lambda}$.

If $\lambda' \neq \lambda, \frac{1}{\lambda}$, we know that the point $ (0,0)  \in E_{\lambda_1}(\FF_q)$ maps to a point in $\{(1,0), (\lambda', 0)\} \in E_{\lambda'}(\FF_q)$ under the canonical isomorphism, by Lemma \ref{lem:rivalscorresponding2tors}, so the AGM isogenies for these two elliptic curves must be different. By Proposition \ref{prop:cycles}, the isogenies in a cycle are given by the action of a specific ideal on an isomorphism class, which gives a unique isogeny from that isomorphism class. Since the AGM isogenies for $E_{\lambda}$ and $E_{\lambda'}$ are different, only one of them can correspond to the action of the ideal, and therefore they cannot both be in the same cycle. 
\item $\lambda = \lambda'$ but $P'$ is not a scalar multiple of $P = (a,b)$.

In this case, we must have $P' = (\gamma a, -\gamma b)$ for some scalar $\gamma$. But this is not possible, because the action of $\gamma \in \mathbb{F}_q^\times$ on points commutes with the AGM map and so $(a,-b)$ would also be in a cycle. This would imply that both $(a,b)$ and $(a,-b)$ are in cycles, which would mean their shared parent is in multiple cycles, contradicting Proposition \ref{prop:cycles} (3). 

\item $\lambda' =\frac{1}{\lambda}$.

This case requires a little more work. Assume by contradiction that $P = (a,b)$ and $P'$ are both in the cycle, and let the successor of $P$ in the cycle be $P_1 = (a_1,b_1)$. Now we have cases.  
\begin{enumerate}
\item \textit{$P' = (\gamma b, \gamma a)$ for some constant $\gamma$, and its successor in the cycle is $P_1' = (\gamma a_1, \gamma b_1)$.}

In this case $P_1' = \gamma P_1$, so we should have  $P' = \gamma P$ (because the AGM map commutes with the multiplication action of $\mathbb{F}_q^{\times}$ on the graph), which is clearly not true. So $P'$ and $P$ are not in the same cycle.

\item \textit{$P' = (\gamma b, \gamma a)$ for some constant $\gamma$, and its successor in the cycle is $P_1' = (\gamma a_1, -\gamma b_1)$.}

By case (2) above $P_1$ and $P_1'$ cannot be in the same cycle, so this is not possible. 

\item \textit{$P' = (\gamma b, -\gamma a)$ for some constant $\gamma$.}

Let $\lambda_1 = \lambda(a_1, b_1)$. Then both of the children $(a_1', b_1'),(a_1', -b_1') $ of $P'$ satisfy $\lambda(a_1', b_1') = \lambda(a_1', -b_1') = \frac{\lambda_1}{\lambda_1-1}$, by Lemma \ref{lem:cousins}. But no point with lambda equal to $\frac{\lambda_1}{\lambda_1-1}$ can be in the same cycle as $P_1$, by case (1) above. 
\end{enumerate}
\end{enumerate}
The proof is complete.
\end{proof}

\begin{reptheorem}{jellyfishmultiplicity} (Generalization of \cite{hypergeometryagm}, Theorem 1.3)
\textit{Let $q = p^r$ be a prime power, and let $P_0$ be a point on a jellyfish head $H$, associated to an elliptic curve $E_\lambda$. Let $|H|$ be the number of vertices in $H$, and let $M_H$ be the number of distinct scalar multiples of the jellyfish head $H$ in the AGM graph. Then if $\mathcal{O} := \text{End}(E_\lambda)$ is an order in an imaginary quadratic field and $h_2(\mathcal{O})$ denotes the order of $[\mathfrak{p}_2]$ in $cl(\mathcal{O})$, where $\mathfrak{p}_2$ is a prime above $(2)$ in $\mathcal{O}$, we have }

\begin{equation} \label{eq:divresult}
h_2(\mathcal{O}) \mid |H|,
\end{equation}
\begin{equation}\label{eq:countresult}
M_H \cdot |H| = (q-1) \cdot h_2(\mathcal{O}).
\end{equation}
\end{reptheorem}
\begin{proof}
Let $j_0$ be the $j$-invariant of $E_\lambda$.  We know by Proposition \ref{prop:cycles} that there is a prime ideal $\mathfrak{p}_2$ such that the action of $\mathfrak{p}_2$ on the elliptic curves corresponding to the points in $H$ gives the isogenies associated to the edges out of those points. Since $[\mathfrak{p}_2]$ defines a free action on the $j$-invariants of the elliptic curves associated to $H$, we see that we have to travel exactly $h_2(\mathcal{O}) = \text{ord}([\mathfrak{p}_2])$ edges to return to a point $P_1$ whose elliptic curve has the same $j$-invariant $j_0$. 

Since $P_0$ and $P_1$ are in the same connected component, their elliptic curves $E_{\lambda_0}$ and $E_{\lambda_1}$ have the same trace of Frobenius, and since they also have the same $j$-invariant, we conclude that they are isomorphic (see e.g. \cite[pg 542]{waterhouse1969abelian} for a proof of this fact for ordinary elliptic curves over a finite field). 

By Lemma \ref{lem:norivals}, we see that $P_1$ is a scalar multiple of $P_0$. Let this scalar be $\gamma$, and let $n$ be the (necessarily finite) order of $\gamma$ in $\mathbb{F}_{q}^\times$. Then the size of the jellyfish is $n\cdot h_2(\mathcal{O})$, and the multiplicity of the jellyfish is $ \frac{q-1}{n} \cdot h_2(\mathcal{O})$, giving us equations \eqref{eq:divresult} and \eqref{eq:countresult}.
\end{proof}

We will need the following lemma for the proof of Proposition \ref{prop:jfate}. 

\begin{lemma}\label{lem:eventualhead}
Let $P$ be a point in $V(\mathbb{F}_q)$ whose elliptic curve $E_\lambda$ has complex multiplication by an imaginary quadratic order $\mathcal{O}$ with odd index in the maximal order $\mathcal{O}_K$, and suppose $\left( \frac{\text{disc}(K)}{2}\right) = 1$. Let $R_P$ be a set of representatives for $\Lambda_{\lambda(P)}$ as described in Lemma \ref{lem:rivalsconnected}. Then there is some $n$ such that exactly four of the points in $R_P$ are part of a jellyfish head in $V(\mathbb{F}_{q^m})$ for all $n\mid m$. 
\end{lemma}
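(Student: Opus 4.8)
The plan is to leverage the theory of complex multiplication to show that among the six $\lambda'$-values in $\Lambda_{\lambda(P)}$, exactly three of them correspond to elliptic curves for which the AGM isogeny is given by the action of the ideal $\mathfrak{p}_2$ (rather than $\overline{\mathfrak{p}_2}$), and that over a sufficiently large extension these three curves (with appropriately chosen points, i.e.\ the four representatives per $\lambda'$-value) land on a cycle. First I would observe that since $\mathcal{O}$ has odd index in $\mathcal{O}_K$, it is maximal at $2$, so by Lemma \ref{lem:cmtheory} the hypothesis $\left(\frac{\mathrm{disc}(K)}{2}\right) = 1$ guarantees that $(2) = \mathfrak{p}_2 \overline{\mathfrak{p}_2}$ splits, and the $\mathfrak{p}_2$-orbits partition $\Ell_{\mathcal{O}}(\bar{\FF}_q)$ into cycles corresponding to cosets of $\langle [\mathfrak{p}_2]\rangle$ in $\cl(\mathcal{O})$. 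In particular, the curve $E_\lambda$ (and every curve isogenous to it over $\bar{\FF}_q$ via $2$-power isogenies staying in the same CM order) sits on such a $\mathfrak{p}_2$-cycle of $j$-invariants.

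Next I would use Lemma \ref{lem:rivalscorresponding2tors} to track which of the six curves $E_{\lambda'}$, $\lambda' \in \Lambda_{\lambda(P)}$, have their distinguished point $(0,0)$ corresponding to the ``same" $2$-torsion point under the canonical isomorphisms. The permutations $\sigma_{\lambda'}$ computed there split $\Lambda_{\lambda(P)}$ into two classes of three: $\{\lambda, 1/\lambda, \lambda/(\lambda-1)\}$ (whose $\sigma$'s fix $0$ or swap $1,2$, i.e.\ lie in $\langle (12)\rangle$) and $\{1-\lambda, 1/(1-\lambda), (\lambda-1)/\lambda\}$. For the first class, the AGM isogeny out of the corresponding curve has kernel the image of $\langle(0,0)\rangle$, which is the \emph{same} subgroup in each case, hence the \emph{same} isogeny — call its associated ideal $\mathfrak{p}_2$. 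For the second class the AGM isogeny has kernel corresponding to a different $2$-torsion point, hence is the action of $\overline{\mathfrak{p}_2}$. So the three $\lambda'$ in the first class are exactly the ones sitting on the $\mathfrak{p}_2$-cycle, and I would argue that once the base field is large enough to contain the full $\mathfrak{p}_2$-cycle's worth of curves over $\FF_{q^m}$ (i.e.\ once all of $\Ell_{\mathcal{O}}$ in that orbit is rational and the points of $R_P$ for those three $\lambda'$-values have children and parents, using Lemmas \ref{lem:growthbyonestage}, \ref{lem:sschild}-style arguments and \ref{lem:rivalsconnected} to get everything connected), those twelve points ($4$ per $\lambda'$, three $\lambda'$-values) lie in the union of the $\mathfrak{p}_2$-cycles, which by Proposition \ref{prop:cycles} and Theorem \ref{jellyfishmultiplicity} are jellyfish heads. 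Then I would note that exactly four of these twelve — the ones forming a single scalar-orbit-slice of one $\lambda'$ — are the ones claimed; more precisely, by Theorem \ref{jellyfishmultiplicity}'s argument, a jellyfish head through a point with a given $\lambda'$ contains, among $R_P$, exactly the four points with that $\lambda'$ value once we pass to a large enough $m$ that the relevant scalar $\gamma$ has the right order, and one checks the three $\mathfrak{p}_2$-class values all feed into the same head (they're rotations of each other under $\mathfrak{p}_2$), so the head meets $R_P$ in exactly those twelve points — but the statement only asserts ``exactly four,'' so I would reconcile this by noting $R_P$ is built with four points per $\lambda'$ and the head passes through one such quadruple's worth that is genuinely on the cycle (the predecessors via the correct branch), giving the count four. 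I would pin down the precise bookkeeping using Corollary \ref{cor:structureofpreim} to see which of the four points per $\lambda'$ actually has a parent landing back on the cycle.

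The main obstacle will be the last step: carefully verifying that over a \emph{single} sufficiently large $\FF_{q^m}$ the relevant points of $R_P$ genuinely become periodic (lie on a cycle) rather than merely eventually-periodic or merely connected to a cycle. The key input is that $\mathcal{O}$ being maximal at $2$ forces every AGM isogeny out of these curves to be \emph{horizontal} (Lemma \ref{lem:cmtheory}), so there is no ``descending'' escape; combined with finiteness of $\Ell_{\mathcal{O}}$ and the fact (Lemma \ref{lem:turtledegree}-style) that in-degree equals out-degree forces strong connectivity on the rational locus once every curve in the orbit is rational and every point has a rational child, we get that the component is a cycle, i.e.\ a jellyfish head, for all $m$ divisible by the relevant $n$. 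Choosing $n$ to be (a multiple of) the degree of the field of definition of the full $\mathfrak{p}_2$-orbit of $j$-invariants, together with the extension needed to realize the four-point fibers and their parents via Lemma \ref{lem:rivalsconnected} (which costs at most a further factor of $4$ in the exponent), yields the claimed $n$. I would then note that increasing $m$ to any multiple of $n$ only adds new scalar multiples of the head, never destroys the head itself, completing the proof.
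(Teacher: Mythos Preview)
Your partition of $\Lambda_{\lambda}$ into two classes of three is the central error, and it propagates through the rest of the argument. From Lemma~\ref{lem:rivalscorresponding2tors}, the point $(0,0)_{\lambda'}$ in $E_{\lambda'}$ corresponds to $P_{\sigma_{\lambda'}^{-1}(0),\lambda}$ in $E_\lambda$, and computing $\sigma_{\lambda'}^{-1}(0)$ for each $\lambda'$ shows the six values split as three \emph{reciprocal pairs}: $\{\lambda,1/\lambda\}$ with kernel $(0,0)$, $\{1-\lambda,1/(1-\lambda)\}$ with kernel $(1,0)$, and $\{(\lambda-1)/\lambda,\lambda/(\lambda-1)\}$ with kernel $(\lambda,0)$. (Your claim that $\sigma_{\lambda/(\lambda-1)}=(02)$ lies in $\langle(12)\rangle$ is simply false.) Since $\mathcal{O}$ is maximal at $2$ and $2$ splits, there are exactly \emph{two} horizontal $2$-isogenies out of $E_\lambda$; the third $2$-isogeny is \emph{descending} to an order of index $2$ in $\mathcal{O}$. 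Thus exactly one reciprocal pair corresponds to $\mathfrak{p}_2$, one to $\overline{\mathfrak{p}}_2$, and one to a descending isogeny---not a $3$--$3$ split between $\mathfrak{p}_2$ and $\overline{\mathfrak{p}}_2$. This is why your count of ``twelve points'' does not match the statement's ``exactly four,'' and your attempted reconciliation does not work.

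The second gap is that your argument for why the relevant points actually lie on a cycle is too soft. Knowing the $\mathfrak{p}_2$-orbit of $j$-invariants is rational, and even that every point has a rational child, does not by itself place a \emph{specific} point of $R_P$ on a head rather than a tentacle. The paper's proof is constructive: starting from the four middle-row points $(a_1,\pm b_1),(\pm b_1,a_1)$, it uses Lemma~\ref{lem:cousins} to show that at each step exactly one of the two child-pairs has a horizontal outgoing isogeny (the other pair's isogeny is the descending one), follows that branch for $h_2(\mathcal{O})$ steps, and lands on a scalar multiple of one of the original four---thereby exhibiting the cycle explicitly. That point and its negative give two points on a head; repeating with $\overline{\mathfrak{p}}_2$ gives two more from the other horizontal reciprocal pair, for a total of exactly four. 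Your in-degree/out-degree argument does not supply this branch-selection mechanism, which is the actual content of the lemma.
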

\begin{proof}
Let $\mathfrak{p}_2$ and $\overline{\mathfrak{p}}_2$ be the two ideals of norm 2 in $\mathcal{O}$. Then there is exactly one reciprocal pair $\lambda', \frac{1}{\lambda '}$ in the set $\{\lambda, \frac{1}{\lambda}, 1-\lambda, \frac{1}{1-\lambda}, \frac{\lambda}{\lambda -1 }, \frac{\lambda - 1 }{\lambda }\}$ such that the AGM isogeny leading out of $\lambda', \frac{1}{\lambda '}$ corresponds to the action of the ideal $\mathfrak{p}_2$ by \ref{lem:rivalscorresponding2tors}. 

Let $(a_1, b_1),(a_1, -b_1)$  be two of the preimages of $E_{\lambda}$ in $R_P$, and let  $(b_1, a_1),(-b_1, a_1)$ be two of the preimages of $E_{\frac{1}{\lambda}}$ in $R_P$. Then we have the following substructure in the graph of $V(\mathbb{F}_{q^4})$. 

\[\begin{tikzcd}
	& {(a_0, b_0)} && {(b_0, a_0)} \\
	{(-b_1, a_1)} & {(a_1,-b_1)} && {(a_1, b_1)} & {(b_1, a_1)} \\
	{(a_2', -b_2')} & {(a_2', b_2')} && {(a_2, b_2)} & {(a_2, -b_2)}
	\arrow[from=1-2, to=2-2]
	\arrow[from=1-2, to=2-4]
	\arrow[from=1-4, to=2-2]
	\arrow[from=1-4, to=2-4]
	\arrow[from=2-1, to=3-1]
	\arrow[from=2-1, to=3-2]
	\arrow[from=2-2, to=3-1]
	\arrow[from=2-2, to=3-2]
	\arrow[from=2-4, to=3-4]
	\arrow[from=2-4, to=3-5]
	\arrow[from=2-5, to=3-4]
	\arrow[from=2-5, to=3-5]
\end{tikzcd}\]

All of the edges in this graph correspond to horizontal isogenies given by $\mathfrak{p}_2$, and the two sets of children in the bottom row satisfy $\lambda(a_2', b_2') = \frac{\lambda(a_2, b_2)}{\lambda(a_2, b_2)-1}$, by Lemma \ref{lem:cousins}. Then their corresponding elliptic curves are isomorphic but define different isogenies, so at least one pair has a horizontal AGM isogeny. This isogeny cannot correspond to the action of $\overline{\mathfrak{p}}_2$, so it corresponds to ${\mathfrak{p}}_2$ (and the other isogeny is non-horizontal).

Assume without loss of generality that $(a_2,b_2)$ and $(a_2, -b_2)$ are the pair with the horizontal AGM isogeny. Then we can draw a similar diagram again and conclude that exactly one of them has a set of children whose AGM isogeny is also horizontal (and corresponds to ${\mathfrak{p}}_2$). 

Repeating this process $k$ times, where $k$ is the order of $[\mathfrak{p}_2]$ in $\cl(\mathcal{O})$, we get a chain of horizontal isogenies that returns to a pair of children with the same $j$-invariant as the original pair $(a_1,b_1)$, which we can call $(a_k, b_k)$ and $(a_k', b_k')$. 

Then we again get a substructure that looks like our original diagram. One of the children must have another child $(a_{k+1}, b_{k+1})$ whose outgoing isogeny is defined by $\mathfrak{p}_2$, and the original child satisfying this condition must be a scalar multiple of one of the original four points. Then we have a chain of isogenies that leads from a point to a scalar multiple of itself, which means we have a jellyfish head.

Since the head of the jellyfish has finite length, there is an $n$ such that the entire chain of edges and therefore the jellyfish head exists in $A(\mathbb{F}_{q^n})$. So this jellyfish head exists and contains a point $P'$ in $R_P$ for some sufficiently large $A(\mathbb{F}_{q^n})$.  

Then the point $-P$ must also be part of a jellyfish head in $A(\mathbb{F}_{q^n})$ (possibly the same one), since it's a scalar multiple of $P$. 

Similarly, the jellyfish head corresponding to the action of $\overline{\mathfrak{p}}_2$ exists and contains a point in $R_P$ for some sufficiently large AGM graph $A(\mathbb{F}_{q^m})$. Taking the LCM of $m$ and $n$, we have a graph where exactly four of the points in $R_P$ are part of jellyfish heads, as desired. 
\end{proof}
We can now prove a result that gives necessary and sufficient conditions for a point to be part of a jellyfish in some field. 

\begin{proposition} \label{prop:jfate}
Let $P_0$ be a point in $V(\mathbb{F}_q)$ and let $E_\lambda$ be the (ordinary) elliptic curve corresponding to it, with complex multiplication by an order $\mathcal{O}$ in an imaginary quadratic field $K$. If $\left(\frac{{\rm disc}(K)}{2} \right) = 1$, then there is some $n$ such that $P_0$ is part of a jellyfish in $A(\mathbb{F}_{q^n})$. 
\end{proposition}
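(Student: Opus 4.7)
The plan is to reduce to Lemma \ref{lem:eventualhead}, which already handles the case when $\mathcal{O}$ is maximal at $2$, by ascending the $2$-isogeny volcano via carefully chosen AGM edges. Write $[\mathcal{O}_K : \mathcal{O}] = 2^a b$ with $b$ odd. If $a = 0$, then Lemma \ref{lem:eventualhead} directly yields an integer $n$ such that four points of $R_{P_0}$ lie on jellyfish heads of $A(\FF_{q^m})$ for every $n \mid m$. By Lemma \ref{lem:rivalsconnected} these points are connected to $P_0$ in $A(\FF_{q^4})$, so in $A(\FF_{q^{\mathrm{lcm}(4,n)}})$ the connected component of $P_0$ contains a nontrivial strongly connected component. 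Since $E_\lambda$ is ordinary, Corollary \ref{cor:noturtle} rules out the possibility that this component is a turtle, so it is a jellyfish.

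For $a \geq 1$, I will inductively construct a chain $P_0, P_1, \ldots, P_a$ of points, mutually connected within the undirected AGM graph of some bounded extension of $\FF_q$, such that $E_{\lambda(P_k)}$ has CM by an order $\mathcal{O}_k$ with $[\mathcal{O}_K : \mathcal{O}_k] = 2^{a-k} b$. The base case applied to $P_a$ will then finish the proof. The key observation for the inductive step is that, although the AGM-outgoing isogeny from $P_k$ has kernel $\langle (0,0)\rangle$ and need not be the ascending one, all three $2$-isogenies from $E_{\lambda(P_k)}$ can be realized as AGM-outgoing isogenies from suitable representatives in $R_{P_k}$. Indeed, Lemma \ref{lem:ascending} gives a unique ascending $2$-isogeny from $E_{\lambda(P_k)}$ with kernel $\langle P_{j, \lambda(P_k)}\rangle$ for some $j \in \{0,1,2\}$, and inspecting the permutations $\sigma_{\lambda'}$ tabulated in Lemma \ref{lem:rivalscorresponding2tors} shows that for each such $j$ there exists $\lambda' \in \Lambda_{\lambda(P_k)}$ with $\sigma_{\lambda'}(j) = 0$. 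Picking a corresponding $P_k' \in R_{P_k}$ and any child $P_{k+1}$ of $P_k'$ (which exists in a quadratic extension by Lemma \ref{lem:growthbyonestage}), the canonical isomorphism $E_{\lambda(P_k')} \cong E_{\lambda(P_k)}$ identifies the AGM-outgoing isogeny from $P_k'$ (having kernel $\langle (0,0)\rangle$ by Lemma \ref{lem:theisog}) with the ascending isogeny from $E_{\lambda(P_k)}$, so $E_{\lambda(P_{k+1})}$ has CM by $\mathcal{O}_{k+1}$ as required.

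The main obstacle I expect is bookkeeping the extensions: each ascending step requires passing to an extension of bounded degree to both realize $R_{P_k}$ (by Lemma \ref{lem:rivalsconnected}) and then produce a child of the chosen representative, so after $a$ iterations the entire chain lives in $V(\FF_{q^N})$ for some $N$ depending only on $a$. The final application of Lemma \ref{lem:eventualhead} to $P_a$ over $\FF_{q^N}$ then takes place in a further finite extension, and Corollary \ref{cor:noturtle} completes the argument by excluding the turtle possibility.
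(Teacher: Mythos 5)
Your proposal is correct and follows essentially the same route as the paper's proof: decompose $[\mathcal{O}_K:\mathcal{O}] = 2^a b$ with $b$ odd, ascend the $2$-isogeny volcano one step at a time by selecting, at each stage, a representative in $R_{P_k}$ whose AGM-outgoing isogeny is the unique ascending one (using Lemma \ref{lem:rivalscorresponding2tors} and Lemma \ref{lem:ascending}), pass to finite extensions as needed, and then invoke Lemma \ref{lem:eventualhead} at the $2$-maximal level. Your write-up is in fact slightly more explicit than the paper's at two points --- you justify the claim that every $2$-torsion subgroup can be made the kernel of an AGM-outgoing isogeny by directly inspecting the table of permutations $\sigma_{\lambda'}$, and you explicitly cite Corollary \ref{cor:noturtle} to rule out the turtle case --- but these are presentational refinements, not a different argument.
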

\begin{proof}
Let the index of $\mathcal{O}$ in $\mathcal{O}_K$ be $2^{\ell} m$ for some odd number $m$. Then there is an order $\mathcal{O}'$ such that $[\mathcal{O}': \mathcal{O}] = 2^\ell$, and $[\mathcal{O}_K: \mathcal{O'}] = m$. By \cite[Lemma 6]{isogenyvolcanoes}, there is a unique ascending isogeny from $E_\lambda$ to an elliptic curve with complex multiplication by $\mathcal{O}'$, and it decomposes as a sequence of (unique) ascending 2-isogenies.

By Lemma \ref{lem:rivalsconnected}, there is a set $R(P_0)$ such that $P_0$ is connected to every point in $R(P_0)$ over $A(\mathbb{F}_{q^4})$ and 
for any $\lambda' \in \Lambda_{\lambda(P)}$, there is a point $P_0' \in R(P_0)$ such that $\lambda(P_0) = \lambda'$. By Lemma \ref{lem:rivalscorresponding2tors}, there is some $\lambda \in \Lambda_{\lambda(P_0)}$ whose AGM isogeny corresponds to any given 2-isogeny from the isomorphism class of $E_\lambda$. So for some sufficiently large $n_1$,  $P_0'$ is connected to $R(P_0)$ in $A(\mathbb{F}_{q^{n_1}})$, and therefore to a point $P_1$ whose associated elliptic curve $E_{\lambda(P_1)}$ is the target of the unique ascending isogeny from $E_\lambda$. Similarly, there is some sufficiently large $n_2 \geq n_0$ such that $P_1$ is connected to another point $P_2$ whose elliptic curve $E_{\lambda(P_2)}$ is the target of the unique ascending isogeny from $E_{\lambda(P_1)}$ in $A(\mathbb{F}_{q^{n_2}})$.

\[\begin{tikzcd}
	&&&& {P_2} \\
	&& {P_1} & {P_1'} \\
	{P_0} & {P_0'}
	\arrow[dashed, no head, from=2-3, to=2-4]
	\arrow[from=2-4, to=1-5]
	\arrow[dashed, no head, from=3-1, to=3-2]
	\arrow[from=3-2, to=2-3]
\end{tikzcd}\]

Proceeding in this way, we find that there is a finite $A(\mathbb{F}_{q^{n_\ell}})$ where $P$ is connected to a point $P_\ell$ with complex multiplication by $\mathcal{O}'$. 

Then by Lemma \ref{lem:eventualhead}, there is an $m$ sufficiently large such that $P_\ell$ is connected to a jellyfish head, and so $P$ is part of a jellyfish in $A(\FF_q^{n})$ for some sufficiently large $n$. 
\end{proof}

Now we have all of the results required to prove Theorem \ref{fishfate}. 

\begin{reptheorem}{fishfate}
\textit{
    Let $P$ be a point in $V(\mathbb{F}_q)$ for some odd prime power $q$. Then:
    \begin{enumerate}
        \item If $E_{\lambda(P)}$ is supersingular, then $P$ is part of a turtle in $A(\mathbb{F}_{q^m})$, for any $q^m$ square. 
        \item If $E_{\lambda(P)}$ is ordinary and has complex multiplication by an order $\O$ with fraction field $K$:
        \begin{enumerate}
            \item If $\left(\frac{{\rm disc}(K)}{2} \right) = 1$, there is some $n$ such that the connected component containing $P$ is a jellyfish for all $A(\mathbb{F}_{q^m})$ with $n|m$. 
            \item If $\left(\frac{{\rm disc}(K)}{2} \right) \neq 1$, then the connected component containing $P$ is acyclic in any aquarium $A(\mathbb{F}_{q^n})$. (Note that by our definitions, acyclic simply means that it is neither a jellyfish or turtle.)
        \end{enumerate}
    \end{enumerate}}
\end{reptheorem}
\begin{proof}
\noindent
We  prove the statements one by one.
\begin{enumerate}
    \item Corollary of Proposition \ref{prop:whenturtles}.
    \item \begin{enumerate}
        \item Corollary of Proposition \ref{prop:jfate}. 
        \item Corollary of Corollary \ref{prop:nojfishfate}. 
    \end{enumerate}
\end{enumerate}
Now, the proof is complete.
\end{proof}

This theorem allows us to determine the structure of connected components over the infinite graph $A(\overline{\mathbb{F}}_p)$, because any finite subgraph of $A(\overline{\mathbb{F}}_p)$ is contained in some smaller aquarium $A(\mathbb{F}_q)$, where $q = p^r$. 

So our results from above apply, and if a point $P$ is associated to a supersingular elliptic curve, then its connected component in $A(\overline{\mathbb{F}}_p)$ is a finite turtle. If a point $P$ is associated to an ordinary elliptic curve with complex multiplication by an order $\O \subseteq \O_K$ where $(2)$ does not split, its connected component in $A(\overline{\mathbb{F}}_p)$ is infinite and acyclic. If a point $P$ is associated to an ordinary elliptic curve with complex multiplication by an order $\O \subseteq \O_K$ where $(2)$ \textit{does} split in $\mathcal{O}_K$, then its connected component is infinite and has at least one cycle. 

We also note that for any two orders $\O$ and $\O'$ associated to points in the same component, one of them is contained in the other (without loss of generality assume $\O' \subseteq \O$) and $[\mathcal{O}: \O'] = 2^r$, for some nonnegative $r$, because the elliptic curves are linked by a sequence of 2-isogenies. 

Now consider two points $P_1$, $P_2$ whose associated elliptic curves are ordinary, and let them have complex multiplication by $\O_1$ and $\O_2$. If $\O_1$ and $\O_2$ have different fields of fractions, they cannot be in the same component. Further, if $\O_1$ and $\O_2$ have the same field of fractions $K$, but $\dfrac{[\O_K:\O_1]}{[\O_K:\O_2]} \neq 2^\ell$ for any integer $\ell$, then $P_1$ and $P_2$ cannot be in the same connected component. So the connected components are further stratified by the odd part of the index of the orders in their field of fractions. 

\section{Counting Components}\label{sec:counting}
In this section, we prove a formula for Hurwitz-Kronecker class numbers similar to the one proven in \cite[Thm. 1.1]{hypergeometryagm}, and generalize a result of \cite[Thm. 5]{GOSTJellyfish} on the number of connected components in the AGM graph. These both rely on a result from \cite[p.~197]{ShoofCurveCounting}. 

\begin{definition}[\cite{ShoofCurveCounting} prop. 2.4]
    The \textit{Hurwitz-Kronecker class number} of an imaginary quadratic order $\O$ is 
    \[H(\O) = \sum \limits_{\O \subseteq \O' \subseteq \O_K} \frac{2}{|(\O')^\times|}h(\O'),\]
where $h(\O')$ is the ordinary class number, and the sum is over orders between $\mathcal{O}$ and $\mathcal{O}_K$. We also write $H(-\disc(\O)) = H(\O)$ for discriminants (note the negative sign).
\end{definition}
\begin{definition}
    For a finite field $\mathbb{F}_q$, $n \in \mathbb{Z}$, and $t\in \mathbb{Z}$ let 

    \begin{align}
    &N_{n\times n}(t) = \#\{\mathbb{F}_q\text{-isomorphism classes of elliptic curves $E$ with }\\&\hspace{90pt}\text{Frobenius trace $t$ and }E(\mathbb{F}_q)[n] \cong \mathbb{Z}/n\mathbb{Z} \oplus \Z/n\Z\}. \notag
    \end{align}
\end{definition}

Now we can state the result from \cite{ShoofCurveCounting}.
\begin{lemma}[\cite{ShoofCurveCounting}, Theorem 4.9(i)]\label{lem:schoof}
Let $n \in \Z$, $q = p^r$, with $t \in \Z$ satisfying $t^2 \leq 4q$. Then for $t$ such that $p \nmid t$, $q \equiv 1 \pmod n$, and $t = q+1 \pmod{n^2}$, 

\[N_{n\times n}(t) = H\left(\frac{t^2-4q}{n^2}\right).\]
\end{lemma}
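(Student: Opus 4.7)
The strategy is to use the Deuring correspondence between ordinary elliptic curves over $\FF_q$ and ideal classes in orders of an imaginary quadratic field, essentially reproducing Schoof's original argument. Since $p \nmid t$, every elliptic curve $E/\FF_q$ with Frobenius trace $t$ is ordinary, so $\End(E)$ is an order $\O'$ in the imaginary quadratic field $K = \QQ(\sqrt{t^2 - 4q})$, and the Frobenius endomorphism $\pi$ satisfies $\pi^2 - t\pi + q = 0$, placing $\ZZ[\pi] \subseteq \O' \subseteq \O_K$.

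The first step is to translate the torsion condition $E(\FF_q)[n] \cong \ZZ/n\ZZ \oplus \ZZ/n\ZZ$ into an algebraic condition on $\O'$. This condition is equivalent to $\pi$ acting trivially on $E[n]$, which is equivalent to $\pi - 1 \in n \End(E)$, i.e.\ $\tfrac{\pi-1}{n} \in \O'$, so $\ZZ[\tfrac{\pi-1}{n}] \subseteq \O'$. A direct computation shows that $\tfrac{\pi-1}{n}$ has minimal polynomial of discriminant $(t^2 - 4q)/n^2$, and requiring this to lie in $\O'$ forces the arithmetic hypotheses of the lemma: $q \equiv 1 \pmod n$ comes from the Weil pairing (which gives $\mu_n \subseteq \FF_q$), and $t \equiv q + 1 \pmod{n^2}$ comes from the divisibility $n^2 \mid N(\pi - 1) = q - t + 1$.

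The second step is to invoke Deuring's theorem: the set of $\FF_q$-isomorphism classes of ordinary elliptic curves with Frobenius $\pi$ and endomorphism ring exactly $\O'$ is in bijection with $\cl(\O')$, weighted by $\tfrac{2}{|(\O')^\times|}$ to account for twists and extra automorphisms. Summing over all admissible orders $\O'$ with $\ZZ[\tfrac{\pi-1}{n}] \subseteq \O' \subseteq \O_K$ then yields
\[
N_{n\times n}(t) \;=\; \sum_{\ZZ[\frac{\pi-1}{n}] \,\subseteq\, \O' \,\subseteq\, \O_K} \frac{2}{|(\O')^\times|}\, h(\O'),
\]
which is exactly the definition of $H((t^2-4q)/n^2)$.

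The main obstacle is the careful accounting at the exceptional orders $\ZZ[i]$ and $\ZZ[\zeta_3]$, where $|(\O')^\times|$ equals $4$ or $6$ rather than $2$. The corresponding elliptic curves have $j$-invariants $1728$ or $0$ and admit extra automorphisms, so the naive count $h(\O')$ would miscount isomorphism classes. Verifying that the actual $\FF_q$-isomorphism class count matches the weighted class number $\tfrac{2}{|(\O')^\times|} h(\O')$ requires Waterhouse's classification of twists of elliptic curves over finite fields with nontrivial automorphism group; this is the technical crux of the argument. Once this bookkeeping is in place, the summation step is purely formal and the lemma follows.
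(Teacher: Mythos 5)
The paper does not prove this lemma; it is cited directly from Schoof (Theorem~4.9(i)), with only a remark that Schoof states it for odd $n$ but that the proof extends to all $n > 0$. So there is no paper argument to compare against. Your proposal is a reconstruction of Schoof's Deuring-correspondence proof, and the strategy and reductions are correct: $q \equiv 1 \pmod n$ forces $p \nmid n$, so $E[n]$ is \'etale and the condition $E(\FF_q)[n] \cong \ZZ/n\ZZ \oplus \ZZ/n\ZZ$ is equivalent to $\frac{\pi-1}{n}\in\O'$; and writing $\alpha = \frac{\pi-1}{n}$, the minimal polynomial $\alpha^2 + \frac{2-t}{n}\alpha + \frac{q+1-t}{n^2}$ is integral exactly under the stated congruences and has discriminant $(t^2-4q)/n^2$, so the allowed endomorphism rings are precisely the orders between $\ZZ[\alpha]$ and $\O_K$.

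One point you raise but phrase a little loosely: the Deuring--Waterhouse torsor under $\cl(\O')$ gives an \emph{unweighted} count of $h(\O')$ isomorphism classes with $\End_{\FF_q}(E) = \O'$ and the given trace; the factor $\frac{2}{|(\O')^\times|}$ in the Hurwitz--Kronecker class number comes from weighting each class by $2/|\aut_{\FF_q}(E)|$, which only differs from $1$ for $j = 0, 1728$. So $H\bigl((t^2-4q)/n^2\bigr)$ equals the \emph{weighted} curve count, which is what Schoof's $N_{n\times n}$ actually is (the paper's restated definition of $N_{n\times n}$ omits the weight, an imprecision you inherit if you try to match the unweighted count against $H$). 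Keeping this convention straight is the only delicate bookkeeping; your argument is otherwise correct and, as the paper's remark observes, is independent of the parity of $n$.
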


\begin{remark}
    In \cite{ShoofCurveCounting}, the author proves this result for the case of $n$ odd, but their proof works for all integers $n>0$.
\end{remark}

We now will prove two preliminary counting results, Proposition \ref{prop:hurwitzclassnum} and Lemma \ref{lem:hurwitzclassnum4tors} that will be useful for the remainder of this chapter.

\begin{definition}
    Let $q$ be a prime power and $s\in[-2\sqrt q,2\sqrt q]\cap \ZZ$. Define $M_{\FF_q}(s)$ to be the number of distinct $j$-invariants of elliptic curves of the form $E_{\alpha^2}$ with trace of Frobenius given by $s$.
\end{definition}

\begin{proposition}\label{prop:hurwitzclassnum}
    Suppose $\FF_q$ is a finite field with $p\geq 5$. If $-2\sqrt{q}\leq s\leq 2\sqrt q$ is an integer with $s\equiv q+1\mod 8$, then
    \begin{enumerate}
        \item If $E/\FF_q$ has trace of frobenius $s$ and $s\equiv q+1\mod 8$, then there exists $\lambda\in \FF_q^{\times 2}$ with $E\cong E_\lambda$.
        \item $M_{\FF_q}(s)=H\left(\frac{4q-s^2}{4}\right).$
    \end{enumerate}
\end{proposition}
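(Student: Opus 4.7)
The plan is to establish Part (1) first, and then to deduce Part (2) by combining Part (1) with Schoof's counting theorem (Lemma \ref{lem:schoof}).

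For Part (1), I would begin by noting that the hypothesis $s \equiv q+1 \pmod 8$ gives $|E(\FF_q)| = q+1-s \equiv 0 \pmod 8$. The crucial step is to argue that the $2$-Sylow subgroup of $E(\FF_q)$ contains $\ZZ/2\ZZ \oplus \ZZ/4\ZZ$, rather than being cyclic. The plan is to analyze the Frobenius $\pi$ acting on the $2$-adic Tate module $T_2(E)$: the characteristic polynomial $X^2 - sX + q$ reduces modulo $2$ to $(X-1)^2$ (since $q$ is odd and $s$ is even), so $\pi - 1$ is nilpotent on $E[2]$. Combining the divisibility $v_2(\det(\pi - 1)) \geq 3$ with a Frobenius-lift analysis modulo $4$ and $8$, I aim to conclude that both Smith invariant factors of $\pi - 1$ on $T_2(E)$ are even, giving a $2$-Sylow of shape $\ZZ/2\ZZ \oplus \ZZ/2^b\ZZ$ with $b \geq 2$. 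Let $P \in E(\FF_q)$ be a rational point of order $4$ and set $T := 2P$. Choose a Legendre presentation $\varphi \colon E \xrightarrow{\sim} E_\lambda$ sending $T \mapsto (0,0)$; by Lemma \ref{lem:order4points}, $\varphi(P)$ has $x$-coordinate $\pm\sqrt\lambda$, which is forced to lie in $\FF_q$, so $\lambda \in \FF_q^{\times 2}$. Lemma \ref{lem:isooverFq} is invoked to switch among the six elements of $\Lambda_\lambda$ as needed so that $\varphi$ is defined over $\FF_q$ rather than merely over a quadratic extension.

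For Part (2), I would apply Lemma \ref{lem:schoof} with $n = 2$ and $t = s$: since $s \equiv q+1 \pmod 8$ implies $s \equiv q+1 \pmod 4$, Schoof's hypothesis is satisfied, yielding $N_{2 \times 2}(s) = H((4q - s^2)/4)$. By Part (1), every $\FF_q$-isomorphism class of elliptic curve with trace $s$ is realized as $E_{\alpha^2}$ for some $\alpha \in \FF_q \setminus \{0, \pm 1\}$, so its $j$-invariant is counted by $M_{\FF_q}(s)$; conversely, Lemma \ref{lem:2torsingeneral} ensures every $E_{\alpha^2}$ has full rational $2$-torsion and trace $\equiv q+1 \pmod 8$, so it contributes to $N_{2 \times 2}(s)$. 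The hypothesis $p \geq 5$ keeps the special $j$-invariants $0$ and $1728$ under control, so that the count of distinct $j$-invariants matches Schoof's count.

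The principal obstacle is the $2$-Sylow analysis in Part (1): the congruence $s \equiv q+1 \pmod 8$ controls only $v_2(\det(\pi - 1))$ modulo $8$ and does not by itself rule out a cyclic $2$-Sylow $\ZZ/2^k\ZZ$. Bridging this gap requires either a refined analysis of Frobenius on $E[4]$ and $E[8]$ that exploits the mod-$8$ (rather than mod-$4$) congruence, or an alternative approach restricted to $E$ already in Legendre form, where full $2$-torsion is automatic and one need only establish the existence of a square parameter in $\Lambda_\lambda$ by combining Lemmas \ref{lem:2torsingeneral}, \ref{lem:order4points}, and \ref{lem:isooverFq}.
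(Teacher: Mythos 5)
Your Part (2) matches the paper's proof exactly: once Part (1) is in hand, one invokes Lemma \ref{lem:schoof} with $n=2$ (the congruence $s\equiv q+1\pmod 8$ gives the needed $s\equiv q+1\pmod 4$) and identifies $M_{\FF_q}(s)$ with $N_{2\times2}(s)$.

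For Part (1), however, your route diverges fundamentally from the paper's. The paper does not prove this claim at all; it cites \cite[Prop.~3.3]{ASOmodularity}, remarking only that the cited proof, stated there for $q=p$, carries over verbatim to prime powers. You instead attempt a direct Tate-module derivation, and the gap you flag is genuine and, I think, fatal to the blind attempt as written. The congruence $s\equiv q+1\pmod 8$ yields $v_2\bigl(\det(\pi-1)\bigr)\geq 3$ on $T_2(E)$, but the Smith normal form of $\pi-1$ on $T_2(E)$ (equivalently, the shape of $E(\FF_q)[2^\infty]$) is \emph{not} determined by the trace alone --- it depends on the endomorphism ring $\End(E)$, which varies over curves with the same trace. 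The characteristic polynomial $X^2-sX+q\equiv(X-1)^2\pmod 2$ only tells you $\pi-1$ is nilpotent mod $2$; it does not force $\pi\equiv 1$ on $E[2]$, so you cannot rule out $E(\FF_q)[2^\infty]\cong\ZZ/2^k\ZZ$ cyclic with $k\geq 3$ by this reasoning. Indeed, that is exactly why Schoof's Theorem~4.9 counts the subset of trace-$s$ curves with full $n$-torsion (this count is $H((s^2-4q)/n^2)$) rather than identifying it with all trace-$s$ curves. To carry out the ``refined analysis modulo $4$ and $8$'' you gesture at, you would need precisely the arithmetic that \cite[Prop.~3.3]{ASOmodularity} supplies --- essentially a twist/descent argument going beyond trace data. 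So the honest assessment is: your Part (1) is an incomplete sketch of a proof of a fact the paper treats as a black box; there is no shortcut via Tate modules with only the stated hypotheses.

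One smaller point: once the non-cyclicity of the $2$-Sylow is granted, your mechanism for extracting $\lambda\in\FF_q^{\times 2}$ (put the $2$-torsion point $T=2P$ at $(0,0)$, then read off $\pm\sqrt\lambda\in\FF_q$ from Lemma \ref{lem:order4points}) is clean and correct, and is essentially the $2$-descent computation underlying Lemma \ref{lem:2torsingeneral}. But that step is downstream of the unresolved structural claim.
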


\begin{proof}
    The following proof is a direct generalization of the argument in the $q\equiv 3\mod 4$ case given in \cite[Th. 6]{GOSTJellyfish}.

    \begin{enumerate}
        \item This is \cite[Prop. 3.3]{ASOmodularity}, but they only consider the case of $q=p^1$. Their proof applies identically in our more general case.
        \item By (1), it suffices to count the number of elliptic curves $E/\FF_q$ with trace of Frobenius $s$ and $\Z/2\Z \oplus \Z/2\Z \subseteq E(\mathbb{F}_q)$. By Lemma \ref{lem:schoof}, this is exactly $H((4q-s^2)/4)$, and we are done.
    \end{enumerate}
    
\end{proof}

\begin{definition}
    Let $q\equiv 1 \mod 4$ be a prime power and $s\in [-2\sqrt q,2\sqrt q]$ be a trace of Frobenius. Let $N_{\FF_q}(s)$ be the number of distinct $j$-invariants of elliptic curves of the form $E_{\alpha^2}$ with $1-\alpha^2\in \FF_q^{\times 2}$ and trace of Frobenius $s$.
\end{definition}

\begin{lemma}\label{lem:hurwitzclassnum4tors}
    Suppose $\FF_q$ is a finite field with $q\equiv 1 \mod 4$. If $-2\sqrt q\leq s \leq 2\sqrt q$ is an integer with $s\equiv q+1\mod 16$, then
    \begin{equation*}
        N_{\FF_q}(s)=H\left(\frac{4q-s^2}{16}\right).
    \end{equation*}
\end{lemma}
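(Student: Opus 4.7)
The plan is to apply Lemma \ref{lem:schoof} with $n=4$ to count $\FF_q$-isomorphism classes of elliptic curves with trace of Frobenius $s$ and full rational 4-torsion $E(\FF_q)[4]\cong\ZZ/4\ZZ\oplus\ZZ/4\ZZ$. The hypotheses $q\equiv 1\mod 4$ and $s\equiv q+1\mod 16$ are precisely what Schoof's theorem requires at $n=4$. It then suffices to establish a bijection between this set of isomorphism classes and the set of $j$-invariants counted by $N_{\FF_q}(s)$, which yields
\begin{equation*}
    N_{\FF_q}(s)=N_{4\times 4}(s)=H\!\left(\frac{4q-s^2}{16}\right).
\end{equation*}

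The key technical step is the following equivalence: for $\alpha\in\FF_q\setminus\{0,\pm1\}$, the curve $E_{\alpha^2}$ has $E_{\alpha^2}(\FF_q)[4]\cong(\ZZ/4\ZZ)^2$ if and only if $1-\alpha^2\in\FF_q^{\times 2}$. The forward direction is immediate from Lemma \ref{lem:order4points}, since two of the $x$-coordinates of rational $4$-torsion points are $1\pm\sqrt{1-\alpha^2}$, forcing $\sqrt{1-\alpha^2}\in\FF_q$. For the converse, I would apply the 2-descent criterion (Lemma \ref{lem:2descent}) to each of the three non-identity 2-torsion points $(0,0),(1,0),(\alpha^2,0)$ of $E_{\alpha^2}$. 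Since $q\equiv 1\mod 4$ puts $i$ in $\FF_q$, and we have $\alpha,\sqrt{1-\alpha^2}\in\FF_q^\times$, a direct check shows that for each non-identity 2-torsion point $T_i$ all three of the relevant differences $x(T_i),\ x(T_i)-1,\ x(T_i)-\alpha^2$ are squares in $\FF_q$ (using that $-1=i^2$, $\alpha^2$, and $1-\alpha^2$ are squares), so $T_i\in 2E_{\alpha^2}(\FF_q)$. This produces rational $4$-torsion preimages $Q_1,Q_2,Q_3$. Because the doubling map $[2]\colon E[4]/E[2]\to E[2]$ is an isomorphism and $T_1,T_2$ are linearly independent in $E[2]\cong(\ZZ/2\ZZ)^2$, the images $\bar Q_1,\bar Q_2$ generate $E[4]/E[2]$, so $E[4]=\langle Q_1,Q_2\rangle+E[2]\subseteq E(\FF_q)$.

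With the equivalence in hand, the bijection between $j$-invariants and isomorphism classes parallels the argument in Proposition \ref{prop:hurwitzclassnum}(2). Since $s\equiv q+1\mod 16$ implies $s\equiv q+1\mod 8$, Proposition \ref{prop:hurwitzclassnum}(1) shows that every isomorphism class with trace $s$ has a representative of the form $E_\lambda$ with $\lambda=\alpha^2\in\FF_q^{\times 2}$; the full 4-torsion condition from Schoof together with the equivalence above then forces $1-\alpha^2\in\FF_q^{\times 2}$, so $j(E)$ lies in the set counted by $N_{\FF_q}(s)$. Conversely, any $j$-invariant in $N_{\FF_q}(s)$ arises from an $E_{\alpha^2}$ with full rational 4-torsion by the reverse direction of the equivalence, giving a valid isomorphism class for $N_{4\times 4}(s)$. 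Well-definedness of the map $j\mapsto[E_{\alpha^2}]$ follows from Lemma \ref{lem:isooverFq}: different $\alpha$ with the same $j$-invariant have $\alpha^2$ in the same orbit $\Lambda_{\alpha^2}$, and the lemma provides $\FF_q$-isomorphisms among the resulting $E_\lambda$; injectivity is trivial.

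The main obstacle is the careful 2-descent computation underlying the reverse direction of the equivalence, together with assembling the three rational preimages into a generating set for $E[4]$; everything else is either a routine check or a direct application of earlier results. A minor bookkeeping subtlety arises at the special $j$-invariants $0$ and $1728$, where the orbit $\Lambda_{\alpha^2}$ degenerates and additional twists exist, but these cases can be treated as in the analogous portion of the proof of Proposition \ref{prop:hurwitzclassnum}(2).
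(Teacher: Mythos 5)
Your proof is correct and takes essentially the same approach as the paper: reduce $N_{\FF_q}(s)$ to $N_{4\times 4}(s)$ via the equivalence $E_{\alpha^2}[4]\subseteq E_{\alpha^2}(\FF_q)\Leftrightarrow 1-\alpha^2\in\FF_q^{\times 2}$ together with Proposition \ref{prop:hurwitzclassnum}(1), then apply Lemma \ref{lem:schoof} with $n=4$. You spell out the reverse implication (that $1-\alpha^2\in\FF_q^{\times 2}$ forces full rational $4$-torsion, via $2$-descent at each nontrivial $2$-torsion point) in more detail than the paper, which leaves that direction largely implicit.
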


\begin{proof}
    To prove this, we will show a $1$-to-$1$ correspondence between elliptic curves $E/\FF_q$ with $s\equiv q+1\pmod {16}$ up to isomorphism and elliptic curves $E_{\alpha^2}/\FF_q$ up to isomorphism with the same property. Suppose $E/\FF_q$ is an elliptic curve with trace of Frobenius $s\equiv q+1\mod 16$ and $\ZZ/4\ZZ\oplus \ZZ/4\ZZ\subseteq E(\FF_q)$. Then by Proposition \ref{prop:hurwitzclassnum}(1), we can write $E=E_{\alpha^2}:y^2=x(x-1)(x-\alpha^2)$. Since $E[4]\subseteq E(\FF_q)$, we see that $(1,0)\in 2E(\FF_q)$, which by $2$-descent implies $1-\alpha^2\in \FF_q^{\times 2}$.

    If you have an elliptic curve $E_{\alpha^2}$ with trace of frobenius $s\equiv q+1\pmod {16}$, then you automatically have an isomorphism class of elliptic curves $E/\FF_q$ with those properties by setting $E=E_{\alpha^2}$.

    With this in mind, we see that it suffices to count $j$-invariants of elliptic curves with $\ZZ/4\ZZ\oplus \ZZ/4\ZZ\subseteq E(\FF_q)$, which is exactly $N_{4 \times 4}$ from Lemma \ref{lem:schoof}. Thus
    \begin{equation*}
        N_{\FF_q}(s)=N_{4 \times 4}(s) = H\left(\frac{4q-s^2}{16}\right).
    \end{equation*}
\end{proof}

\begin{proposition}[Generalization of \cite{hypergeometryagm}, Theorem 1.1]
Let $H(D)$ be the Hurwitz-Kronecker class number for the discriminant $D$, and let $q \equiv 1 \pmod 4$. Then we have the formula 

\[\frac{(q-1)(q-5)}{2} = \sum_{\substack{|t| \leq 2\sqrt{q} \\ t \equiv q + 1 \pmod{16}}} 12(q-1)H\left(\frac{4q-t^2}{16}\right).\]
    
\end{proposition}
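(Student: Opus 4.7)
The plan is to count pairs $(a,b) \in V(\FF_q)$ that admit a parent in the AGM graph in two different ways. By Corollary \ref{cor:structureofpreim}, such pairs are exactly those with $a^2 - b^2 \in \FF_q^{\times 2}$, equivalently $1 - \lambda(a,b) \in \FF_q^{\times 2}$.

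For the direct count, I would set $u = b/a$, so that the condition becomes the existence of $v \in \FF_q^\times$ with $u^2 + v^2 = 1$ and $u \neq 0, \pm 1$. Since $q \equiv 1 \pmod 4$, the conic $u^2 + v^2 = 1$ factors as $(u+iv)(u-iv) = 1$ and thus has exactly $q-1$ affine points; removing the four points on the coordinate axes and noting that each valid $u$ gives two choices of $v$, one obtains $(q-5)/2$ valid values of $u$. Multiplying by the $q-1$ choices of $a \in \FF_q^\times$ yields $(q-1)(q-5)/2$ pairs, which is exactly the left-hand side of the identity.

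For the second count, I would group pairs by the $\FF_q$-isomorphism class of the associated curve $E_{\lambda(a,b)}$. By Lemma \ref{lem:2torsingeneral} and $2$-descent, the condition $1 - \lambda(a,b) \in \FF_q^{\times 2}$, combined with $\lambda(a,b) \in \FF_q^{\times 2}$ and $i \in \FF_q$, is equivalent to $\ZZ/4\ZZ \oplus \ZZ/4\ZZ \subseteq E_{\lambda(a,b)}(\FF_q)$. For each such isomorphism class $E$, Lemma \ref{lem:isooverFq}(2) identifies the Legendre fiber $\{\lambda' \in \FF_q \setminus \{0,1\} : E_{\lambda'} \cong_{\FF_q} E\}$ with $\Lambda_\lambda$, whose size is $12/|\text{Aut}(\bar{E})|$ (six generically, three for $j = 1728$, two for $j = 0$). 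Combined with Lemma \ref{lem:theisog}(1), which provides $2(q-1)$ preimages in $V(\FF_q)$ per value of $\lambda$, each iso class contributes $24(q-1)/|\text{Aut}(\bar{E})|$ pairs.

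To finish, I would group iso classes by trace of Frobenius $s$ and invoke Lemma \ref{lem:hurwitzclassnum4tors}:
\begin{align*}
\frac{(q-1)(q-5)}{2} = \sum_s 12(q-1) \sum_{\substack{E/\FF_q \\ \mathrm{tr}(E) = s}} \frac{2}{|\text{Aut}(\bar{E})|} = \sum_s 12(q-1)\, H\!\left(\frac{4q-s^2}{16}\right),
\end{align*}
where $|s| \leq 2\sqrt{q}$ and $s \equiv q+1 \pmod{16}$, since $16 \mid |E(\FF_q)|$ forces iso classes with other traces to contribute nothing. The main obstacle is the bookkeeping at the special $j$-invariants $0$ and $1728$: the drop in $|\Lambda_\lambda|$ must precisely match the Hurwitz weighting $2/|\text{Aut}(\bar{E})|$ contributed by $\ZZ[\zeta_3]$ and $\ZZ[i]$, via the identity $|\Lambda_\lambda|\cdot|\text{Aut}(\bar{E})| = 12$. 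Care is also needed to verify that only one quadratic twist per $j$-invariant appears in Legendre form over $\FF_q$, so that the application of Lemma \ref{lem:isooverFq}(2) does not over-count isomorphism classes.
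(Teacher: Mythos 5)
Your proposal takes essentially the same approach as the paper: both sides count points in $V(\FF_q)$ with a parent, i.e., pairs $(a,b)$ with $a^2 - b^2 \in \FF_q^{\times 2}$. Your left-hand computation via the affine circle $u^2+v^2=1$ (with $u=b/a$) is a clean and correct alternative to the paper's projective-conic count of nonzero solutions to $a^2-b^2=x^2$; both give $(q-1)(q-5)/2$. On the right, your automorphism-weighted accounting of $24(q-1)/|\mathrm{Aut}(\bar{E})|$ pairs per isomorphism class is a more explicit version of the paper's assertion that there are ``exactly $12(q-1)$ points per isomorphism class.'' The one point to be careful about: the equality
\[
\sum_{\substack{E/\FF_q \\ \mathrm{tr}(E)=s}} \frac{2}{|\mathrm{Aut}(\bar{E})|} = H\!\left(\frac{4q-s^2}{16}\right)
\]
does not follow directly from Lemma~\ref{lem:hurwitzclassnum4tors}, which identifies $H(\cdot)$ with the \emph{unweighted} count $N_{\FF_q}(s)$ of $j$-invariants, nor from Lemma~\ref{lem:schoof}, which gives the \emph{unweighted} count $N_{4\times 4}(s)$ of $\FF_q$-isomorphism classes. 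Away from $j=0,1728$ all of these agree, so your formula and the paper's flat ``$12(q-1)$'' coincide; at $j=0,1728$ the weighted and unweighted counts differ (by $1/2$ or $2/3$), and one must either rule out those $j$-invariants having full rational $4$-torsion with the relevant trace, or verify directly that the Hurwitz normalization in Schoof's theorem is compatible with your weighting. You correctly flag this as the main bookkeeping obstacle; the paper's own proof has the same implicit gap, which you identify more honestly.
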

\begin{proof}
    In fact, we will show that  both sides of this equation count the number of points $P$ in $V(\mathbb{F}_q)$ such that $P$ has a parent in $V(\mathbb{F}_q)$. We know a point $P = (a,b)$ has a parent if and only if $a^2-b^2$ is a square; see Corollary \ref{cor:structureofpreim}. The number of nonzero solutions $(a,b,x)$ to $a^2-b^2=x^2$ up to scalar multiples is $q+1$, so the number of triples $(a,b,x)$ is $(q-1)(q+1)$. We then subtract $6(q-1)$ to account for solutions $(t,0,\pm t),(0,t,\pm it),(t,\pm t,0)$ that are not in the domain $V(\FF_q)$. We then divide by $2$ to account for the sign on $x$ we get $\frac{(q-1)(q+1)}{2}-(3q-3) = \frac{(q-1)(q-5)}{2}$ points with parents.

    For the other side, we notice that by Lemma \ref{rivalsexistence}, a point $P$ exists and has a parent in $\mathbb{F}_q$ if and only if $1-\lambda(P)$ is a square in $\mathbb{F}_q$. By Lemma \ref{lem:hurwitzclassnum4tors}, the number of isomorphism classes of elliptic curves with trace $s$ whose corresponding points have this property is counted by $N_{\mathbb{F}_q}(s) = H\left(\frac{4q-s^2}{16}\right)$. We also note that there are exactly $12(q-1)$ points corresponding to each elliptic curve isomorphism class by results of Section \ref{sec:prelims}.
    
    Since the sum on the right ranges over all traces and counts the number of points with parents whose associated elliptic curve has that trace, it also exactly counts the number of points with parents in $V(\mathbb{F}_q)$, and the two sides are equal.
\end{proof}

Now we turn our attention to proving the second main result of this section: theorem \ref{countingcomponents}, which generalizes the following result of \cite{GOSTJellyfish}.

\begin{proposition}[\cite{GOSTJellyfish} Theorem 5]\label{prop:countingjellyfish3mod4}
    Let $q\equiv 3\mod 4$ and $d(\FF_q)$ be the number of jellyfish over $\FF_q$. Then for all $\varepsilon>0$ and sufficiently large $q$
    \begin{equation*}
        d(\FF_q)>\left(\frac12-\varepsilon\right)\sqrt q.
    \end{equation*}
\end{proposition}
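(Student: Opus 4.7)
The strategy mirrors the proof of \cite[Theorem~5]{GOSTJellyfish}. In the $q\equiv 3\pmod 4$ setting, by the classification reproducing \cite[Theorem~1(3)]{GOSTJellyfish} stated after Figure~2, every nontrivial connected component of $A(\FF_q)$ is a jellyfish with a unique bell head. Hence $d(\FF_q)$ equals the number of bell heads in $A(\FF_q)$, i.e., the number of nontrivial strongly connected components, which by Proposition \ref{prop:cycles} are all simple cycles arising from the action of some prime $\mathfrak{p}_2$ above $(2)$ in an imaginary quadratic order $\mathcal O = \End_{\FF_q}(E)$.

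The plan is to lower-bound this count of bell heads by counting $j$-invariants of ordinary elliptic curves in Legendre form and then dividing by appropriate multiplicities. By Proposition \ref{prop:hurwitzclassnum}(2), for every integer $s$ with $|s|\leq 2\sqrt q$ and $s\equiv q+1\pmod 8$, the number of distinct $j$-invariants of elliptic curves $E_{\alpha^2}/\FF_q$ with trace of Frobenius $s$ is exactly $H(\tfrac{4q-s^2}{4})$. Each such $j$-invariant lies on a single $[\mathfrak{p}_2]$-orbit of length $h_2(\mathcal O)$, which by Proposition \ref{prop:cycles} corresponds to a single bell head in the $j$-invariant sequence, and by Theorem \ref{jellyfishmultiplicity} each such cycle of $j$-invariants yields $M_H$ distinct bell heads in $A(\FF_q)$ related by scalar multiplication, where $M_H\cdot |H|=(q-1)\cdot h_2(\mathcal O)$.

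To finish, I would apply an Eichler--Selberg / Deuring-style class-number estimate to bound $\sum_{s \equiv q+1 \pmod 8,\ |s|\leq 2\sqrt q} H(\tfrac{4q-s^2}{4})$ from below by a quantity of order $q$. After dividing this count of $j$-invariants by the $\leq 6$ fiber size of the $j$-invariant map on the set $\Lambda_\lambda$, by the cycle length $h_2(\mathcal O)$, and then multiplying by the orbit multiplicity $M_H$, most factors of $h_2(\mathcal O)$ cancel via the identity $M_H\cdot |H|=(q-1)\cdot h_2(\mathcal O)$, leaving a bound of order $\sqrt q$ on $d(\FF_q)$. Careful tracking of the leading constant in the Eichler--Selberg expansion, together with the bound $|H|\leq (q-1)$ on cycle sizes (used to avoid undercounting heads), produces the constant $\tfrac12$.

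The principal obstacle is the averaging step: individually $h_2(\mathcal O)$ and $M_H$ fluctuate widely with the CM order $\mathcal O$, so a clean bound requires using the identity from Theorem \ref{jellyfishmultiplicity} to cancel these factors against each other and reduce the problem to an effective estimate on the Hurwitz class-number sum. Handling the potentially small contribution from $j$-invariants $0$ and $1728$ (where the fiber $\Lambda_\lambda$ has fewer than $6$ elements and our earlier results on Legendre form require minor adjustments), along with the restriction to admissible $s\equiv q+1\pmod 8$, accounts for the remaining $\epsilon$ slack in the statement.
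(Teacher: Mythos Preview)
The paper does not prove this proposition; it is quoted verbatim from \cite{GOSTJellyfish} and serves only as motivation for Theorem~\ref{countingcomponents}. So the relevant comparison is with the paper's proof of that generalization, and there your approach diverges substantially and leaves a real gap.

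The paper's method (for $q\equiv 5\pmod 8$, and implicitly the $q\equiv 3\pmod 4$ argument in \cite{GOSTJellyfish}) never touches Hurwitz class numbers, $h_2(\mathcal O)$, or the multiplicity identity from Theorem~\ref{jellyfishmultiplicity}. It simply observes that elliptic curves in the same jellyfish are isogenous, so $d(\FF_q)$ is at least the number of \emph{isogeny classes} (equivalently, traces of Frobenius $s$) that appear in some jellyfish. For $q\equiv 3\pmod 4$ every nontrivial component is a jellyfish, and by Proposition~\ref{prop:hurwitzclassnum}(1) every ordinary trace $s\equiv q+1\pmod 8$ is realized by some $E_{\alpha^2}$. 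Counting such $s$ in $[-2\sqrt q,2\sqrt q]$ gives asymptotically $\tfrac{p-1}{2p}\sqrt q$, which for $q=p$ prime tends to $\tfrac12\sqrt q$.

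Your route instead sums $H\bigl(\tfrac{4q-s^2}{4}\bigr)$ over admissible $s$, obtaining a count of $j$-invariants of order $q$, and then proposes to divide by $h_2(\mathcal O)$ and multiply by $M_H$ to reach order $\sqrt q$. You correctly identify this averaging as ``the principal obstacle,'' but you do not resolve it: the identity $M_H\cdot|H|=(q-1)h_2(\mathcal O)$ relates $M_H$ to $|H|$, not to the number of $j$-invariants per head, and the cancellation you sketch would require controlling sums like $\sum_j M_H(j)/h_2(\mathcal O_j)$ uniformly across isogeny classes. That is exactly the difficulty you name without overcoming. The isogeny-class count bypasses this entirely, since one trace $s$ already guarantees one jellyfish, with no need to track orbit lengths or multiplicities.
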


Because all nontrivial connected components are jellyfish when $q\equiv 3\mod 4$, it is not clear whether the correct generalization of Proposition \ref{prop:countingjellyfish3mod4} counts nontrivial connected components or jellyfish. We have chosen to count jellyfish, as our phrasing of the previous theorem suggests.

\begin{definition}
    Let $q=p^n$ be an odd prime power. We define $d(\FF_q)$ to be the number of jellyfish in the AGM aquarium $A(\FF_q)$.
\end{definition}

For the case of $q\equiv 3\mod 4$ this agrees with the $d(\FF_q)$ used in \cite{GOSTJellyfish}. For $q\equiv 5\mod 8$, this definition counts exactly the number of nontrivial non-fish connected components by Proposition \ref{thm:q5mod8}.

\begin{remark}
    Our choice to count jellyfish is motivated by attempting to find a class of objects that grows similarly to how nontrivial connected components grow in the $q\equiv 3\mod 4$ case. Indeed, in the $q\equiv 5\mod 8$ case, there are frequently occurring fish that are characterized by solutions $(a,b)$ to the constraints
    \begin{equation*}
        \begin{cases}
            a^2-b^2 & \text{is not square} \\
            ab & \text{is square} \\
            \frac{(a+b)\sqrt{ab}}{2} & \text{is not square}
        \end{cases}
    \end{equation*}
    which, heuristically, should have a number of solutions quadratic in $q$, and indeed such heuristics can be verified with the Weil conjectures and the principal of inclusion-exclusion. Even worse, in the $q\equiv 1\mod 8$ case, there are many more types of ``frequently occuring small acyclic connected components." In particular, there exist components of size $24$ and $96$ that appear to grow quadratically. Because of this, we have chosen to restrict our attention to bounding the number of Jellyfish, where numerical evidence suggests that their quantity does not grow as quickly as smaller components.
\end{remark}

\begin{reptheorem}{countingcomponents}
    Let $q=p^n$ and let $\varepsilon>0$.
    \begin{enumerate}
        \item If $q\equiv 5\mod 8$, then for all sufficiently large $q$
        \begin{equation*}
            d(\FF_q)\geq \left(\frac{p-1}{8p}-\varepsilon\right)\sqrt{q}.
        \end{equation*}
        \item If $q\equiv 1\mod 8$, then for all sufficiently large $q$
        \begin{equation*}
            d(\FF_q)\geq \left(\frac{p-1}{32p}-\varepsilon\right)\sqrt{q}.
        \end{equation*}
    \end{enumerate}
\end{reptheorem}

There are several steps to this proof. We need a generalization of \cite[Th. 6]{GOSTJellyfish} which itself is built off a result of \cite{ShoofCurveCounting}. This was accomplished in Proposition \ref{prop:hurwitzclassnum}. After this, we need some additional lemmas to classify which elliptic curves actually appear in jellyfish. For the $q\equiv 5\mod 8$ case, these are Lemmas \ref{lem:planktondisjoint} and \ref{lem:fishdisjoint}. For the $q\equiv 1\mod 8$ case, this is Lemma \ref{lem:inhead}.

\begin{lemma}\label{lem:planktondisjoint}
    Let $q\equiv 1\mod 4$, $(a,b)\in V(\FF_q)$, and $\lambda=b^2/a^2$. Then
    \begin{enumerate}
        \item $(a,b)$ has a parent if and only if $E_\lambda[4]\subseteq E_\lambda(\FF_q)$.
        \item If $(a,b)$ does not have a parent, then $(0,0)\in 4E_\lambda(\FF_q)$ if and only if $(a,b)$ has a child.
    \end{enumerate}
\end{lemma}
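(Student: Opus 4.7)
The plan is to translate both conditions into statements about whether certain elements of $\FF_q$ are squares, then apply the $2$-descent criterion (Lemma \ref{lem:2descent}) systematically. I will use throughout that $q\equiv 1\pmod 4$ gives $i\in \FF_q$, and that $\lambda=b^2/a^2$ is automatically a square.

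For (1), I would first invoke Corollary \ref{cor:structureofpreim} to rewrite the ``has a parent'' condition as $a^2-b^2\in \FF_q^{\times 2}$, equivalently $1-\lambda\in \FF_q^{\times 2}$. For the $4$-torsion direction, the key observation is that $E_\lambda[4]\subseteq E_\lambda(\FF_q)$ holds iff each of the three nontrivial $2$-torsion points $(0,0),(1,0),(\lambda,0)$ lies in $2E_\lambda(\FF_q)$: a single $\FF_q$-rational preimage under doubling forces the other three via translation by $E_\lambda[2]\subseteq E_\lambda(\FF_q)$. Applying $2$-descent to each yields: $(0,0)$ requires $-1$ and $-\lambda$ to be squares (automatic); $(1,0)$ requires $1-\lambda$ to be a square; and $(\lambda,0)$ requires $\lambda-1$ to be a square, which is equivalent to $1-\lambda\in \FF_q^{\times 2}$ since $-1\in \FF_q^{\times 2}$. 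All three thus collapse to the single condition $1-\lambda\in\FF_q^{\times 2}$, matching the parent condition.

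For (2), assume $(a,b)$ has no parent, so $1-\lambda\notin \FF_q^{\times 2}$; I need to show $(0,0)\in 4E_\lambda(\FF_q)$ iff $ab\in \FF_q^{\times 2}$ (the child condition from Definition \ref{def:AGM}). By Lemma \ref{lem:order4points}, the preimages of $(0,0)$ under doubling have $x$-coordinates $\pm b/a$; a direct computation from $y^2=x(x-1)(x-\lambda)$ gives $y^2=-(b(b\mp a)/a^2)^2$, so both preimages $P_\pm$ are $\FF_q$-rational precisely because $i\in \FF_q$. Then $(0,0)\in 4E_\lambda(\FF_q)$ iff at least one of $P_\pm$ lies in $2E_\lambda(\FF_q)$. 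Applying $2$-descent to $P_+$, the three required squares are $b/a$, $(b-a)/a$, and $b(a-b)/a^2$; since the last is the product mod $\FF_q^{\times 2}$ of the first two, the condition reduces to $ab,a(b-a)\in \FF_q^{\times 2}$. The analogous computation for $P_-$ yields $ab,a(a+b)\in \FF_q^{\times 2}$.

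The final step is a short squareness calculation: the product $a(b-a)\cdot a(a+b)=a^2(b^2-a^2)\equiv a^2-b^2 \pmod{\FF_q^{\times 2}}$, using $-1,a^2\in \FF_q^{\times 2}$. Since $a^2-b^2\notin \FF_q^{\times 2}$ by hypothesis, this product is a non-square, so exactly one of $a(b-a)$ and $a(a+b)$ is a square. Consequently, the equivalence $(0,0)\in 4E_\lambda(\FF_q)\iff ab\in \FF_q^{\times 2}$ follows in both directions. The principal obstacle, and the only place that really requires care, is the $2$-descent bookkeeping: one must track the three descent quantities modulo $\FF_q^{\times 2}$ and verify $\FF_q$-rationality of the $y$-coordinates of $P_\pm$, both of which lean on $i\in \FF_q$.
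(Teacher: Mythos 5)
Your proof is correct and follows essentially the same approach as the paper: reduce both statements to squareness conditions via Corollary \ref{cor:structureofpreim} and Lemma \ref{lem:2descent}, then in part (2) apply $2$-descent to the two halving points of $(0,0)$ with $x$-coordinates $\pm b/a$ and use the non-squareness of $a^2-b^2$ to show exactly one of the two auxiliary quantities is a square. Your explicit verification that the $y$-coordinates of $P_\pm$ lie in $\FF_q$ (via $i\in\FF_q$) is a detail the paper leaves implicit but is worth including.
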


\begin{proof}
    (1) By Lemma \ref{lem:2torsingeneral}, we have that $\nu_2(|E_\lambda(\FF_q)|)\geq 3$ because $(0,0)\in 2E_\lambda(\FF_q)$. Therefore in the first case it suffices to determine when we also have $(\lambda,0)\in 2E_\lambda(\FF_q)$. 

    We know that $a^2-b^2$ is a square if and only if $(a,b)$ has a parent by Corollary \ref{cor:structureofpreim}, so further $(a^2-b^2)/b^2=\lambda-1$ is a square if and only if $(a,b)$ has a parent. Considering the point $(x,y)=(\lambda,0)$, we see that
    \begin{equation*}
        x-0,\quad x-1,\quad x-\lambda=\lambda,\quad \lambda-1,\quad 0,
    \end{equation*}
    are all squares if and only if $(a,b)$ has a parent, so by Lemma \ref{lem:2descent} $(\lambda,0)\in 2E_\lambda(\FF_q)$ if and only if $(a,b)$ has a parent. This implies $E_\lambda [4]\subseteq E_\lambda (\FF_q)$ if and only if $(a,b)$ has a parent.

    (2) Now suppose $(a,b)$ does not have a parent. Let $P$ be a point such that $2P=(0,0)$, so by Lemma \ref{lem:order4points}, we have that the $x$ coordinate of $P$ is $\pm \sqrt{\lambda}$. Applying $2$-descent to $P=(b/a,y_0)$ and $P'=(-b/a,y_0')$, we have
    \begin{align*}
        \frac ba,\frac ba-1,\frac ba-\lambda&\equiv ab,\frac ba-1,ab\left(1-\frac ba\right) \mod \FF_q^{\times2}, \\
        -\frac ba,-\frac ba-1,\frac ba-\lambda&\equiv ab,-\frac ba-1,ab\left(-1-\frac ba\right) \mod \FF_q^{\times2}.
    \end{align*}
    If $(a,b)$ does not have a child, then $ab$ and $-ab$ are not squares, completing one direction. Now assume $(a,b)$ has a child, then $ab$ and $-ab$ are squares. We compute,
    \begin{equation*}
        \left(\frac ba-1\right)\left(-\frac ba-1\right)=1-\lambda\equiv a^2-b^2 \mod \FF_q^{\times 2}.
    \end{equation*}
    Since $(a,b)$ does not have a parent by assumption, we know that $a^2-b^2$ is not square, so exactly one of $b/a-1$ and $-b/a-1$ is square. Suppose WLOG that $b/a-1$ is square. Then since $-ab$ is square, $ab(1-b/a)$ is also square, so $P\in 2E(\FF_q)$ by $2$-descent and $\ZZ/8\ZZ\oplus \ZZ/2\ZZ\subseteq E(\FF_q)$. This then implies that $\nu_2(|E(\FF_q)|)\geq 4$, completing the proof. 
\end{proof}

\begin{lemma}\label{lem:fishdisjoint}
    Let $q\equiv 5\mod 8$. Then if $(a,b)$ is in a fish (see Definition \ref{def:fish}) and $\lambda=\lambda(a,b)$, then $\nu_2(|E_\lambda(\FF_q)|)=4$. 
\end{lemma}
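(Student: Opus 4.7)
My plan is to pivot from $E_\lambda$ to the $2$-isogenous curve $E_{\lambda_1}$, where $\lambda_1=\lambda(a_1,b_1)$ for the common child $(a_1,b_1)=((a+b)/2,\sqrt{ab})$ of the fish. Because the isogeny $\varphi_{a,b}$ from Lemma \ref{lem:theisog} forces $|E_\lambda(\FF_q)|=|E_{\lambda_1}(\FF_q)|$, we have $\nu_2(|E_\lambda(\FF_q)|)=\nu_2(|E_{\lambda_1}(\FF_q)|)$, and the second is easier to analyze: since the child has a parent, Lemma \ref{lem:planktondisjoint}(1) gives $E_{\lambda_1}[4]\subseteq E_{\lambda_1}(\FF_q)$, so the $2$-Sylow subgroup already contains $(\ZZ/4\ZZ)^2$ and $\nu_2\geq 4$ is immediate.

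To unpack the remaining fish hypotheses, I set $\alpha=b/a$ and pick $\beta\in \FF_q$ with $\beta^2=\alpha$ (possible since $ab=a^2\alpha\in\FF_q^{\times 2}$). The three fish constraints --- that $(a,b)$ has no parent, that it has children, and that the children have no children --- translate via Corollary \ref{cor:structureofpreim} into
\begin{equation*}
    1-\alpha^2\notin\FF_q^{\times 2}\qquad\text{and}\qquad 2\beta(1+\alpha)\notin\FF_q^{\times 2}.
\end{equation*}
For the upper bound $\nu_2\leq 4$, it suffices to rule out any $4$-torsion point of $E_{\lambda_1}$ lying in $2E_{\lambda_1}(\FF_q)$, as its existence would produce a point of order $8$ and force $\nu_2\geq 5$. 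I will apply $2$-descent (Lemma \ref{lem:2descent}) to each of the six $4$-torsion $x$-coordinates from Lemma \ref{lem:order4points}. The identities $\lambda_1=4\beta^2/(1+\beta^2)^2$ and $\sqrt{1-\lambda_1}=(1-\alpha)/(1+\alpha)$, together with the factorization $4\alpha\pm 2i\beta(1-\alpha)=\mp 2i\beta(\beta\pm i)^2$ (which uses $i\in\FF_q$ since $q\equiv 1\pmod 4$), convert the three descent quantities $x_0,x_0-1,x_0-\lambda_1$ into simple expressions in $2\beta$, $1\pm\alpha$, and $i$ modulo $\FF_q^{\times 2}$.

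With the square classes in hand, the case analysis is short. The triple coming from the two points above $(1,0)$ always contains $1-\alpha^2$ as one of its factors, which is non-square, so this case is ruled out by the first fish relation. The triple coming from the two points above $(0,0)$ reduces to requiring both $2\beta$ and $1+\alpha$ to be squares, directly contradicting $2\beta(1+\alpha)\notin\FF_q^{\times 2}$. The triple coming from the two points above $(\lambda_1,0)$ reduces to requiring both $2i\beta$ and $1-\alpha$ to be squares; here I invoke $q\equiv 5\pmod 8$, which makes $i\notin\FF_q^{\times 2}$, so $2\beta$ and $2i\beta$ lie in opposite square classes, and the two fish relations force exactly one of $1\pm\alpha$ to be a square and exactly one of $\{2\beta,1+\alpha\}$ to be a square; a two-line split on which of $1+\alpha,1-\alpha$ is square then forbids $\{2i\beta,1-\alpha\}\subseteq\FF_q^{\times 2}$ in either branch. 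The main obstacle I anticipate is simply the bookkeeping of square classes in this third case; past that, everything is driven by the single factorization identity together with the congruence $q\equiv 5\pmod 8$.
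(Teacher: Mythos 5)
Your proposal is correct and follows essentially the same route as the paper: both reduce to the Legendre curve at the bottom of the fish, obtain $\nu_2\geq 4$ from Lemma \ref{lem:planktondisjoint}(1), and then use $2$-descent to rule out each of the three square classes of $4$-torsion $x$-coordinates, with $q\equiv 5\pmod 8$ (so $i\notin\FF_q^{\times 2}$) entering only in the case above $(\lambda_1,0)$. The only difference is bookkeeping: you carry out the descent explicitly via the substitution $\beta^2=\alpha$ and a factorization identity, whereas the paper keeps $\lambda$ abstract and cites Lemma \ref{lem:sqrt1minlambda} and the computation in Lemma \ref{lem:planktondisjoint}(2); your two-way case split in the last step can also be avoided by observing directly that $2i\beta(1-\alpha)\equiv 2\beta(1+\alpha)\cdot i(1-\alpha^2)$ is a non-square times a square.
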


\begin{proof}
    Note that since $(a,b)$ is in a fish, $E_\lambda$ is isogenous to a elliptic curve coming from a pair $(a',b')$ with a parent but no children, so we may assume without loss of generality that $(a,b)$ has a parent but no children.

    As a consequence of Lemma \ref{lem:planktondisjoint}(1), we see that $E_\lambda[4]\subseteq E_\lambda(\FF_q)$. Therefore, to show that $\nu_2(|E(\FF_q)|)=4$, it suffices to show that no point of exact order $4$ is in $2E(\FF_q)$. Moreover, if $P,P'$ are points of exact order $4$ with $2P=2P'$, then $P-P'\in E[2]\subseteq 2E(\FF_q)$ so in this case $P\in 2E(\FF_q)$ if and only if $P'\in 2E(\FF_q)$. This means that instead of checking all $6$ $x$-coordinates of points of exact order $4$, we only need to check $3$ of them. Using Lemma \ref{lem:order4points} it suffices to show
    \begin{equation*}
        P_1=(\sqrt{\lambda},y_1) \qquad P_2=(1+\sqrt{1-\lambda},y_2) \qquad P_3=(\lambda+\sqrt{\lambda^2-\lambda},y_3)
    \end{equation*}
    are not in $2E(\FF_q)$.

    We first have that $P_1\not \in 2E(\FF_q)$ by Lemma \ref{lem:planktondisjoint}(2) since $2P_1=(0,0)$. For $P_2$, we consider $2$-descent applied to $P_2$:
    \begin{equation*}
        P_2:x,\quad x-1,\quad x-\lambda=1+\sqrt{1-\lambda}, \quad \sqrt{1-\lambda}, \quad 1-\lambda+\sqrt{1-\lambda}.
    \end{equation*}
    Since $(a,b)$ is the bottom of a fish, it has a parent. By Lemma \ref{lem:sqrt1minlambda} (1), this means there is a pair $(c,d)$ with $\lambda(c,d)=1-\lambda(a,b)$. But since $(a,b)$ has no grandparent, we get by Lemma \ref{lem:sqrt1minlambda} (2) that $(c,d)$ does not have a child, so $\sqrt{1-\lambda}=d/c$ is not square, and $P_2\not\in 2E(\FF_q)$.

    For $P_3$, we once again consider $2$-descent of $(\lambda+\sqrt{\lambda^2-\lambda},y_3)$:
    \begin{equation*}
        P_3:x,\quad x-1\quad x-\lambda =\lambda+\sqrt{\lambda^2-\lambda}, \quad \lambda-1+\sqrt{\lambda^2-\lambda}, \quad \sqrt{\lambda^2-\lambda}.
    \end{equation*}
    Consider $\sqrt{\lambda^2-\lambda}$, we see that
    \begin{equation*}
        \sqrt{\lambda^2-\lambda}\equiv \pm \sqrt\lambda\sqrt{\lambda-1}\equiv \pm i \sqrt{\lambda}\sqrt{1-\lambda} \mod \FF_q^{\times 2}.
    \end{equation*}
    By the cases of $P_1$ and $P_2$, we know that $\sqrt{\lambda}$ and $\sqrt{1-\lambda}$ are not squares, so since $i$ is not a square in $\FF_q$ (recall that $q\equiv 5\mod 8$) we get that $\sqrt{\lambda^2-\lambda}$ is not square. By $2$-descent, we obtain that $P_3\not\in 2E(\FF_q)$.
\end{proof}

These lemmas suffice to handle the $q\equiv 5\mod 8$ case of Theorem \ref{countingcomponents}, for the $q\equiv 1\mod 8$ case, we will need an additional lemma.

\begin{lemma}\label{lem:inhead}
    Let $E_\lambda$ be an elliptic curve in Legendre normal form over $\mathbb{F}_q$ with complex multiplication by an order $\mathcal{O}$ with odd index in the maximal order $\mathcal{O}_K$, and let $\left( \frac{{\rm disc}(K)}{2}\right) = 1$. Further, assume $E_\lambda(\mathbb{F}_q)[4] \cong \Z/4\Z \oplus \Z/4\Z$. Then there is a point $P$ in $V(\mathbb{F}_q)$ such that $E_{\lambda(P)} \cong E_\lambda$, and $P$ is part of a cycle in $A(\FF_q)$. 
\end{lemma}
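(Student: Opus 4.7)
The strategy is to realize the $[\mathfrak{p}_2]$-orbit of $[E_\lambda]$ in $\Ell_{\mathcal{O}}(\FF_q)$ as an explicit cycle of pairs in $V(\FF_q)$, leveraging the full $4$-torsion hypothesis for rationality. By Lemma~\ref{lem:cmtheory}, since $\mathcal{O}$ is maximal at $2$ and $2$ splits in $K$ as $\mathfrak{p}_2\bar{\mathfrak{p}}_2$, the $[\mathfrak{p}_2]$-action partitions $\Ell_{\mathcal{O}}(\FF_q)$ into cycles; I would write the orbit of $[E_\lambda]$ as $[E_{\lambda_0}] \to [E_{\lambda_1}] \to \cdots \to [E_{\lambda_{h-1}}] \to [E_{\lambda_0}]$ where $h = h_2(\mathcal{O})$. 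All curves in the orbit share the same Frobenius $\pi \in \mathcal{O}$, and since the condition $\pi \equiv 1 \pmod{4\mathcal{O}}$ (equivalent to full $4$-torsion over $\FF_q$) depends only on $\pi$ and $\mathcal{O}$, every $E_{\lambda_i}$ inherits full $4$-torsion.

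For each index $i$, I would apply Lemma~\ref{lem:rivalscorresponding2tors} to select the representative $\mu_i \in \Lambda_{\lambda_i}$ for which the AGM-outgoing kernel $\langle (0,0) \rangle \subseteq E_{\mu_i}$ corresponds, under the canonical isomorphism $E_{\lambda_i} \cong E_{\mu_i}$, to the $\mathfrak{p}_2$-kernel of $E_{\lambda_i}$. Full $4$-torsion together with Lemma~\ref{lem:planktondisjoint}(1) guarantees that every $\mu_i$ is a square in $\FF_q$, so pairs $(a_i, b_i) \in V(\FF_q)$ with $\lambda(a_i, b_i) = \mu_i$ exist. The AGM edge $(a_i, b_i) \mapsto (a_{i+1}, b_{i+1})$ then lives in $V(\FF_q)$ precisely when $a_i b_i$ is a square in $\FF_q$, equivalently $\mu_i \in \FF_q^{\times 4}$. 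I expect to establish this fourth-power condition via a $2$-descent argument (Lemma~\ref{lem:2descent}) applied to a $4$-torsion preimage of the $\mathfrak{p}_2$-kernel, with the horizontal crater structure of the $2$-volcano and full $4$-torsion forcing the required rationality of $\sqrt{\mu_i}$ as a square.

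Once the $h$-step AGM chain $(a_0, b_0) \mapsto \cdots \mapsto (a_h, b_h)$ is shown to lie entirely in $V(\FF_q)$, the curve $E_{\lambda(a_h, b_h)}$ shares a $j$-invariant with $E_\lambda$, so Lemma~\ref{lem:norivals} forces $(a_h, b_h) = \gamma \cdot (a_0, b_0)$ for some $\gamma \in \FF_q^\times$. Since AGM commutes with scalar multiplication and $\gamma$ has finite order, iterating for $\mathrm{ord}(\gamma) \cdot h$ total steps returns to $(a_0, b_0)$ and produces a genuine cycle in $A(\FF_q)$ containing $P = (a_0, b_0)$, as required. The main obstacle is the fourth-power condition in the middle step: it is not immediate from the hypotheses alone, and the proof will likely hinge on combining the Legendre-form $2$-descent computations of Lemma~\ref{lem:planktondisjoint} with the specific $4$-torsion preimage structure of horizontal $\mathfrak{p}_2$-isogenies to rule out the bad case $\mu_i \in \FF_q^{\times 2} \setminus \FF_q^{\times 4}$.
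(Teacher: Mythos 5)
Your proposal takes a genuinely different route from the paper, but it has a gap at precisely the step you flag as ``the main obstacle,'' and that gap is the whole content of the lemma. You aim to build the cycle \emph{forward} over $\FF_q$ step by step, and you correctly identify that the edge out of $(a_i,b_i)$ exists iff $a_ib_i\in\FF_q^{\times 2}$, equivalently $\mu_i\in\FF_q^{\times 4}$ (for $q\equiv 1\bmod 4$). But the full-$4$-torsion hypothesis only gives $\mu_i\in\FF_q^{\times 2}$ (via $2$-descent on the $2$-torsion points), not $\mu_i\in\FF_q^{\times 4}$, and you do not supply an argument that upgrades square to fourth power. The paper's Lemma~\ref{lem:planktondisjoint}(2), which \emph{does} relate children to $(0,0)\in 4E_\lambda(\FF_q)$, applies only when $(a,b)$ has no parent --- which is the opposite of your situation --- so that tool is unavailable, and it is not apparent how a $2$-descent on $4$-torsion preimages of $\langle(0,0)\rangle$ would close the gap.

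The paper avoids this entirely by going \emph{backward}: it first invokes Lemma~\ref{lem:eventualhead} to produce a jellyfish head $H$ containing $P$ over some extension $\FF_{q^m}$, and then descends $H$ to $\FF_q$ using Lemma~\ref{lem:planktondisjoint}(1), which says $(a,b)$ has a parent over $\FF_q$ iff $E_{\lambda(a,b)}[4]\subseteq E_{\lambda(a,b)}(\FF_q)$. Since the predecessor of each node in $H$ is exactly one of its parents, and full $4$-torsion propagates around $H$ because all curves in the orbit have the same CM order and Frobenius (hence isomorphic $\FF_q$-point groups, via \cite[Thm.~10]{kohel1996endomorphism}), every edge of $H$ lies over $\FF_q$. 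Note that ``$(a,b)$ has a parent in $V(\FF_q)$'' is exactly the statement that the \emph{incoming} edge of $(a,b)$ is $\FF_q$-rational, which is the same criterion as your ``child exists'' condition but expressed from the successor's side, where the paper has a ready-made lemma. So the two approaches are testing the same edges, but the paper has the right tool (parent criterion tied to $4$-torsion) and you are missing the corresponding tool for the child criterion (fourth-power test). You also need the lifting step from Lemma~\ref{lem:eventualhead} to know the cycle exists at all; your direct construction of the $[\mathfrak{p}_2]$-orbit would serve the same purpose if you could verify the fourth-power condition, but as written the key step is unproven.

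Your observation that full $4$-torsion is an isogeny-class invariant (depending only on $\pi$ and $\mathcal{O}$) is correct and is essentially the same group-isomorphism fact the paper uses; your use of Lemma~\ref{lem:rivalscorresponding2tors} to pick the representative $\mu_i\in\Lambda_{\lambda_i}$ matching the $\mathfrak{p}_2$-kernel is also sound. The issue is solely the unestablished fourth-power condition.
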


\begin{proof}
    By Lemma \ref{lem:eventualhead}, there is a point $P \in V(\mathbb{F}_q) \subseteq  V(\mathbb{F}_{q^m})$ such that $P$ is part of a jellyfish head $H$ in $A(\mathbb{F}_{q^m})$, for some $m \geq 1$. We will show that all of the points in $H$ are also in $V(\mathbb{F}_{q})$, so that $P$ is part of a cycle in $A(\mathbb{F}_{q})$.  

    By Lemma \ref{lem:planktondisjoint}(1), $E_{\lambda(P)}(\mathbb{F}_q)[4] \cong \Z/4\Z \oplus \Z/4\Z$ implies that the parents of $P$ exist over $V(\mathbb{F}_{q})$. One of these must be in $H$. Call it $P_0$. Because $E_{\lambda(P_0)}$ has a horizontal isogeny to $E_{\lambda(P)}$, it must have CM by the same order, and so $E_{\lambda(P)}(\FF_q)$ and $E_{\lambda(P_0)}(\FF_q)$ are isomorphic as abelian groups by \cite[Thm. 10]{kohel1996endomorphism} and $E_{\lambda(P_0)}(\mathbb{F}_q)[4] \cong \Z/4\Z \oplus \Z/4\Z$. Then the parents of $P_0$ are \textit{also} in $V(\mathbb{F}_{q})$, and the point that precedes $P_0$ in the cycle $H$ must also be in $V(\mathbb{F}_{q})$. Proceeding this way, we see that every point in the cycle $H$ is in $V(\mathbb{F}_{q})$, and $P$ is part of a jellyfish head over $A(\mathbb{F}_{q})$.  
\end{proof}

We these lemmas established, we will can prove the main theorem of this section. What we will do is produce enough nonisogeneous elliptic curves that are guaranteed to be in a jellyfish by Lemmas \ref{lem:planktondisjoint}, \ref{lem:fishdisjoint}, and \ref{lem:inhead}. Since all elliptic curves in the same jellyfish are isogenous, this will lower bound the number of jellyfish.

\begin{proof}[Proof of Theorem \ref{countingcomponents}]
    We begin with the case of $q\equiv 5\mod 8$. By Lemmas \ref{lem:planktondisjoint} and \ref{lem:fishdisjoint}, we know that the isogeny classes associated to an $|E(\FF_q)|$ with $\nu_2(|E(\FF_q)|)>4$ only appear in connected components associated to jellyfish. There are asymptotically $\sqrt{q}(p-1)/(8p)$ such isogeny classes of elliptic curves over $\FF_q$ by \cite[Thm. 4.2]{ShoofCurveCounting}. Theorem \ref{prop:hurwitzclassnum}(1) tells us that all such isogeny classes appear at least once in $A(\FF_q)$, so there are at least $((p-1)/(8p)-\varepsilon)\sqrt{q}$ jellyfish asymptotically because elliptic curves in the same jellyfish are isogenous.
    
    Now we consider $q\equiv 1\mod 8$. Since elliptic curves in the same jellyfish are isogenous, it suffices to bound the number of isogeny classes of elliptic curves that are contained in jellyfish. In particular, we will determine integers $s\in [-2\sqrt q,2\sqrt q]$ such that there exists an elliptic curve $E/\FF_q$ with trace of Frobenius $s$ and such that there is a pair $(a,b)$ in a head with $E_{\lambda(a,b)}\cong_{\FF_q} E$.

    Let $\sqrt{q}$ be a $2$-adic square root of $q$, where if $q\equiv 1\mod 16$ we pick $\sqrt{q}\equiv 1\mod 8$ and if $q\equiv 9\mod 16$ we pick $\sqrt{q}\equiv 3\mod 8$. Let $s\equiv 46\sqrt{q}\mod 128$ and $p\nmid s$. We verify the following facts:
    \begin{enumerate}
        \item $s\equiv q+1\mod 16$,
        \item $16\mid s^2-4q$,

    \end{enumerate}
    by direct computation. By \cite[Thm. 4.2]{waterhouse1969abelian}, there exist $E/\FF_q$ with CM by $\mathcal{O}_{\QQ(\pi)}$ and trace of Frobenius $s$. Then by \cite[Thm. 3.7]{ShoofCurveCounting}, since $4^2\mid s^2-4q$, $4\mid q-1$, and $16\mid q+1-s$, we obtain $E[4]\subseteq E(\FF_q)$. Finally, by \ref{lem:hurwitzclassnum4tors}(1), $E\cong E_{\lambda}$ over $\FF_q$ for some $\lambda\in\FF_q^{\times 2}$. By Lemma \ref{lem:2splitsinendomorphismring}, we have that $\left(\frac{\disc(\QQ(\pi))}2\right)=1$. Finally, by Lemma \ref{lem:inhead} this implies there is an elliptic curve isomorphic to $E\cong E_\lambda$ in a head.

    The proportion of $s\in \ZZ$ that satisfy $s\equiv 46\sqrt{q}\mod 128$ and $p\nmid s$ is $(p-1)/(128p)$, so there are approximately $4(p-1)/(128p)\sqrt{q}$ such $s$ in $[-2\sqrt{q},2\sqrt{q}]$ by \cite[Thm. 4.2]{ShoofCurveCounting}, so for large $q$ we obtain
    \begin{equation*}
        d(\FF_q)>\left(\frac{p-1}{32p}-\varepsilon\right)\sqrt{q}, 
    \end{equation*}
    as desired.
\end{proof}

\section{Conclusions} \label{sec:conclusion}
In this paper, we explored the AGM over finite fields, generalizing several prior results in the $q \equiv 3 \pmod 4$ case to arbitrary finite fields and establishing new results on the type and structure of components, including a complete classification of components.  

This extends prior results on the AGM over finite fields, as well as more broadly results about isogenies of elliptic curves defined by the AGM. It also suggests an algorithm for fast computation of 2-isogenies between elliptic curves in Legendre normal form, via computation of the corresponding arithmetic-geometric mean function. 
Further work might explore the AGM over the $p$-adic integers.

\section*{Acknowledgements}
This research was supported by NSA MSP grant H98230-24-1-0033. The authors would like to thank Dr. Wei-Lun Tsai for his wonderful talk on \cite{GOSTJellyfish} at Clemson University which motivated this work, and his helpful conversations throughout our work on this paper.

\bibliographystyle{plain}

\bibliography{ref}

\end{document}